\chardef\cprime"7E 
\def\label#1{\label{#1}}
\definecolor{labelkey}{rgb}{1,0,0}
\setlist[enumerate,1]{label=\roman*\textup{)},
style=nextline, leftmargin=7mm,
topsep=0mm,
itemsep=0.4mm, 
align=parleft
}
\numberwithin{equation}{subsection}
\theoremstyle{definition}
\newtheorem{Defi}[equation]{Definition} \newcommand{\defi}{\begin{Defi}} \newcommand{\xdefi}{\end{Defi}} \newcommand{\refde}[1]{Definition~\ref{defi--#1}}
\newtheorem{DefiLemm}[equation]{Definition and Lemma} \newcommand{\defilemm}{\begin{DefiLemm}} \newcommand{\xdefilemm}{\end{DefiLemm}} \newcommand{\refdele}[1]{Definition and Lemma~\ref{defilemm--#1}}
\newtheorem{Bsp}[equation]{Example} \newcommand{\exam}{\begin{Bsp}} \newcommand{\xexam}{\end{Bsp}} \newcommand{\refex}[1]{Example~\ref{exam--#1}}
\newtheorem{Syno}[equation]{Synopsis} \newcommand{\syno}{\begin{Syno}} \newcommand{\xsyno}{\end{Syno}} \newcommand{\refsy}[1]{Synopsis~\ref{syno--#1}}
\newtheorem{Wish}[equation]{Wishful thinking} \newcommand{\wish}{\begin{Wish}} \newcommand{\xwish}{\end{Wish}} 
\newtheorem{Ques}[equation]{Question} \newcommand{\ques}{\begin{Ques}} \newcommand{\xques}{\end{Ques}} 
\newtheorem{Bem}[equation]{Remark} \newcommand{\rema}{\begin{Bem}} \newcommand{\xrema}{\end{Bem}} \newcommand{\refre}[1]{Remark~\ref{rema--#1}}
\theoremstyle{plain}
\newtheorem*{Theox}{Theorem} \newcommand{\theox}{\begin{Theox}} \newcommand{\xtheox}{\end{Theox}}
\newtheorem{Theo}[equation]{Theorem} \newcommand{\theo}{\begin{Theo}} \newcommand{\xtheo}{\end{Theo}} \newcommand{\refth}[1]{Theorem~\ref{theo--#1}}
\newtheorem{Satz}[equation]{Proposition} \newcommand{\prop}{\begin{Satz}} \newcommand{\xprop}{\end{Satz}} \newcommand{\refpr}[1]{Proposition~\ref{prop--#1}}
\newtheorem{Lemm}[equation]{Lemma} \newcommand{\lemm}{\begin{Lemm}} \newcommand{\xlemm}{\end{Lemm}} \newcommand{\refle}[1]{Lemma~\ref{lemm--#1}}
\newtheorem{Coro}[equation]{Corollary} \newcommand{\coro}{\begin{Coro}} \newcommand{\xcoro}{\end{Coro}} \newcommand{\refco}[1]{Corollary~\ref{coro--#1}}
\newtheorem{Nota}[equation]{Notation} \newcommand{\nota}{\begin{Nota}} \newcommand{\xnota}{\end{Nota}} \newcommand{\refno}[1]{Notation~\ref{nota--#1}}
\newcommand{\refsect}[1]{§\ref{sect--#1}}
\newcommand{\refit}[1]{\ref{item--#1}}
\newcommand{\refeq}[1]{(\ref{eqn--#1})}
\newcommand{\eqn}{\begin{equation}} \newcommand{\xeqn}{\end{equation}}
\newcommand{\eqnarr}{\begin{eqnarray*}} \newcommand{\xeqnarr}{\end{eqnarray*}}
\newcommand{\eqnarra}{\begin{eqnarray}} \newcommand{\xeqnarra}{\end{eqnarray}}
\newcommand{\pf}{\begin{proof}} \newcommand{\xpf}{\end{proof}}
\newcommand{\nc}{\newcommand}
\nc{\StP}[1]{\cite[\href{http://stacks.math.columbia.edu/tag/#1}{Tag #1}]{StacksProject}} 
\nc{\on}{\operatorname}
\nc{\aff}{{\on{aff}}}
\nc{\modi}{{\on{mod}}} 
\nc{\even}{{\on{even}}}
\nc{\odd}{{\on{odd}}}
\nc{\naive}{{\on{naive}}}
\nc{\hofib}{\on{hofib}}
\nc{\Bun}{\on{Bun}}
\nc{\ad}{{\on{ad}}}
\nc{\lft}{{\on{lft}}}
\nc{\str}{\on{-}}
\nc{\perf}{{\on{perf}}}
\nc{\Rel}{{\on{Pos}}}
\nc{\lan}{\langle}
\nc{\ran}{\rangle}
\nc{\bbA}{{\A}} 
\nc{\bbB}{{\mathbb B}}
\nc{\bbC}{{\mathbb C}}
\nc{\bbD}{{\mathbb D}}
\nc{\bbE}{{\mathbb E}}
\nc{\bbG}{{\mathbf G}}
\nc{\bbH}{{\mathbb H}}
\nc{\bbI}{{\mathbb I}}
\nc{\bbJ}{{\mathbb J}}
\nc{\bbK}{{\mathbb K}}
\nc{\bbL}{{\mathbb L}}
\nc{\bbM}{{\mathbb M}}
\nc{\bbN}{{\N}} 
\nc{\bbO}{{\mathbb O}}
\nc{\bbP}{{\P}} 
\nc{\bbQ}{{\Q}} 
\nc{\bbR}{{\mathbb R}}
\nc{\bbS}{{\mathbb S}}
\nc{\bbT}{{\mathbb T}}
\nc{\bbU}{{\mathbb U}}
\nc{\bbV}{{\mathbb V}}
\nc{\bbW}{{\mathbb W}}
\nc{\bbX}{{\mathbb X}}
\nc{\bbY}{{\mathbb Y}}
\nc{\bbZ}{{\bf Z}}
\nc{\calA}{{\mathcal A}}
\nc{\calB}{{\mathcal B}}
\nc{\calC}{{\mathcal C}}
\nc{\calD}{{\mathcal D}}
\nc{\calE}{{\mathcal E}}
\nc{\calF}{{\mathcal F}}
\nc{\calG}{{\mathcal G}}
\nc{\calH}{{\mathcal H}}
\nc{\calI}{{\mathcal I}}
\nc{\calJ}{{\mathcal J}}
\nc{\calK}{{\mathcal K}}
\nc{\calL}{{\mathcal L}}
\nc{\calM}{{\mathcal M}}
\nc{\calN}{{\mathcal N}}
\nc{\calO}{{\mathcal O}}
\nc{\calP}{{\mathcal P}}
\nc{\calQ}{{\mathcal Q}}
\nc{\calR}{{\mathcal R}}
\nc{\calS}{{\mathcal S}}
\nc{\calT}{{\mathcal T}}
\nc{\calU}{{\mathcal U}}
\nc{\calV}{{\mathcal V}}
\nc{\calW}{{\mathcal W}}
\nc{\calX}{{\mathcal X}}
\nc{\calY}{{\mathcal Y}}
\nc{\calZ}{{\mathcal Z}}
\nc{\Sht}{{\on{Sht}}}
\nc{\Frob}{{\on{Frob}}}
\nc{\Hecke}{{\on{Hecke}}}
\nc{\inv}{{\on{inv}}}
\nc{\Conv}{{\on{Conv}}}
\nc{\triv}{{\on{triv}}}
\nc{\Isom}{{\on{Isom}}}
\nc{\scrB}{{\mathscr{B}}}
\nc{\scrA}{{\mathscr{A}}}
\nc{\bbf}{{\mathbf{f}}}
\nc{\bba}{{\mathbf{a}}}
\nc{\al}{\alpha}
\nc{\be}{\beta}
\nc{\ga}{\gamma}
\nc{\la}{\lambda}
\nc{\qcqs}{{\on{qcqs}}}
\nc{\pot}[1]{ [\hspace{-0,5mm}[ {#1} ]\hspace{-0,5mm}] }
\nc{\rpot}[1]{ (\hspace{-0,7mm}( {#1} )\hspace{-0,7mm}) }
\nc{\defined}{\hspace{0.1cm}\stackrel{\text{\tiny \rm def}}{=}\hspace{0.1cm}}
\nc{\co}{\colon}
\newcommand{\category}[1]{\mathrm{#1}}
\newcommand{\DGCat}{\category{DGCat}} 
\newcommand{\cont}{\category{cont}} 
\newcommand{\Cat}{\category{Cat}} 
\newcommand{\simpl}{\category s} 
\newcommand{\res}{\mathrm{res}} 
\newcommand{\Rings}{\category{Rings}} 
\newcommand{\Shv}{\category{Shv}} 
\newcommand{\Fun}{\category{Fun}} 
\newcommand{\AffSch}{\category{AffSch}} 
\newcommand{\PreStk}{\category{PreStk}} 
\newcommand{\Stk}{\category{Stk}} 
\newcommand{\Gpd}{\category{Gpd}} 
\newcommand{\Grps}{\category{Grps}} 
\newcommand{\Zar}{\mathrm{Zar}} 
\newcommand{\Du}{\mathrm{D}} 
\newcommand{\sPSh}{\simpl\category{PSh}} 
\newcommand{\Sm}{\category{Sm}} 
\newcommand{\Corr}{\category{Corr}} 
\newcommand{\Sch}{\category{Sch}} 
\newcommand{\IndSch}{\category{IndSch}} 
\newcommand{\IndArt}{\category{IndArt}} 
\newcommand{\IndAlgSp}{\category{IndAlgSp}} 
\newcommand{\Sets}{\category{Sets}} 
\newcommand{\Mod}{\category{Mod}} 
\newcommand{\BarC}{{\rm Bar}} 
\def\wt{\mathrm {w}} 
\def\cl{\mathrm {cl}} 
\def\mot{\mathrm {m}} 
\def\clH{{}^\cl \H} 
\def\motH{{}^\mot \H} 
\def\Gm{\mathbf {G}_\mathrm m} 
\def\GL{\mathrm {GL}} 
\newcommand{\GmX}[  1]{\mathbf {G}_{\mathrm {m}, #1}} 
\def\IC{\mathrm{IC}} 
\def\red{\mathrm{red}} 
\def\pfp{\mathrm{pfp}} 
\def\ft{\mathrm{ft}} 
\font\tencyr=wncyr10
\font\sevencyr=wncyr7
\font\fivecyr=wncyr5
\def\HBei{{\rm H_{\fam15 B}}} 
\newcommand{\colim}{\operatornamewithlimits{colim}} 
\newcommand{\codim}{\operatornamewithlimits{codim}} 
\def\id{{\rm id}} 
\def\ev{{\rm ev}} 
\def\opp{{\rm op}} 
\newcommand{\op}[1]{#1^\circ} 
\def\To#1#2{\mathop{\count0=#1 \loop\ifnum\count0>0 \smash-\mkern-7mu \advance\count0 -1 \repeat \mathord\rightarrow}\limits^{#2}} 
\def\CH{\mathop{\rm CH}\nolimits} 
\def\Hom{\mathop{\rm Hom}\nolimits} 
\def\rk{\mathop{\rm rk}} 
\def\Ind{\category{Ind}} 
\def\Pro{\category{Pro}} 
\def\Gr{\mathop{\rm Gr}\nolimits} 
\def\Fl{\mathop{\rm Fl}\nolimits} 
\def\Sht{\mathop{\rm Sht}\nolimits} 
\def\Ext{\mathop{\rm Ext}\nolimits} 
\def\HdR{\mathop{\rm H_{dR}}\nolimits} 
\def\IHom{\underline{\Hom}} 
\def\Aut{\mathop{\rm Aut}\nolimits} 
\def\Rep{\category{Rep}} 
\def\bound{{\rm b}} 
\def\et{\mathrm{\acute et}} 
\def\fpqc{\mathrm{fpqc}} 
\def\ins{\mathrm{ins}} 
\def\incl{\mathrm{incl}} 
\definecolor{hellgrau}{RGB}{200,200,200} 
\definecolor{dunkelgrau}{RGB}{160,160,160} 
\definecolor{hellblau}{RGB}{194, 215, 249} %
\definecolor{dunkelblau}{RGB}{68, 128, 226} %
\def\Z{{\bf Z}} 
\def\bbF{{\bf F}}
\def\Fp{{\bf F}_p} %
\def\Fq{{\bf F}_q} %
\def\N{{\bf N}} 
\def\Q{{\bf Q}} 
\def\Ql{{\Q_\ell}} 
\def\Zl{{\Z_\ell}} 
\def\A{{\bf A}} 
\renewcommand{\P}[1][1]{\mathbf P^{#1}} 
\def\Gm{\mathbf {G}_\mathrm m} 
\def\H{{\rm H}} 
\def\h{{\rm h}} 
\def\im{{\rm im}} 
\def\SH{\category{SH}} %
\def\DM{\category{DM}} 
\def\DA{\category{DA}} 
\def\DTM{\category{DTM}} 
\def\Art{\category{Art}} 
\def\MTM{\category{MTM}} 
\def\ii{$\infty$}
\def\bound{{\rm b}} 
\def\Spec{\mathop{\rm Spec}} 
\newcommand{\M}{\mathrm{M}} 
\newcommand{\comp}{\mathrm{c}} 
\newcommand{\cstr}{\mathrm{cons}} 
\newcommand{\C}{\mathcal{C}} 
\newcommand{\D}{\category{D}} 
\newcommand{\cO}{\mathcal{O}}
\def\sbuildrel#1\over#2{\mathrel{\smash{\mathop{\kern0pt #2}\limits^{#1}}}}
\let\x\times
\let\ol\overline
\renewcommand{\t}{\otimes}
\renewcommand{\r}{\rightarrow}
\newcommand{\lr}{\longrightarrow}
\def\matrix#1{\null\,\vcenter{\normalbaselines
    \ialign{\hfil$##$\hfil&&\quad\hfil$##$\hfil\crcr
      \mathstrut\crcr\noalign{\kern-\baselineskip}
      #1\crcr\mathstrut\crcr\noalign{\kern-\baselineskip}}}\,}
\newdimen\harrowsize
\def\mapright#1{\smash{\mathop{\hbox to\harrowsize{\rightarrowfill}}\limits^{#1}}}
\gdef\cal{\fam\tw@}
\global\let\over\@@over
\global\let\atop\@@atop
\global\let\above\@@above
\global\let\overwithdelims\@@overwithdelims
\global\let\atopwithdelims\@@atopwithdelims
\global\let\abovewithdelims\@@abovewithdelims
\gdef\eqalign#1{\null\,\vcenter{\openup\jot\m@th
  \ialign{\strut\hfil$\displaystyle{##}$&$\displaystyle{{}##}$\hfil
      \crcr#1\crcr}}\,}
\newskip\xcentering \global\xcentering=0pt plus 1000pt minus 1000pt
\gdef\eqalignno#1{\displ@y \tabskip\xcentering
  \halign to\displaywidth{\hfil$\@lign\displaystyle{##}$\tabskip\z@skip
    &$\@lign\displaystyle{{}##}$\hfil\tabskip\xcentering
    &\llap{$\@lign##$}\tabskip\z@skip\crcr
    #1\crcr}}
\global\def\cases#1{\left\{\,\vcenter{\normalbaselines\m@th
    \ialign{$##\hfil$&\quad##\hfil\crcr#1\crcr}}\right.}
\gdef\eqlabel#1{\refstepcounter{equation}\label{eqn--#1}\eqno\hbox{\@eqnnum}}
\def \nts#1{ 
}
\def \arxiv#1{ 
\noindent\colorbox{hellblau}{\parbox{\dimexpr\textwidth-2\fboxsep\relax}{#1}}
}
\def \journal#1
\noindent\colorbox{dunkelblau}{\parbox{\dimexpr\textwidth-2\fboxsep\relax}{#1}}
\begin{document}

\title{The intersection motive of the moduli stack of shtukas}
\author{by Timo Richarz* and Jakob Scholbach}

\thanks{*Research of T.R.~funded by the Deutsche Forschungsgemeinschaft (DFG, German Research Foundation) - 394587809. Resarch of J.S.~funded by DFG, Sonderforschungsbereich 878.}

\maketitle

\begin{abstract}
For a split reductive group $G$ over a finite field, we show that the intersection (cohomology) motive of the moduli stack of iterated $G$-shtukas with bounded modification and level structure is defined independently of the standard conjectures on motivic $t$-structures on triangulated categories of motives. This is in accordance with general expectations on the independence of $\ell$ in the Langlands correspondence for function fields.
\end{abstract}

\setcounter{tocdepth}{1}
\tableofcontents

\section{Introduction}

\subsection{Motivation and goals}
Let $X$ be a variety over a field $k$, and fix a prime number $\ell\in \bbZ$ invertible on $k$. Since Grothendieck's construction of the $\ell$-adic \'etale cohomology groups $\H^i_\et(X,\bbQ_\ell)$, a central question is whether these are independent of the auxiliary prime number $\ell$, or even whether it is possible to find ``natural'' rational structures on them. This leads to the {\em theory of motives}.
Envisioned by Beilinson, and realised by Voevodsky \cite{Voevodsky:TCM}, Levine \cite{Levine:MixedMotives}, and Hanamura \cite{Hanamura:Motives1} for motives over $S=\Spec(k)$, and extended by Ayoub \cite{Ayoub:Six1, Ayoub:Six2, Ayoub:Realisation} and Cisinski--Déglise \cite{CisinskiDeglise:Triangulated, CisinskiDeglise:Etale} to motives over general base schemes $S$, there now exists a theory of motivic sheaves, i.e., a full six functor formalism for suitable triangulated categories $\DM(S)=\DM(S,\bbQ)$ of motives with rational coefficients over $S$.
By construction this theory of motives is independent of $\ell$, but explicit computations are difficult.
One of the main obstacles is the lack of a motivic $t$-structure on these categories, i.e., the existence of an object $\h^i(X) \in \DM(\Spec k)$ whose $\ell$-adic realization would be $\H^i_\et(X, \Ql)$.
This is part of the package of standard conjectures on motives which seem to be out of reach at the moment.

Levine \cite{Levine:Tate} showed, however, that a $t$-structure does exist on the subcategory $\DTM(S)\subset \DM(S)$ of Tate motives for certain ``nice'' $S$, for example, $S=\Spec(\Fq)$ or a smooth curve over $\Fq$, cf.~\refex{BS.exam}.
By definition, $\DTM(S)$ is the subcategory generated by motives of $\P[n]_S$ ($n \ge 0$) and their duals.
Soergel and Wendt \cite{SoergelWendt:Perverse} have extended Levine's observation to the case when $X$ is an $S$-scheme equipped with a so-called cellular Whitney-Tate stratification:
loosely speaking, this condition means that the strata of $X$ are built out of products of $\bbG_{m,S}$ or $\A^1_S$, and that one needs to be able to control the singularities of the closures of the strata.
While this condition is rather restrictive in general, it turns out that several varieties $X$ of interest in geometric representation theory do carry such stratifications.
For example, the flag variety $X=G/B$ associated with a split reductive $k$-group $G$ and a Borel subgroup $B$, equipped with its stratification by $B$-orbits, has this property \cite[Prop.~4.10]{SoergelWendt:Perverse}. In this situation, the category $\DTM(X)$ of {\em stratified Tate motives}, i.e., those which are Tate motives on each stratum, carries a $t$-structure whose heart is the abelian category of {\em mixed stratified Tate motives} $\MTM(X)$. The simple objects in $\MTM(X)$ are Tate twists of the intersection motives on the closures of the orbits of the left $B$-action on $X$.

The present paper is the first in a series aiming towards systematically applying the theory of motives as above to the constructions in the work of V.~Lafforgue \cite{Lafforgue:Chtoucas} on the Langlands correspondence over function fields. Here, we have two goals:

\begin{enumerate}
\item We provide a framework to handle motives on a large class of ``geometric objects'', namely prestacks.
\item We show that the {\em intersection cohomology motive} of the moduli stack of iterated $G$-shtukas with bounded modification and level structure is unconditionally defined, i.e., without reference to the standard conjectures.
\end{enumerate}

Part i) follows ideas of Gaitsgory-Rozenblyum \cite[\S3, 0.1.1]{GaitsgoryRozenblyum:StudyI}, and of Raskin \cite{Raskin:D-modules}.
A benefit of \refde{DM.prestacks} is that categories of motives on objects such as $X=L^+G\backslash LG/ L^+G$ (the double quotient of the loop group by the positive loop group) are well-defined independently of choices of presentations of the ind-scheme $LG$ or the pro-algebraic group $L^+G$. Here we provide only as much of the general theory as needed in order to construct ii).

Given i), the construction in ii) ultimately boils down (cf.~\S\ref{sect--DTM.Fl}, \S\ref{sect--intersection}) to an extension of the methods of \cite{SoergelWendt:Perverse} and of Soergel-Virk-Wendt \cite{SoergelVirkWendt:Equivariant} from ordinary schemes with an action of an algebraic group to ind-schemes with an action of a pro-algebraic group.
Our constructions are also applied in forthcoming work \cite{RicharzScholbach:Satake} to construct a motivic Satake equivalence in this set-up.

We note that ii) is in accordance with V.~Lafforgue's conjecture \cite[Conj.~12.12]{Lafforgue:Chtoucas} which says that the decomposition of the space of cusp forms obtained in {\em loc.~cit.} over $\bar{\bbQ}_\ell$ is in fact defined over $\bar{\bbQ}$ and does neither depend on the embedding $\bar{\bbQ}\hookrightarrow\bar{\bbQ}_\ell$ nor on the chosen prime number $\ell$.

\subsection{Statement of results}

Let $S$ be the spectrum of a field, or the spectrum of the integers (or more generally as in \refno{S}). Let $\AffSch_S$ (resp.~$\AffSch_S^\ft$) be the category of affine schemes equipped with a map (resp.~finite type map) to $S$. For each $X\in \AffSch_S^\ft$, we let $\DM(X)=\DM(X,\bbQ)$ be the triangulated category of motives with rational coefficients à la Ayoub and Cisinski-D\'eglise, cf.~\refsect{DM.schemes}. For a map $f\co X\to Y$ in $\AffSch_S^\ft$, there are pairs of adjoint functors $(f^*,f_*)$, $(f_!,f^!)$ satisfying the usual compatibilities, cf.~\refsy{motives}.
For our purposes, it is convenient to view $\DM(X)$ as a presentable stable \ii-category, and following Hoyois and Khan the pair of adjoint functors $(f^*,f_*)$, $(f_!,f^!)$ can be viewed as colimit-preserving functors between these categories.
We need to consider motives on quite general geometric objects, e.g.~ind-Artin stacks, or stacky quotients by pro-algebraic groups.
Following ideas of Gaitsgory-Rozenblyum and Raskin, it is convenient to use the notion of \ii-prestacks: The category of prestacks is the functor category in the sense of Lurie
\[
\PreStk_S\defined \Fun((\AffSch_S)^\opp,\infty\str\Gpd).
\]
The existence of all (homotopy) limits and colimits in $\PreStk_S$ has the advantage that ind-objects (ind-schemes, ind-Artin stacks), pro-objects (pro-algebraic groups), or quotients under group actions are prestacks.

\medskip
\noindent\textbf{Theorem A.}\textit{
i\textup{)} The presheaf of \ii-categories $\DM\co (\AffSch_S^\ft)^\opp\to \Cat_\infty$ given by
\[
X\mapsto \DM(X), \;\; f\mapsto f^!
\]
can be upgraded to a presheaf $\DM\co \PreStk_S\to \DGCat_\cont$ where the target is the \ii-category of presentable stable $\bbQ$-linear dg-\ii-categories with continuous functors. In particular, for any map of prestacks $f\co X\to Y$ there is a functor $f^!\co \DM(Y)\to \DM(X)$ in $\DGCat_\cont$ \textup{(}see \textup{\refsect{DM.prestacks})}.\medskip\\
ii\textup{)} The presheaf $\DM\co \PreStk_S\to \DGCat_\cont$ is a sheaf of \ii-categories for Voevodsky's $h$-topology \textup{(}see \textup{\ref{theo--DM.descent}, \ref{prop--unseparated}, \ref{theo--descent.prestacks})}.\medskip\\
iii\textup{)}  For any map $f\co X\to Y$ of strict ind-Artin stacks ind-\textup{(}locally of finite type\textup{)} over $S$, there is an adjunction
\[
f_! : \DM(X) \leftrightarrows \DM(Y) : f^!.
\]
For any prime $\ell\in \bbZ$ invertible on $S$, this adjunction agrees under the $\ell$-adic \'etale realization functor with the adjunction constructed in the work of Liu-Zheng in the case of Artin stacks $X$ and $Y$ \textup{\cite{LiuZheng:Enhanced, LiuZheng:EnhancedAdic}} \textup{(}see \textup{\ref{theo--f!.Artin})}.\medskip\\
iv\textup{)} For strict ind-schemes of ind-\textup{(}finite type\textup{)} over $S$, there is a six functor formalism under certain restrictions on the $*$-pullback functor \textup{(}see \textup{\ref{theo--motives.Ind-schemes})}.
\medskip\\
v\textup{)} Hom-sets in the category $\DM(X/G)$ of equivariant motives reproduce equivariant higher Chow groups as introduced by Totaro and Edidin--Graham \cite{Totaro:Chow, Totaro:Motive, EdidinGraham:Equivariant} \textup{(}see \textup{\ref{theo--equivariant.Chow})}.
}
\medskip

We comment on the result: In i), it is also possible to upgrade the presheaf $X \mapsto \DM(X), f \mapsto f^*$ to prestacks. We work with the $!$-pullback because of the following result of Lurie (cf.~\refle{Lurie.co.limit} below): Let $X=\colim_i X_i$ where $X_i$ are Artin stacks locally of finite type over $S$, and the transition maps $t_{i,j}\co X_i\to X_j$ are closed immersions. Then there are natural equivalences (in $\DGCat_\cont$)
\[
\DM(X) \;=\; \lim_{t^!} \DM(X_i) \;=\; \colim_{t_!} \DM(X_i).
\]
This is in accordance with ad hoc definitions of say bounded derived categories of \'etale constructible sheaves on strict ind-Artin stacks or strict ind-schemes as commonly used. Part ii) shows that the category of motives on prestacks is insensitive to $\tau$-stackification (in the \ii-sense) where $\tau$ is a Grothendieck topology contained in the $h$-topology. More precisely, for any prestack $X$, the $!$-pullback along the canonical map $X\to X^\tau$ induces an equivalence (in $\DGCat_\cont$)
\[
\DM(X^\tau) \;=\; \DM(X).
\]
In particular (take $\tau=\Zar$), it shows that for all Noetherian schemes of finite Krull dimension $X$ (equipped with a map to $S$) the category $\DM(X)$ is the category of motives as defined in Cisinski-D\'eglise \cite{CisinskiDeglise:Triangulated}, cf.~\refre{DM.prestacks} v).
Part ii) is similar to \cite[Prop.~6.23]{Hoyois:Six}. Part iii) allows to conveniently define the motive of an ind-Artin stack $f\co X\to S$ ind-(locally of finite type) as
\[
M(X)\defined f_!f^!(X)\;\in\; \DM(S).
\]
This recovers the motive $\M(X)$ of finite type schemes $X$ over a perfect field defined by Voevodsky, and puts computations of the motive of the affine Grassmannian in \cite{Bachmann:Affine}, or of the motive of the moduli stack of vector bundles on a curve in \cite{HoskinsLehalleur:Formula} into a more functorial context. Part iv) concludes the basic framework as needed in the constructions of the present manuscript. To keep the manuscript at a reasonable length, we chose not to discuss extensions of the full six functor formalism, say to (higher) Artin stacks as provided in the \'etale set-up by \cite{LiuZheng:Enhanced, LiuZheng:EnhancedAdic}.

With applications to geometric representation theory in mind, and more specifically to \cite{Lafforgue:Chtoucas}, we aim to construct {\em intersection motives}. Their existence is predicted by the standard conjectures. In order to make our results unconditional, we need to drastically restrict the class of objects, cf.~\refsect{DTM}. Here, we consider a strict ind-scheme of ind-finite type over $S$ equipped with a stratification into locally closed strata
$$\iota: X^+ := \bigsqcup_{w \in W} X_w \r X,$$
where each stratum $X_w$ is a cellular $S$-scheme, i.e., each $X_w\to S$ is smooth, and can be stratified further by products of $\bbG_{m,S}$ or $\bbA^1_{S}$.
Let $\DTM(X^+)\subset \DM(X^+)$ be the full subcategory generated by Tate motives, i.e., by all unit motives $1_{X_w}(n)[m]$, for all $w\in W$, $n,m \in \Z$, cf.~\refde{Tate.geometry}.
Following Soergel-Wendt \cite[Def.~4.5]{SoergelWendt:Perverse} (who consider the case of finite type schemes over fields), the stratification $\iota\co X^+\to X$ is called Whitney-Tate if
\[
\iota^*\iota_*1_{X^+} \;\in\; \DTM(X^+),
\]
i.e., $(\iota|_{X_v})^*(\iota|_{X_w})_*1_{X_w}$ is a Tate motive on $X_v$ for each $v,w\in W$. As is well-known \cite[1.4.9]{BBD}, this condition ensures that the categories of Tate motives on the single strata can be glued together.
Applications of Whitney-Tate stratified motives to representation theory have also been studied by Eberhardt and Kelly \cite{EberhardtKelly:Mixed}.

\medskip
\noindent\textbf{Theorem B.}
\textit{
Let $X$ be an ind-scheme of ind-finite type over $S$ equipped with a cellular Whitney-Tate stratification $\iota\co X^+\to X$.\medskip\\
i\textup{)}
There is a well-defined stable full \ii-subcategory $\DTM(X) \subset \DM(X)$ of stratified Tate motives. It consists of those motives $M$ such that the pullback $\iota^* M$ or, equivalently $\iota^! M$ is Tate, i.e., lies in $\DTM(X^+)$ \textup{(}see \textup{\ref{defilemm--Whitney.Tate.condition})}.\medskip\\
ii\textup{)}
If $S$ satisfies the Beilinson-Soulé vanishing conjecture, there is a self-dual motivic $t$-structure on $\DTM(X)$. Its heart $\MTM(X)$ is generated by the intersection motives
\[
\IC_w(n) \defined j_{w,!*}\left(1_{X_w}(n)[d_w]\right) \;\in\; \DTM(X)
\]
supported on the closure $\bar X_w$, where $j_w\co X_w\to \bar{X}_w$, $w\in W$, $n\in\bbZ$. These are precisely the simple objects in $\MTM(X)$, and their $\ell$-adic realizations are the intersection complexes defined by the middle perverse extension of the constant $\ell$-adic sheaves $\bbQ_\ell(n)$ on the strata $X_w$ \textup{(}see \textup{\ref{coro--t-structure}, \ref{theo--simple.objects})}.\medskip\\
iii\textup{)} Let $G=\lim_{i\geq 0}G_i$ be a pro-smooth affine $S$-group scheme acting on $X$ compatibly with the stratification. Assume that the scheme underlying each $G_i$ is cellular, and that each $\ker(G_{i+1}\to G_i)$ is a vector group. There is a well-defined stable full \ii-subcategory $\DTM_G(X) \subset \DM(G\backslash X)$ of equivariant stratified Tate motives where $G\backslash X$ is the prestack quotient. If $S$ is as in ii\textup{)}, then there is a self-dual motivic $t$-structure on $\DTM_G(X)$. Its heart $\MTM_G(X)$ contains the intersection motives $\IC_w(n)$ which map under the forgetful functor
\[
\MTM_G(X) \to \MTM(X)
\]
to the intersection motives as in ii\textup{)}. If $G$ has connected fibers, this forgetful functor is fully faithful. If in addition the stabilizers of the $G$-action are connected, then $\MTM_G(X)$ is also generated by the $\IC_w(n)$, $w\in W$, $n\in \bbZ$ \textup{(}see \textup{\ref{defi--DTM.G}, \ref{prop--DTM.G}, \ref{prop--MTM.G}, \ref{prop--equivariant.MTM}, \ref{prop--generators.DTM.G})}.
}
\medskip

Having the six-functor formalism for motives on ind-schemes, the proofs of i) and ii) are immediate from \cite{SoergelWendt:Perverse}. We note that the Beilinson-Soulé conjecture is known by Quillen's, Borel's, and Harder's work, for $S$ being the spectrum of a finite field, a number field or localizations of its ring of integers, for a smooth curve over a finite finite or its function field, and finally for filtered colimits of such objects, cf.~\refex{BS.exam}. We now comment on iii). The notation $\DTM_G(X)$ (as opposed to $\DTM(G\backslash X)$) highlights the fact that this category depends on the chosen presentation of the prestack $G\backslash X$. Put differently, Tate motives do (by definition) not satisfy descent, cf.~\refex{descent.Tate.not}. The assumption on $\ker(G_{i+1}\to G_i)$ being a vector group ensures that $\DM(G\backslash \bar{X}_w)=\DM(G_i\backslash \bar{X}_w)$ for $i>\!\!>0$. It is satisfied for an interesting class of pro-smooth affine group schemes which are constructed as ``positive loop groups'' (a.k.a.~``jet groups''), cf.~\refpr{vector.extension}.

We now give two closely related applications of the theory developed so far, cf.~\S\S\ref{sect--DTM.Fl}-\ref{sect--intersection}. Let $G$ be a split reductive group over the integers $\bbZ$ (a.k.a.~a Chevalley group), and fix a Borel pair $T\subset B$. The loop group is the functor
\[
LG\co \Rings\to \Grps,\;\; R\mapsto G(R\rpot{\varpi}),
\]
where $R\rpot{\varpi}$ denotes the Laurent series ring in the formal variable $\varpi$. The group functor $LG$ is represented by an ind-affine ind-scheme over $\bbZ$. Associated with each facet $\bbf$ in the standard apartment $\scrA=\scrA(G,T)$ of the Bruhat-Tits building is the closed $\bbZ$-subgroup
\[
\calP_\bbf \;\subset \; LG,
\]
called {\em parahoric subgroup}. The group $\calP_\bbf=\lim_{i\geq 0}\calP_{\bbf,i}$ is a pro-smooth affine $\bbZ$-group scheme with connected fibers such that each $\ker(\calP_{\bbf,i+1}\to \calP_{\bbf,i})$ is a vector group. If $\bbf=\{0\}\subset \scrA$ is the base point, then $L^+G=\calP_{\{0\}}$ is the positive loop group given by $L^+G(R)=G(R\pot{\varpi})$. If $\bbf=\bba_0$ is the standard alcove determined by $B$, then $\calP_{\bba_0}$ is the (standard) Iwahori subgroup given as the preimage of $B$ under the reduction map $L^+G\to G$, $\varpi\mapsto 0$. For general facets, these groups can be described in terms of Bruhat-Tits theory.

Now fix two facets $\bbf,\bbf'\subset \scrA$ which are contained in the closure of the standard alcove. We are interested in motives on the double quotient
\[
\calP_{\bbf'}\backslash LG/\calP_\bbf \;\in\; \PreStk_{\Spec(\bbZ)}.
\]
We recover the set-up of Theorem B by considering the left-$\calP_{\bbf'}$-action on the ind-projective ind-scheme $\Fl_\bbf=(LG/\calP_{\bbf})^\et$, resp.~the right-$\calP_{\bbf}$-action on $\Fl_{\bbf'}^\opp=(\calP_{\bbf'}\backslash LG)^\et$. Using \'etale descent for motives as in Theorem A ii), we see that the resulting equivariant categories are the same, and equal to $\DM(\calP_{\bbf'}\backslash LG/\calP_\bbf)$. For convenience of the reader, we note that the \'etale stackification $(\calP_{\bbf'}\backslash LG/\calP_\bbf)^\et$ automatically is an fpqc stack, cf.~\refle{DM.double.tau}.

\medskip
\noindent\textbf{Theorem C.}\textit{
i\textup{)} The stratification of $\Fl_\bbf$ \textup{(}resp.~$\Fl_{\bbf'}^\opp$\textup{)} by orbits of the left-$\calP_{\bbf'}$-action \textup{(}resp.~right-$\calP_{\bbf}$-action\textup{)} is cellular and Whitney-Tate in the sense outlined above. Further, Theorem B i\textup{)}-iii\textup{)} applies \textup{(}see \textup{\ref{theo--Fl.WT}, \ref{theo--generators.DTM.flag})}.\medskip\\
ii\textup{)}
There is an equivalence of the resulting categories of equivariant stratified Tate motives
\[
\DTM_{\calP_{\bbf'}}(\Fl_\bbf)\;=\; \DTM_{\calP_{\bbf}}(\Fl_{\bbf'}^\opp)
\]
as full subcategories of $\DM(\calP_{\bbf'}\backslash LG/\calP_\bbf)$. If $\bbf=\bbf'$, then the $t$-structures delivered by part i\textup{)} \textup{(}and Theorem B\textup{)} also agree, and their heart is the $\bbQ$-linear abelian full subcategory
\[
\MTM(\calP_\bbf\backslash LG/\calP_\bbf) \;\subset\; \DM(\calP_\bbf\backslash LG/\calP_\bbf),
\]
which is the symmetric version of Theorem B iii\textup{)}. It is generated by intersection motives of the Schubert varieties inside $\Fl_\bbf$ \textup{(}see \textup{\ref{theo--equivariant.DTM.flag})}.
}
\medskip

The basic geometric properties of affine flag varieties for Chevalley groups over $\bbZ$, resp.~more general base schemes $S$ are given in \refsect{loop.grps}.
In Theorem C, we can replace $\Spec(\bbZ)$ by any regular base scheme $S$ which satisfies the Beilinson-Soulé vanishing conjecture as above.
Also it seems possible to extend our results to twisted affine flag varieties in the sense of Pappas-Rapoport \cite{PappasRapoport:LoopGroups}.
In \cite{RicharzScholbach:Satake} we will take $\bbf=\bbf'=\{0\}$ and use the category $\MTM(L^+G\backslash LG/L^+G)$ to establish a motivic Satake equivalence in this set-up.
A nice feature of the symmetry of the double quotient is that the operation $LG\to LG$, $g\mapsto g^{-1}$ induces an anti-involution on the category of stratified mixed Tate motives.

A version of Theorem C for Witt vector (or $p$-adic) affine flag varieties is contained in the first arxiv version of this paper, and will be published elsewhere together with a motivic Satake equivalence in this context.

With this general set-up in hand, we construct in \refsect{intersection} the intersection cohomology motive of the moduli stack of bounded shtukas.
We view these constructions as a first step towards a motivic rendition of V.~Lafforgue's work in the function field case of the Langlands program since it offers a geometric understanding of the $\ell$-adic intersection cohomology of the moduli stack of $G$-shtukas in a way which is independent of $\ell$.
Further steps towards this goal, including the afore-mentioned Satake equivalence, a motivic Drinfeld lemma, a motivic construction of excursion operators, and their identification with Hecke operators, remain to be done.

Let $T\subset B\subset G$ be defined over the finite field $k=\Fq$. Let $X$ be a smooth projective geometrically connected curve over $k$. For any effective divisor $N\subset X$ and any partitioned finite index set $I=I_1\sqcup\ldots\sqcup I_r$, there is the moduli stack of iterated $G$-shtukas with level-$N$-structure
\[
\Sht_{N,I}^{(I_1,\ldots,I_r)}\defined \lan (\calE_r,\be_r)\overset{\al_r}{\underset{I_r}{\dashrightarrow}}(\calE_{r-1},\be_{r-1})\overset{\al_r}{\underset{I_{r-1}}{\dashrightarrow}}\ldots\overset{\al_2}{\underset{I_2}{\dashrightarrow}} (\calE_1,\be_1)\overset{\al_1}{\underset{I_1}{\dashrightarrow}} (\calE_0,\al_0)=({^\tau\calE_r},{^\tau\al}_r) \ran,
\]
as considered in \cite[D\'ef.~2.1]{Lafforgue:Chtoucas}, cf.~\refsect{shtukas.def} for notation.
This moduli stack is equipped with a forgetful map $\Sht_{N,I}^{(I_1,\ldots,I_r)}\to (X\backslash N)^I$.
For an admissible tuple $\underline{\mu}=(\mu_i)_{i\in I}\in X_*(T)_+^I$ of dominant cocharacters, we can bound the relative positions of each modification $\al_j$ by $\mu_j$ to get a closed substack $\Sht_{N,I,\underline{\mu}}^{(I_1,\ldots,I_r)}\subset \Sht_{N,I}^{(I_1,\ldots,I_r)}$ which is representable by a (reduced) Deligne-Mumford stack locally of finite type over $k$. Varying $\underline{\mu}$, we see that $\Sht_{N,I}^{(I_1,\ldots,I_r)}$ has the structure of a strict ind-Deligne-Mumford stack ind-(locally of finite type) over $k$. In particular, Theorem A iii) applies in this context.
Fixing a total order on $I=\{1,\ldots,n\}$ which refines the partition $I=I_1\sqcup\ldots\sqcup I_r$, there are maps of \'etale sheaves of groupoids
\[
\Sht_{N,I}^{(I_1,\ldots,I_r)} \;\overset{\pi}{\longleftarrow}\; \Sht_{N,I}^{(\{1\},\ldots,\{n\})} \;\overset{\inv}{\longrightarrow}\; \bigsqcap_{i=1,\ldots,n}(L_{\{i\}}^+G\backslash L_{\{i\}}G/L_{\{i\}}^+G)^\et,
\]
where $\inv\co ((\calE_n,\be_n)\overset{\al_n}{\dashrightarrow}\dots \overset{\al_1}{\dashrightarrow}(\calE_0,\be_0)=(^\tau{\calE_n},{^\tau\be}_n))\mapsto (\inv(\al_i))_{i=1,\ldots, n}$ is the relative position, cf.~\eqref{invariant.map}.
The target of the map $\inv$ is the stack of relative positions: each factor in the product has a forgetful map to $X$ whose fiber over $x \in X$ is $(L^+G\backslash LG/L^+G)^\et\otimes_k \kappa(x)$.
Colloquially speaking, two such $G$-bundles $\calE_i$ and $\calE_{i-1}$ differ by an elementary modification $\al_i$ at some point of $X$, and the invariant $\inv(\al_i)$ is simply the double coset of the matrix changing the transition function from the one $G$-bundle to the other.
Further, there is a canonical map
\begin{equation}\label{map.intro}
(L_{\{i\}}^+G\backslash L_{\{i\}}G/L_{\{i\}}^+G)^\et\to \left(L^+\bbG_m\backslash(L^+G\backslash LG/L^+G)\right)^\et,
\end{equation}
where $L^+\bbG_m$ acts on $L^+G\backslash LG/L^+G$ by changing the variable $\varpi$ used to form the loop groups. For each $\mu\in X_*(T)_+$, the intersection motive $\IC_\mu\in \MTM(L^+G\backslash LG/L^+G)$ is $L^+\bbG_m$-equivariant, and its pullback via \eqref{map.intro} is denoted $\IC_{\mu,\{i\}}\in \DM((L_{\{i\}}^+G\backslash L_{\{i\}}G/L_{\{i\}}^+G)^\et)$. For each effective divisor $N\subset X$, and each admissible tuple $\underline{\mu}=(\mu_i)_{i\in I}\in X_*(T)_+$, we define
\[
\calF_{N,I,\underline{\mu}}^{(I_1,\ldots,I_r)}\defined\pi_{!}\left(\inv^! \left(\boxtimes_{i=1}^n\IC_{\{i\},\mu_i}\right)\right)\;\in\; \DM\left(\Sht_{N,I}^{(I_1,\ldots,I_r)}\right).
\]
Note that the construction combines the intersection motives obtained by Theorem B (whose assumptions are satisfied by Theorem C) and the general functoriality given by Theorem A.

\medskip
\noindent\textbf{Theorem D.} (see \ref{coro--prelim.intersection.motives})
\textit{
The motive $\calF_{N,I,\underline{\mu}}^{(I_1,\ldots,I_r)}$ is supported on $\Sht_{N,I,\underline{\mu}}^{(I_1,\ldots,I_r)}$, and its $\ell$-adic realization is \textup{(}up to twist and the choice of a lattice in the adelic center\textup{)} the intersection complex defined in \textup{\cite[D\'ef.~2.14]{Lafforgue:Chtoucas}}. This defines the intersection cohomology motive
\[
\calH_{N,I,\underline{\mu}}\defined p_!\left(\calF_{N,I,\underline{\mu}}^{(I_1,\ldots,I_r)}\right)\;\in\; \DM\left((X\backslash N)^I\right),
\]
whose $\ell$-adic realization is \textup{(}up to the normalizations above\textup{)} the intersection cohomology complex defined in \textup{\cite[D\'ef.~4.1]{Lafforgue:Chtoucas}}.
}
\medskip

The proof is immediate from results of Varshavsky. Namely, \cite[Cor.~2.21]{Varshavsky:Moduli} implies that the motives $\calF_{N,I,\underline{\mu}}^{(I_1,\ldots,I_r)}$ realize $\ell$-adically to the intersection complexes on $\Sht_{N,I,\underline{\mu}}^{(I_1,\ldots,I_r)}$:
Roughly, after cutting down the situation to finite-dimensional stacks, the map $\inv$ is the map onto the local model, cf.~\cite[Prop.~2.8, 2.9]{Lafforgue:Chtoucas}. This map is smooth, and thus pullback preserves the intersection complex (up to shift).
Next $\pi$ is a small map with connected fibers (cf.~\cite[Cor.~2.18]{Lafforgue:Chtoucas}), and thus pushforward preserves the intersection complex as well.
By construction, the motives $\calF_{N,I,\underline{\mu}}^{(I_1,\ldots,I_r)}$ depend on a total ordering of $I$, whereas their $\ell$-adic realizations only depend on the ordered partition $I=I_1\sqcup\ldots\sqcup I_r$, cf.~\cite[Thm.~1.17, Cor.~2.15, 2.16]{Lafforgue:Chtoucas}.
We expect that the same result holds true for the intersection motives, cf.~\refre{WT.properties.fusion}.

\bigskip

\noindent\textbf{Acknowledgements.} The authors greatly benefited from discussions with Johannes Ansch\"utz, Denis-Charles Cisinski, Laurent Fargues, Dennis Gaitsgory, Jochen Heinloth, Simon Henry, Thomas Nikolaus, Peter Scholze, Beno\^it Stroh, Torsten Wedhorn and Xinwen Zhu around the subject of the manuscript, and it is a pleasure to thank all of them. We also thank Peter Scholze for pointing out a mistake in a previous version, and the anonymous referee for his/her careful reading which greatly improved the quality of the manuscript.
The authors thank the University of M\"unster, Harvard University, Deutsche Forschungsgemeinschaft, the Institut de Math\'ematiques de Jussieu and the Technische Universit\"at Darmstadt for financial and logistical support which made this research possible.

\section{Motives}\label{sect--DM}

In this section we develop motives in the generality we need.
In \refsect{DM.schemes}, we list properties of motives on finite-dimensional schemes.
This part is mostly expository, except for two aspects: the functors $f_!$ and $f^!$ are established for non-separated maps. This is possible because of a conceptual formulation of $h$-descent for motives.
In \refsect{DM.prestacks}, we define motives on prestacks.
Prestacks are very general ``geometric objects'', encompassing (ind-)schemes, and quotients of them by (pro-)algebraic groups, and also (ind-)Artin stacks.
In \refsect{DM.Artin}, we prove a comparison result with the $!$-pushforward for a map of Artin stacks locally of finite type under the $\ell$-adic \'etale realization.
In \refsect{DM.ind-schemes}, we construct a six functor formalism for motives on ind-schemes.

\nota
\label{nota--S}
Throughout \refsect{DM}, $S$ is a regular scheme which is of finite type over a scheme $B$ which is Noetherian, excellent, and of dimension at most 2 \textup{(}for example of $B = \Spec(k)$, a field or the integers\textup{)}.
The category of \textup{(}not necessarily separated\textup{)} finite type $S$-schemes is denoted $\Sch_S^\ft$.
\xnota

\subsection{Motives on finite-dimensional schemes}
\label{sect--DM.schemes}
In this subsection, we recall several properties of the category of \emph{motives on a scheme $X$ with rational coefficients}
$$\DM(X) \defined \DM(X, \Q), \ \ \text{ for } X \in \Sch_S^\ft.$$
Very briefly, the category $\DM(X)$ is constructed from (unbounded) complexes of étale sheaves of $\Q$-vector spaces on the site $\Sm / X$ of smooth $X$-schemes. In this category, the relation $Y \x_X \A^1_X \cong Y$ is imposed for any $Y \in \Sm /X$.
Moreover, tensoring with $\P$ is made invertible.
This category is denoted by $\DA^\et(X, \Q)$ in \cite{Ayoub:Realisation} and by $\D_{\A, \et}(X, \Q)$ in \cite{CisinskiDeglise:Triangulated}.

The power of this approach to motives lies in the existence of a six-functor-formalism. It was first established by Ayoub \cite{Ayoub:Six1,Ayoub:Six2}.
Particularly relevant for this paper is the construction of $\DM$ due to Cisinski and Déglise \cite{CisinskiDeglise:Triangulated}.
These papers phrase many key results in terms of triangulated categories which is not sufficiently structured for the purposes of this paper.
Instead, we need to use an \ii-categorical formalism, which was worked out by Hoyois \cite{Hoyois:Six} and Khan \cite{Khan:Motivic} for the stable homotopy category $\SH$.
We now list properties of $\DM$ as needed in the paper.

\syno\label{syno--motives}
Let $f\co X\r Y$ be a map in $\Sch_S^\ft$. The category $\DM$ of motives with rational coefficients satisfies the following properties.
\smallskip\\
i) The category $\DM(X)$ is a stable, presentable, closed symmetric monoidal \ii-category with tensor structure denoted $\otimes$ and internal homomorphisms denoted $\IHom$.
Its monoidal unit is denoted $1$ or $1_X$.
It has all (homotopy) limits and colimits. As usual, the suspension functor is denoted by $[1]$.
(These properties hold since $\DM$ is the \ii-category associated to the model category denoted $\HBei$-mod in \cite[14.2.7ff]{CisinskiDeglise:Triangulated}.)
\nts{Existence of (homotopy) (co)limits holds since it the model category is bicomplete.}
\smallskip\\
ii)
The assignment $X \mapsto \DM(X)$ can be upgraded to a presheaf of symmetric monoidal \ii-categories \cite[Def 1.1.29]{CisinskiDeglise:Triangulated}
$$\DM^*\co (\Sch_S^{\ft})^\opp \r \Cat_\infty^\t,\; X \mapsto \DM(X),\; f \mapsto f^*.$$
For each $f$, there is an adjunction
$$f^* : \DM(Y) \rightleftarrows \DM(X) : f_*.\eqlabel{adjunction.star}$$
\nts{The existence of $f^*$, $f_*$ with monoidal $f^*$ is called complete in \cite[Definition 1.1.29]{CisinskiDeglise:Triangulated}. By definition, it is required for a motivic category.}
\smallskip\\
iii)
If $f$ is smooth, then $f^*$ has a left adjoint, denoted $f_\sharp$
(\cite[Def 1.1.2]{CisinskiDeglise:Triangulated} with $\calP$ consisting of smooth morphisms.)
\smallskip\\
iv)
The assignment $X \mapsto \DM(X)$ can be upgraded to a presheaf of \ii-categories\footnote{\label{foot.separated}In loc.~cit., whenver $f^!$ or $f_!$ are concerned, $f$ is required to be separated. See \refpr{unseparated} how to drop this assumption.}
$$\DM: (\Sch_S^\ft)^\opp \r \Cat_\infty,\; X \mapsto \DM(X),\; f \mapsto f^!.$$
For each $f$, there is an adjunction
$$f_! : \DM(X) \rightleftarrows \DM(Y): f^!.\eqlabel{adjunction.shriek}$$
For any factorization $f = p \circ j$ with $j$ an open immersion and $p$ a proper map, there is a natural equivalence $f_! \stackrel \cong \r p_* j_\sharp$.
In particular, for $p$ proper, $p^!$ is left adjoint to $p_! = p_*$.
(This pair of functors is the hardest to construct.
See \cite[§1.6.5]{Ayoub:Six1} for quasi-projective maps, \cite[Thm.~2.4.50, Prop 2.2.7]{CisinskiDeglise:Triangulated} in general. These statements are using the language of triangulated categories. See \cite[§6.2]{Hoyois:Six} or \cite[§5.2]{Khan:Motivic} in the context of \ii-categories.)
\nts{The existence of $f^!$, $f_!$ is in \cite[Theorem 2.4.50]{CisinskiDeglise:Triangulated}.
The given characterization of $f_!$ is \cite[Proposition 2.2.7]{CisinskiDeglise:Triangulated}.}
\smallskip\\
v) For the projection $p: \GmX X \r X$, and any $M \in \DM(X)$, the map $p_\sharp p^* M[-1] \r M[-1]$ in $\DM(X)$ is a split monomorphism.
The complementary summand is denoted by $M(1)$.
The functor $M \mapsto M(1)$ is an equivalence with inverse denoted by $M \mapsto M(-1)$ \cite[Def 2.4.17]{CisinskiDeglise:Triangulated}.
\nts{This is called the stability property \cite[Definition 2.4.17]{CisinskiDeglise:Triangulated}, by definition part of being motivic.}
\smallskip\\
vi) The category $\DM(X)$ is compactly generated by the objects $t_\sharp 1(n)$, $t: T \r X$ smooth and $n \in \Z$ \cite[Thm.~4.5.67]{Ayoub:Six2}. In particular, the monoidal unit $1_X \in \DM(X)$ is compact.
The functors $f_\sharp$, $f_*$, $f^*$, $f_!$, and $f^!$ preserve compact objects \cite[Prop.~15.1.4, Thm.~15.2.1]{CisinskiDeglise:Triangulated}.
These functors also preserve arbitrary (homotopy) colimits:
for $f_*$ and $f^!$ this follows from compact generation of $\DM$ and preservation of compact objects under $f^*$ and $f_!$, respectively.\nts{for the left adjoints $f_\sharp$, $f^*$ and $f_!$ this is clear}
\smallskip\\
vii)
There is a \emph{projection formula} $(f_! M) \t N = f_! (M \t f^* N)$ \cite[Thm.~2.4.50]{CisinskiDeglise:Triangulated}.\textsuperscript{\ref{foot.separated}}
\smallskip\\
viii)
If $p\co X\r S$ denotes the structural map, then the \emph{dualising functor}
$$\Du_X \defined \IHom(-, p^! 1)$$
is a (contravariant) involution on the subcategory $\DM(X)^\comp$ of compact objects, i.e., $\Du_X \circ \Du_X = \id$. Furthermore, on compact objects, there are equivalences (\cite[Thm.~15.2.4]{CisinskiDeglise:Triangulated}, this uses all the assumptions in \refno{S})\textsuperscript{\ref{foot.separated}}
$$\Du_Y f_! = f_* \Du_X, f^* \Du_Y = \Du_X f^!.$$
ix) For a closed immersion $i: Z \r X$ with complement $j: U \r X$, the (co)units of the adjunctions above assemble into so-called \emph{localization} homotopy fiber sequences
(see around \cite[Prop.~2.3.3]{CisinskiDeglise:Triangulated}\textsuperscript{\ref{foot.separated}}):
$$i_! i^! \r \id \r j_* j^* \stackrel {[1]\;} \r,\eqlabel{localizationOne}$$
$$j_! j^! \r \id \r i_* i^*\stackrel {[1]\;} \r.\eqlabel{localization}$$
x) For a cartesian diagram in $\Sch_S^\ft$
$$\xymatrix{
X' \ar[r]^{g'} \ar[d]^{f'} & X \ar[d]^f \\
Y' \ar[r]^g & Y,
}$$
there are natural equivalences (called \emph{base change}) \cite[Thm.~2.4.50]{CisinskiDeglise:Triangulated}\textsuperscript{\ref{foot.separated}}:
\begin{align}
g^! f_* & \stackrel \cong \lr f'_* g'^!, \label{eqn--base.change.1} \\
f^* g_! & \stackrel \cong \lr g'_! f'^*. \label{eqn--base.change.2}
\end{align}
xi) The category $\DM$ is \emph{homotopy-invariant} in the sense that for the projection map $p\co \A^n_X \r X$ for any $n\in\Z_{\geq 0}$, the counit and unit maps $p_\sharp p^* \r \id$ and $\id \r p_* p^*$ are functorial equivalences in $\DM(X)$ \cite[2.1.3]{CisinskiDeglise:Triangulated}.
\smallskip\\
xii) If $f$ is smooth of relative dimension $d$, there is a functorial equivalence (called \emph{relative purity})
$$f^! = f^* (d)[2 d].\eqlabel{relative.purity}$$
(See \cite[§1.6.3]{Ayoub:Six1} or \cite[Thm 2.4.50]{CisinskiDeglise:Triangulated} or \cite[pp. 272--273]{Hoyois:Six} for a formulation in the language of \ii-categories.
The identification of the Thom equivalence as stated follows from the orientability of $\DM$ \cite[§14.1.5]{CisinskiDeglise:Triangulated}.)
\smallskip\\
xiii) If $X$ is regular,
$$ \Hom_{\DM(X)}(1_X, 1_X(n)[m])\,=\,(K_{2n-m}(X) \t \Q)^{(n)},\eqlabel{DM.K-theory}$$
where the right term denotes the $n$-th Adams eigenspace in the rationalized algebraic $K$-theory of $X$ \cite[§14]{CisinskiDeglise:Triangulated}.
Here $\Hom_{\DM(X)}$ denotes the set of morphisms in the homotopy category.
If $X$ is of finite type over a field, this group also identifies with Bloch's higher Chow group  $\CH^n(X, 2n-m)_\Q$, see \cite{Bloch:Algebraic}.
\smallskip\\
xiv)
The presheaves $\DM^*$ and $\DM^!$ are sheaves for the h-topology (for which see, e.g., \cite[\S 8]{Rydh:Submersions}).
That is, if $f\co X \r Y$ is an $h$-covering in $\Sch_S^\ft$ with \v Cech nerve $C_f^\bullet \r Y$, $C_f^n=X^{\x_Y{n+1}}$, the natural map
$$\DM(Y) \;\r\; \DM^*(C_f^\bullet) := \lim_{n \in (\Delta_s)^\opp} \DM(C_f^n)$$
is an equivalence and likewise for $\DM^!$.
Here $\lim$ denotes the limit in the \ii-bicategory $\Cat_\infty$ of \ii-categories.
The category $\Delta_s$ is the subcategory of the usual simplex category $\Delta$ consisting of injective order-preserving maps.
We refer to this property as \emph{h-descent}.
(This is discussed in \refth{DM.descent} below.)
\smallskip\\
xv) Suppose $S$ is an excellent scheme and of Krull dimension at most 2 and $X  / S$ is separated.
The category $\DM(X)$ is equipped with a \emph{weight structure} $(\DM(X)^{\wt \le 0}, \DM(X)^{\wt \ge 0})$.
The subcategory $\DM(X)^{\wt \le 0}$ is generated -- under extensions, shifts $[n]$ with $n \le 0$, and arbitrary coproducts -- by objects of the form $f_! 1(n)[2n]$, where $f: T \r X$ is proper, $T$ is regular, and $n \in \Z$. Similarly $\DM(X)^{\wt \ge 0}$ is generated by these objects using shifts $[n]$ for $n \ge 0$ instead.
If $X$ is regular, $1_X$ is in the \emph{heart} $\DM(X)^{\wt = 0} = \DM(X)^{\wt \le 0} \cap \DM(X)^{\wt \ge 0}$ of this weight structure.
Moreover, $f^*$ and $f_!$ are weight-left exact (preserve ``$\wt \le 0$'') while $f_*$ and $f^!$ are weight-right exact (preserve ``$\wt \ge 0$''). The weight structure on compact objects is constructed in \cite[Thm 3.8]{Hebert:Structure} or \cite[Thm 2.2.1]{Bondarko:WeightsForRelative}. See \cite[Thm 2.2.1]{BondarkoSosnilo:Purely} for how to extend a weight structure on a stable \ii-category to its Ind-completion.\smallskip\\
xvi)
Let $\ell$ be a prime number invertible on $S$.
Then $S$ admits an \emph{$\ell$-adic realization functor}
$$\rho_\ell\co \DM(X) \r \D_\et(X, \Ql) := \Ind(\D^\bound_\cstr(X, \Ql))$$
taking values in the ind-completion of the bounded derived category of constructible $\Ql$-adic étale sheaves.
This functor is outlined in \refsect{realization}.
It commutes with the six functors $f_*, f^*, f_!, f^!, \t, \IHom$.
\xsyno

\rema
We emphasize that all work done here is with rational coefficients, cf. Synopsis xiii).
If we were to work with motives with integral coefficients, one can use the category of étale motives which also satisfies the properties listed in \refsy{motives}, except for xiii) and xv), cf.~\cite{Ayoub:Realisation, CisinskiDeglise:Etale}.
Using upcoming work of Spitzweck on $t$-structures on integral Tate motives, the constructions in this paper should carry over and produce a category of mixed motives on the affine Grassmannian $\MTM(\Gr, \Z)$ whose rationalization is $\MTM(\Gr,\Q)$.
\xrema

\subsubsection{h-descent and !-functoriality for non-separated maps}

\lemm
\label{lemm--descent.Beck.Chevalley}
Let $F: (\Sch_S^\ft)^\opp \r \Cat_\infty$ be a presheaf such that for any map $f \in \Sch_S^\ft$, $f^+ := F(f)$ has a left adjoint, denoted $f_+$.

Let now $f\co X \r Y$ be a map with \v Cech nerve $C^\bullet_f$, i.e., $C^n_f = X^{\x_Y n+1}$.
If $f$ is such that $f^+$ is conservative and satisfying the \emph{Beck-Chevalley condition}, i.e., such that for any cartesian diagram
$$\xymatrix{
X'' \ar[r]^{g'} \ar[d]^{f''} & X' \ar[r] \ar[d]^{f'} & X \ar[d]^f \\
Y'' \ar[r]^g & Y' \ar[r] & Y
}\eqlabel{Beck}$$
the natural transformation $f''_+ g'^+ \r g^+ f'_+$ is an equivalence, then $F$ satisfies descent with respect to $f$, i.e.,
the following natural functor is an equivalence of \ii-categories:
$$F(Y) \overset{\simeq}{\lr} \lim F(C_f^\bullet).$$
\xlemm

\pf
This is a slight reformulation of \cite[§ II.4.7]{GaitsgoryRozenblyum:StudyI}: the Beck-Chevalley condition for $F(C^\bullet_f)$ is satisfied by assumption, so that Proposition 7.2.2 (a) there carries over and shows that $F(Y)\to \lim F(C_f^\bullet)$ is the right adjoint of a monadic adjunction for the functor $(p_2)_+ p_1^+ : F(X) \r F(X)$, where $p_{i}: X \x_Y X \r X$, $i=1,2$ are the projections.
Again invoking the Beck-Chevalley condition, the proof of 7.2.2 (b) there carries over and shows that $f_+ : F(X) \rightleftarrows F(Y) : f^+$ is a monadic adjunction (using that $f^+$ is continuous and conservative).
Both monads are equivalent by invoking the assumption again.
\xpf

\theo
\label{theo--DM.descent}
The presheaves $\DM^*$ and $\DM^!$ are sheaves for the h-topology on $\Sch_S^\ft$, i.e., for an h-covering $f\co X \r Y$, the natural map
$$\DM(Y) \r \lim \DM^!(C^\bullet_f), \;\;\;\;\textup{(resp.}\;\;\DM(Y) \r \lim \DM^*(C^\bullet_f))$$
are equivalences, where $C^\bullet_f$ is the \v Cech nerve of $f$.
\xtheo

\pf
We apply \refle{descent.Beck.Chevalley}.
By \cite[Thm.~14.3.3]{CisinskiDeglise:Triangulated}, $f^*$ is conservative for any surjective map.
The same proof, dualized, shows the conservativity of $f^!$.
Since the h-topology is generated by Zariski coverings and proper coverings \cite[Thm.~8.4]{Rydh:Submersions}, it is enough to check the Beck-Chevalley condition in these cases separately.

We begin with $\DM^!$: in this case $f_!$ is left adjoint to $f^!$ for any separated map $f$ by \cite[Thm.~2.4.50]{CisinskiDeglise:Triangulated}.
If $f$ is proper, the Beck-Chevalley condition in \refle{descent.Beck.Chevalley} for $g'^!$ vs. $f'_! = f'_*$ holds true by base change.
If $f$ is a disjoint union of open embeddings, $f^! = f^*$ has the left adjoint $f_\sharp$, and again base change holds.
For the proper descent for $\DM^*$, we apply \refle{descent.Beck.Chevalley} to $\DM^\opp$, the opposite category. Then the Beck-Chevalley condition is satisfied by proper base change, i.e., the equivalence $g^* f'_* \r f''_* g'^*$ for $f$ proper.
\xpf

In \cite{Ayoub:Six1, Ayoub:Six2, CisinskiDeglise:Triangulated,Hoyois:Six,Khan:Motivic}, the existence and above-mentioned properties of $f_!$ and $f^!$ are stated for separated maps $f$ of finite type between schemes.
In \refth{f!.Artin}, which is applied in \refsect{intersection} to the stack of $G$-shtukas, we need the $!$-pushforward for Deligne-Mumford stacks locally of finite type.
The following application of Zariski descent allows to drop this hypothesis, similarly to \cite[Exam.~4.1.10]{LiuZheng:Enhanced}.

\prop
\label{prop--unseparated}
The adjunction \textup{\refeq{adjunction.shriek}} exists for any \textup{(}not necessarily separated\textup{)} map $f: X \r Y$ in $\Sch_S^\ft$ in a way such that properties iv\textup{)}, vi\textup{)}-x\textup{)}, xii\textup{)} and xvi\textup{)} in the above synopsis continue to hold.
\footnote{As for the weight structure in xv) we don't claim one can drop the separation hypothesis on $X$ since we do not know if de Jong's resolution of singularities can be extended to non-separated schemes.}
\xprop

\pf
Given $f$, we can pick open covers $X = \bigcup U_i$ and $Y = \bigcup V_j$ with $U_i, V_j \in \AffSch^\ft_S$ so that $f$ restricts to $U_i \r V_j$. The latter map is separated, since $U_i$ and $V_j$ are affine.
Let $X_\bullet$ and $Y_\bullet$ be the \v Cech nerves of these covers.
Each simplicial component $f_n$ of the map $f_\bullet : X_\bullet \r Y_\bullet$ is separated and of finite type.
\nts{If $U \r V$ is separated, then so is $U \x_X U \r V \x_Y V$: the monomorphism $U \x_X U \r U \x U$ is separated (true for any monomorphism), and $U \x U \r V \x U \r V \x V$ is also separated. Hence so is the map in question.}

By Zariski descent (\refth{DM.descent}), $\DM(X) = \lim_{\Delta^\opp} \DM^*(X_\bullet)$. (The * indicates that transition functors in the limit are *-pullbacks along the open immersions.)
Using that $(f_n)_!$ and $(f_n)^!$ commute with $j^*$ for an open immersion $j$, $f_!$ (and likewise for $f^!$) can be defined to be the unique functors so that the following diagram commutes:
$$\xymatrix{
\DM(X) \ar[d]^{f_!} \ar[r]^{j_n^*} & \DM(X_n) \ar[d]^{(f_n)_!} \\
\DM(Y) \ar[r]^{j_n^*} & \DM(Y_n).
}$$
Since (for a Noetherian scheme $X$) compactness of an object in $\DM(X)$ is a local condition on $X$, property vi) extends.
Property viii) extends since $j^*$ commutes with $\IHom$ \cite[Thm 2.4.50(5)]{CisinskiDeglise:Triangulated} and with $f_*$ and $f^*$.
The latter commutation also saves xii).
Part ix) carries over as one sees by using in addition that restriction along $\bigsqcup U_i \r X$ (for an open cover) creates colimits in $\DM$.
Likewise x) continues to hold by taking a covering, as above, of the map $g$ and considering its pullback along $f$.
Part xvi) again carries over by construction of $\D_\et(X, \Ql)$ in the non-separated case in \cite{LiuZheng:Enhanced} and the fact that $\rho_\ell$ commutes with $j^*$.
\xpf

\subsubsection{The $\ell$-adic realization}
\label{sect--realization}

With $S$ as in \refno{S}, fix a prime number $\ell$ which is invertible in $\calO_S$.
In order to discuss the $\ell$-adic realization we need to use a category of motives with $R$-coefficients, where $R$ is either $\Z$ or $\Z/\ell^n$ (as opposed to $R=\Q$ in \refsy{motives}).
By the work of Cisinski--Déglise (whose work extends similar results by Ayoub \cite{Ayoub:Realisation} to  more general base schemes), the bulk of the properties in \refsy{motives} also holds for the category $\DM_\h(X, R)$: i)--iii), v), xi) hold by construction in \cite[§5.1.1]{CisinskiDeglise:Etale},
iv), vii), ix), x), and xii) hold by \cite[Thm.~5.6.2]{CisinskiDeglise:Etale}.
As for vi), the identification of compact objects and constructible objects in $\DM(X, R)$ does hold by \cite[Thm.~5.6.4]{CisinskiDeglise:Etale} if (and by \cite[Rmk.~5.5.11]{CisinskiDeglise:Etale} also only if) the étale cohomological dimensions (for $R$-coefficients) of all residue fields of $X$ are finite. This excludes, say, $X = \Spec \Q$, which is a case we are interested in in this paper.
However, the following results in op.~cit. suggest the slogan that constructible motives should take over the role of compact objects: as for viii), the dualising functor $\Du$ is an involution \emph{for constructible h-motives} by \cite[Cor.~6.3.15]{CisinskiDeglise:Etale}, the remaining formulas in viii) hold (for arbitrary motives) by \cite[Thm.~5.6.2]{CisinskiDeglise:Etale}.
The identification of Hom-sets with K-theory in xiii) and the weight structure in xv) do not work for $R=\Z$ or $\Z/\ell^n$.
As for xiv), the proof of the h-descent property in \refth{DM.descent} carries over, the only remaining point being the conservativity of $f^*$ for surjective maps $f$. By \cite[Prop.~2.3.9]{CisinskiDeglise:Triangulated}, we only need to consider finite étale covers and finite surjective radicial maps $f$.
For the conservativity in the latter case use \cite[Prop.~6.3.16]{CisinskiDeglise:Etale}.
As is well-known, in the former case it holds by construction of $\DM_\h$: first embedd $\DM_\h$ in $\underline{\DM}_\h$ \cite[Def. 5.1.2]{CisinskiDeglise:Etale}.\nts{This commutes with $f^*$.}
which is defined as a Bousfield localization (implementing the $\P$-stabilisation) of a category of modules, denoted $R\mathrm{-mod}$, in symmetric sequences \cite[§7.2, §5.3.d]{CisinskiDeglise:Triangulated}
The left adjoint $f_\sharp$ of $f^*$ is a left Quillen functor with respect to the stable model structure on the module category $R\mathrm{-mod}$, hence the derived right adjoint of the Bousfield localization commutes with $f^*$.
The same argument applies for the $\A^1$-localization.
We hence reduce to modules in symmetric sequences of complexes of h-sheaves. The forgetful functor forgetting the module structure is again conservative and commutes with $f^*$, so we end up stating the conservativity for $f^*$ (for finite étale maps) on $\Shv_\h(\Sch / X)$, which is clear, noting that any h-sheaf is in particular an étale sheaf and vanishing of étale sheaves can be tested stalkwise.

By \cite[(5.4.1.c), Cor. 5.5.4, Thm.~6.3.11]{CisinskiDeglise:Etale}, the étale realization functor for $\ell$-torsion coefficients $\DM_{\h, \cstr}(X, \Z) \r \D^\bound_\cstr(X, \Z/\ell^n)$ takes values in the subcategory (always meant to be an \ii-category) of complexes of finite Tor-dimension with cohomology sheaves which are constructible in the sense of \cite[Exp.~IX, Déf.~2.3]{SGA4:3} and vanish in almost all cohomological degrees.
(At least if $\ell$ is odd and the $\ell$-cohomological dimension of all residue fields of $S$ is finite, it also results from \cite[Thm.~9.7]{Ayoub:Realisation} using the equivalence in \cite[Cor.~5.5.7, see also Rem.~5.5.8]{CisinskiDeglise:Etale}.)
The category $\D^\bound_\cstr(X, \Zl)$ is defined as the full subcategory of $\D(X, \Zl) := \lim \D(X, \Z/\ell^n)$ \cite[§0.1, §1.3]{LiuZheng:EnhancedAdic} of objects which are of finite Tor-dimension, bounded and constructible in each $\Z/\ell^n$-degree \cite[Def.~6.3.1]{LiuZheng:Enhanced}, \cite[7.2.18]{CisinskiDeglise:Etale}.
Taking the limit over $n$, the above realization functor takes values in $\D^\bound_\cstr(X, \Zl)$ \cite[Thm.~6.9]{Ayoub:Realisation}.
With rational coefficients, the categories $\DM_{\mathrm h}^\comp$ and $\DM^\comp$ as sketched in the beginning of \refsect{DM.schemes} are equivalent by \cite[Thm.~5.2.2, Cor.~5.5.7]{CisinskiDeglise:Etale}, so the rationalization of the $\Zl$-linear realization gives a functor $\DM(X)^\comp \to \D^\bound_\cstr(X, \Ql)$ \cite[Rem.~7.2.25]{CisinskiDeglise:Etale}.
Taking the ind-completion, we arrive at a functor
$$\rho_\ell\co \DM(X, \Q) = \Ind(\DM(X, \Q)^\cstr) \to \Ind(\D^\bound_\cstr(X, \Ql)) =: \D(X_\et, \Ql).$$

\theo
\label{theo--D.et!.sheaf}
Suppose $S$ satisfies the assumptions in \refno{S}. Then the presheaf
$$\D_\et^!(-, \Ql) : (\Sch_S^\ft)^\opp \to \Cat_\infty, X \mapsto \D_\et(X, \Ql), f \mapsto f^!$$
is a sheaf for the h-topology and likewise for $\D^\bound_\cstr(-, \Ql)$, $\D^\bound_\cstr(-, \Zl)$ and $\D^\bound_\cstr(-, \Z/\ell^n)$.
\xtheo

\pf
Note that $f^!$ preserves bounded constructible complexes, so the statement makes sense to begin with.
By Gabber's work \cite[Exp.~XIII, Thm.~4.2.3]{IllusieLaszloOrgogozo}, $f_*$ and therefore also $f_!$ preserves bounded constructible complexes for any map $f$ in $\Sch_S^\ft$.
This allows us to adapt the proof of \refth{DM.descent} to the presheaf given by $\D^\bound_\cstr(X, \Ql)$:
for a surjective map $f$, the conservativity of $f^!$ is reduced to $\D^\bound_\cstr(X, \Zl)$ and then to $\Z/\ell^n$-coefficients.
There, it is reduced to the conservativity of $f^*$ using that $f^! \Du = \Du f^*$, where $\Du$ denotes the dualizing functor, which is an involution on constructible complexes (\cite[Exp.~XVII, Thm.~0.9]{IllusieLaszloOrgogozo}, note that the assumptions on the base scheme there are weaker than those in \refno{S}).
The conservativity of $f^*$ holds since isomorphisms of étale sheaves are detected stalkwise at each geometric point.
The Beck-Chevalley condition holds, by proper and smooth base change (again, first for $\Z/\ell^n$-coefficients, which formally implies the one for $\Zl$-, and then $\Ql$-coefficients).

This shows the h-descent property for $\D^\bound_\cstr (X, \Ql)$.
In other words (see the proof of \refle{descent.Beck.Chevalley}), for an h-covering map $f: X \r Y$, $\D^\bound_\cstr(Y, \Ql) = \Mod_T (\D^\bound_\cstr(X, \Ql))$, where $T$ is the monad on the sheaf category on $X$ given by $f^! f_!$. The functor $f^! (:= \Ind(f^!)) : \Ind \D^\bound_\cstr(Y, \Ql) \r \Ind \D^\bound_\cstr(X, \Ql)$ is conservative by \refle{monadic.Ind}.
The Beck-Chevalley condition for the functors on the ind-completed categories follows formally from the one on $\D^\bound_\cstr(-, \Ql)$.
\xpf

\lemm
\label{lemm--monadic.Ind}
Let $T$ be a monad on an \ii-category $\calD$. Then
the right adjoint $\tilde U$ of the adjunction
$$\tilde F: \Ind \calD \rightleftarrows \Ind \Mod_T(\calD) : \tilde U$$
(obtained by applying $\Ind$ to the free-forgetful adjunction) is conservative.
\xlemm

\pf
It is enough to show that the image of $\tilde F$, which is the Ind-extension of the free $T$-module functor $F$, generates $\Ind \Mod_T(\calD)$ under colimits.
Indeed, any object in $\Ind \Mod_T(\calD)$ is the (filtered) colimit of objects in $\Mod_T(\calD)$, and any object $d \in \Mod_T(\calD)$ is the colimit of the diagram $\dots \rightrightarrows (FU)^2(d) \rightrightarrows FU(d)$.
\xpf

\subsection{Motives on prestacks}\label{sect--DM.prestacks}

In order to deal with motives on $L^+G\backslash LG/L^+G$, i.e., the double quotient of the loop group by the positive loop group, it is convenient to define motives on a large class of geometric objects, namely prestacks.
We follow the method proposed by Gaitsgory and Rozenblyum in the context of quasi-coherent and ind-coherent sheaves \cite[Ch.~3, 0.1.1]{GaitsgoryRozenblyum:StudyI} and by Raskin for $D$-modules \cite{Raskin:D-modules}.
Derived categories of sheaves on more general base `spaces' have also been used by Scholze \cite{Scholze:Etale} in his work on a six functor formalism for the derived category of étale sheaves on small v-stacks.

\subsubsection{Definitions and first properties}

Let $\AffSch_S^{\ft}$ be the category of schemes $X$ which are affine, and equipped with a map $X \r S$ of finite type.
Such schemes $X$ are Noetherian,
of finite Krull dimension,\nts{use $\dim R[x]=\dim R+1$ for Noetherian local ring}
and any map $X \r Y$ in $\AffSch^\ft_S$ is separated and of finite type.
Any (not necessarily affine) scheme $X \r S$ which is locally of finite type has a Zariski covering by objects in $\AffSch_S^\ft$.
The category $\AffSch_S^\ft$ is essentially small. Throughout, we will replace this category by a small skeleton containing the objects of interest to us. Fixing some regular cardinal $\kappa$, we embed $\AffSch_S^\ft$ into two larger \ii-categories:
$$\AffSch_S^\ft \subset \AffSch_S^\kappa \subset \PreStk_S^\kappa := \Fun((\AffSch_S^\kappa)^\opp,\text{\ii-}\Gpd).$$
The (ordinary) category in the middle consists of affine $S$-schemes which are obtained as $\kappa$-filtered limits of objects in $\AffSch_S^\ft$.
Equivalently, $\AffSch_S^\kappa = \Pro_\kappa (\AffSch_S^\ft)$ is the $\kappa$-pro-completion.
Again, this category is small.
Finally, the category of \emph{prestacks} on the right is the presheaf category in the \ii-sense \cite[§5.1]{Lurie:Higher} on this category.
It contains the category of affine schemes via the Yoneda embedding, and is generated under colimits by them, cf.~\cite[Cor.~5.1.5.8]{Lurie:Higher}.

We now define motives on prestacks adapting \cite[Def.~2.3]{Raskin:D-modules}.
Following \cite[Ch. 1, 0.6.11]{GaitsgoryRozenblyum:StudyI}, we write $\DGCat_\cont$ for the \ii-category of presentable, stable $\Q$-linear dg-\ii-categories with continuous \textup{(}i.e., colimit-preserving\textup{)} functors.

\defi
\label{defi--DM.prestacks}
The functor
$$\DM^!_\kappa\co (\AffSch^\kappa_S)^\opp \r \DGCat_\cont$$
is the left Kan extension of the functor $\DM^!_\kappa\co (\AffSch_S^{\ft})^\opp \r \DGCat_\cont$ mentioned in \refsy{motives}, ii).
The functor
$$\DM^!_\kappa\co (\PreStk_S^\kappa)^\opp \r \DGCat_\cont$$
is the right Kan extension of this functor along the Yoneda embedding.
\xdefi

The Kan extensions exist by \cite[Thm.~5.1.5.6]{Lurie:Higher}
since $\DGCat_\cont$ is bicomplete \cite[§I.1, Cor.~5.3.4]{GaitsgoryRozenblyum:StudyI},
and the category $\AffSch^\kappa_S$ is small (this is the purpose of the size restriction using $\kappa$). Choosing a larger cardinal $\kappa' > \kappa$, the restriction of $\DM^!_{\kappa'}$ on $(\PreStk^{\kappa'}_S)^\opp$ to $(\PreStk^{\kappa}_S)^\opp$ is equivalent to $\DM^!_{\kappa}$.
In this sense, the choice of $\kappa$ does not matter as long $\kappa$ is large enough so that $\AffSch_S^\kappa$ contains all affine schemes of interest to us.
For the purposes of this paper, it is enough to choose $\kappa$ to be the countable cardinal, since the only non-finite type objects we encounter are countably indexed. We now fix $\kappa$ throughout the document, and drop it from the notation.

\rema\label{rema--DM.prestacks}
i) For a prestack $X$, $\DM(X) = \lim_{T \r X} \DM^!(T)$, where the limit is over the category of $S$-maps $T\to X$ for any $T \in \AffSch_S$.
Thus, a motive $M$ on $X$ can be thought of system of motives $M_f$ for any $T \stackrel f \r X$, compatible under !-pullback (and these compatibilites are subject to higher coherence conditions).
Next, $\DM(T) = \colim_{T \r T'} \DM^!(T')$, where the colimit is over the category of $S$-maps $T\to T'$ for any $T' \in \AffSch_S^\ft$.
The (co)limits are taken in $\DGCat_\cont$ and are formed using  $!$-pullbacks as transition functors.
The inclusion $\DGCat_\cont\subset \Cat_\infty$ preserves limits, so that the limit above, and also the one in \refth{DM.descent} can be taken in $\DGCat_\cont$ or $\Cat_\infty$.
The colimit is taken in $\DGCat_\cont$. It can be computed as
$$\colim_{\DGCat_\cont} \DM(T') \;=\; \Ind (\colim_{\Cat_\infty} \DM(T')^\comp).\eqlabel{colim.DGCat}$$
This is a special case of
\cite[Ch.~1, Cor.~7.2.7]{GaitsgoryRozenblyum:StudyI}, according to which
$$\colim_{\DGCat_\cont} \Ind (\C_i) = \Ind (\colim_{\Cat_\infty} \C_i).\eqlabel{colim.DGCat.general}$$
Thus, a compact object $M \in \DM(T)$ can be thought of as being pulled back from some object in $\DM(T')$ for some $T \r T'$ with $T' \in \AffSch_S^\ft$.
\smallskip\\
ii) By construction, there is a functor $f^!\co \DM(Y) \r \DM(X)$ for \emph{any} map of prestacks.\smallskip\\
iii\textup{)} Let $X = \colim_{i \in I} X_i$ be a colimit of prestacks.
(Here and elsewhere, unless otherwise mentioned, (co)limits are meant in the \ii-categorical sense, also called homotopy (co)limits.) The universal property of the category of \ii-presheaves \cite[Thm.~5.1.5.6]{Lurie:Higher} yields an equivalence
$$\DM(X) \;=\; \lim_{i \in I} \DM^!(X_i).$$

\noindent iv\textup{)} It is possible to left/right Kan extend the presheaf $\DM^*$ in \refsy{motives}, ii) to $\PreStk_S$.
However, the computation of motives on an ind-scheme or more generally an ind-Artin stack $X$ in \refco{Ind.Artin.co.limit} only works for $\DM^!$. We will therefore rarely consider the presheaf $\DM^*$, and will soon just write $\DM$ instead of $\DM^!$.
There are, however, cases of prestacks $X$ where $\DM^*(X)$ and $\DM^!(X)$ are equivalent.
To give just one example, consider $X = \A^\infty_S = \lim \big (\dots \stackrel p \r \A^2_S \stackrel p \r \A^1_S \stackrel p \r S \big)$, where the maps are the standard projection maps.
Then $\bbA^\infty_S\to S$ is an $S$-affine scheme which is pro-($S$-smooth).
There is an equivalence
$$\DM^! (\A^\infty_S)^\comp = \colim_{\Cat_\infty} \left (\DM(S) \stackrel{p^!} \r \DM(\A^1_S) \stackrel{p^!} \r \dots \right ),$$
and likewise with ! exchanged by * throughout. Using the natural equivalence $p^* (-) \t p^! 1 \stackrel \cong \Rightarrow p^! (-)$ and the $\t$-invertibility of $p^! 1 = 1(1)[2]$, we get an equivalence of the above with $\DM^*(\A^\infty)^\comp$, which implies the claim by passing to the ind-completion.
\smallskip\\
v) For $X \in \Sch_S^\ft$, $\DM(X)$ as recalled in \refsy{motives} agrees with $\DM(X)$ as given by \refde{DM.prestacks}: if $X$ is affine, this is tautological, in general it holds by Zariski descent.
Likewise the a priori ambiguity for $\DM(X)$ for $X \in \AffSch_S$ Noetherian and of finite Krull dimension (but not necessarily of finite type over $S$) is harmless, since $\DM(X)$ as discussed in \refsy{motives} is equivalent to the $\DM(X)$ obtained by \refde{DM.prestacks}. This is a consequence of the continuity of $\DM$ (on finite type $S$-schemes) \cite[Prop.~14.3.1]{CisinskiDeglise:Triangulated} and \refre{DM.prestacks} i).
\smallskip\\
vi)
Let $X : I \to \Sch$ be a diagram of (Noetherian, finite-dimensional) schemes.
Ayoub \cite{Ayoub:Six1} constructs the stable homotopy category $\SH(X)$.
Ayoub's approach is based on the category $\Sm / X$ which he defines as the \emph{lax} colimit of the categories $\Sm / X_i$ ($i \in I$).
We did not investigate the precise relation, but one might speculate that once we replace the lax colimit by the colimit and run the usual steps in the construction of $\SH$ (taking presheaves, $\A^1$- and Nisnevich localization, $\P$-stabilization), we would get a category equivalent to $\SH^*(X_i) := \colim \SH(X_i)$, where the transition functors are given by the *-pullbacks along the maps in the diagram $X$.
\xrema

\rema
\label{rema--prestacks.examples}
Any functor $(\AffSch_S)^\opp \r \Sets$ is a prestack by regarding it as a simplicially constant presheaf.
This includes schemes, ind-schemes (for which see \refsect{DM.ind-schemes}), and also (ind-)algebraic spaces.
Likewise, any functor $(\AffSch_S)^\opp \r \Gpd$ to classical groupoids is a prestack by identifying classical groupoids with 1-truncated spaces.
This includes (ind-)Deligne-Mumford or (ind-)Artin stacks.
More generally, $n$-geometric stacks \cite[§1.3]{ToenVezzosi:HomotopicalII} (and again, their ind-variants) are prestacks as well.
\xrema

\subsubsection{Equivariant motives}
The category $\PreStk=\PreStk_S$ carries a cartesian monoidal structure.
Group objects in $\PreStk$ in the sense of \cite[Def.~7.2.2.1]{Lurie:HA} are referred to as \emph{group prestacks}.
The monoidal \ii-category $\PreStk$ is the underlying \ii-category of the model category $\calP \defined \sPSh(\AffSch_S)$ of simplicial presheaves on $\AffSch_S$, equipped with the injective model structure and the pointwise product.
This forms a combinatorial, symmetric monoidal monoidal model category in which all objects are cofibrant.
Thus for a non-symmetric colored operad $O$ in $\calP$, the \ii-category of $O$-algebras in the \ii-sense is presented by the model category of $O$-algebras in the strict sense \cite[Thm.~2.15]{Haugseng:Rectification}.
In particular, any group object $G \in \PreStk$ in the sense of \cite[Def.~7.2.2.1]{Lurie:HA} can be strictified and, for given $G$, any action of $G$ on some $X \in \PreStk$ can also be strictified.

The only group prestacks we need in this paper (e.g.~the positive loop group $L^+G$), are ordinary presheaves of (discrete) groups, which are regarded as simplicially constant prestacks, and therefore group objects in $\PreStk$. However, other interesting examples such as the Picard groupoid operating on the moduli stack of vector bundles on a curve, can also be treated using the notion of group prestacks.

\defi
\label{defi--DM.G}
Suppose a group prestack $G$ acts on the left on a prestack $X$.
We write $G \backslash X$ for the homotopy colimit of the $G$-action or, equivalently, the image of $X$ under the left adjoint to the functor $\PreStk \r \PreStk_G$ which equips a prestack with a trivial $G$-action.
The category of \emph{$G$-equivariant motives on $X$} is defined as
$$\DM(G \backslash X).$$
\xdefi

It is well-known (see, e.g., \cite[Exam.~IV.1.10]{GoerssJardine:Simplicial}) that $G \backslash X$ can be computed as the (homotopy) colimit of the  {\it bar construction} $\BarC(G, X)$ which is the semi-simplicial prestack built out of action maps and projection maps:
$$\BarC(G, X) \defined \left(\xymatrix{
	\dots \ar@<-1.5ex>[r] \ar@<-0.5ex>[r] \ar@<0.5ex>[r] \ar@<1.5ex>[r]
        &
	G \x_S G \x_S X
	\ar@<-1ex>[r]
	\ar[r]
	\ar@<1ex>[r]
	&
	G \x_S X
	\ar@<0.5ex>[r]^{\phantom{hh}a}
	\ar@<-0.5ex>[r]_{\phantom{hh}p_2}
	&
	X
	}\right).$$
This characterization of $G \backslash X$ and \refre{DM.prestacks} iii) yield the following result:

\lemm
\label{lemm--DM.G.BarC}
If a group prestack $G$ acts on a prestack $X$, there is an equivalence
$$\DM(G \backslash X) \stackrel \cong \lr \lim \DM^!(\BarC(G, X)). \eqlabel{Bar.descent}$$
\xlemm
\qed

Colloquially speaking, a $G$-equivariant motive is a collection of objects $M_n \in \DM(G^n \x_S X)$, $n\in \Z_{\geq 0}$ together with equivalences $a^! M_0 \stackrel \simeq \lr M_1$, $p_2^! M_0 \stackrel \simeq \lr M_1$ etc.; one for any map in the bar construction, subject to the natural compatibility conditions induced by the relations between the maps.

As was indicated in \refre{DM.prestacks}.iv), it is possible to apply the same considerations to the presheaf $\DM^*$ instead of $\DM^!$, in which case a $G$-equivariant motive would amount to specifying equivalences $a^* M_0 \cong M_1$ etc.
If $G$ is a smooth algebraic group, the two notions agree, up to equivalence, by the relative purity isomorphism \refeq{relative.purity}, cf.~also the discussion in \refre{classical.equivariant}.

The next result shows that the existence of adjoint functors on prestack quotients can be checked on the prestacks themselves.

\lemm
\label{lemm--functoriality.equivariant}
Let $G$ be a group prestack, and let $f\co X \r Y$ be a $G$-equivariant map of prestacks.
Let $\ol f\co G \backslash X \r G \backslash Y$ be the induced map.
If $f^!\co \DM(Y) \r \DM(X)$ has a left \textup{(}resp.~right adjoint\textup{)}, then the same is true for $\ol f^! \co \DM(G \backslash Y) \r \DM(G \backslash X)$.
This process can be iterated, for example if the left adjoint $f_!$ of $f^!$ has another left adjoint, the same holds for the left adjoint $\ol f_!$ of $\ol f^!$.
\xlemm

\pf
The categories $\DM(\str)$ being presentable, we can use the adjoint functor theorem \cite[Cor 5.5.2.9]{Lurie:Higher} to construct adjoints.
The forgetful functor $\DM(G \backslash X) \r \DM(X)$ is conservative and preserves (co)limits and therefore also creates them.
Thus, the preservation of (co)limits of a functor between categories of $G$-equivariant motives can be checked after forgetting the $G$-equivariance, so that the existence of the adjoints in the non-equivariant setting shows the claim.
\xpf

Totaro \cite{Totaro:Chow, Totaro:Motive} and Edidin--Graham \cite{EdidinGraham:Equivariant} have introduced \emph{equivariant higher Chow groups}.
We now show that the above equivariant category of motives reproduces these groups (with rational coefficients).
To focus on the essential point, suppose $X$ is a smooth finite type scheme of dimension $n$ over a field $k$, equipped with an action of a smooth affine $k$-group scheme $G$ so that the one of the assumptions (1)--(3) in \cite[Prop.~23]{EdidinGraham:Equivariant} is satisfied.
Let $s, t \in \Z$.
These assumptions ensure that we may choose a $G$-representation $V$ (viewed as an affine $k$-scheme) and an open subscheme $j\co U \subset V$ on which $G$ acts freely such that the reduced complement $i\co Z \r V$ satisfies $c := \codim_X Z > s$ and such that the quotient $(X \x U) / G$
exists in the category of schemes.
Let $l := \dim V/k$, $g := \dim G/k$ denote the Krull dimensions.
In this case, the definition in op.~cit. is in terms of Bloch's higher Chow groups
$$\CH_{n-s}^G(X, t) \defined \CH_{n+l-s-g}((X \x U) / G, t).$$

\theo
\label{theo--equivariant.Chow}
In the above situation, there is an isomorphism
$$\Hom_{\DM(X/G)}\big(1, 1(s)[2s-t]\big) = \CH_{\dim X-s}^G(X, t) \t_\Z \Q, \ s, t \in \Z.$$
\xtheo

\pf
The freeness assumption ensures that the (homotopy) quotient $(X \x U) / G$ computed in the \ii-category of prestacks agrees with the ordinary quotient.
Let $p\co X \x V \r V$ be the projection.
Using \refle{functoriality.equivariant} (including the notation $\ol i := (i / G)$ for the quotients etc.), there is a localization cofiber sequence $\ol i_* \ol i^! \r \id \r \ol j_* \ol j^!$.
By homotopy invariance, $p^*$ and therefore $\ol p^*$ is fully faithful.
In order to show that
$$\Hom_{X/G}\big(1, 1(s)[2s-t]\big) \stackrel[\cong]{\ol p^*} \r \Hom_{(X \x V)/G}\big(1, 1(s)[2s-t]\big) \stackrel{\ol j^*} \r \Hom_{(X \x U) / G}\big(1,1(s)[2s-t]\big)$$
is an isomorphism, we need to show that the groups $\Hom_{(X \x Z)/G}\big(1, \ol i^! 1(s)[r]\big)$ vanish for all $r \in \Z$.
It is enough to show the same vanishing in $\DM(X \x Z \x G^a)$ for all $a \ge 0$.
Since the regular locus in $Z$ is nonempty and open \StP{07R5}, there exists by Noetherian induction a stratification of $Z$ by regular subschemes (whose codimension in $V$ can only grow), and hence we may assume $Z$ to be regular (use induction and the localization sequence).
In this case we have $i^! 1 = 1(-c)[-2c]$ by absolute purity
and the group $\Hom_{\DM(X \x Z \x G^a)}\big(1, 1(s-c)[r-2c]\big)$ vanishes since $s-c < 0$.
As $X \x U$ is regular, so is $(X \x U) / G$.
We then conclude using $\Hom_{X \x U / G}\big(1, 1(s)[2s-t]\big) = \CH^s(X \x U / G, t) \t \Q = \CH_{n + l - g - s}(X \x U / G, t) \t \Q$.
\xpf

For a map of group prestacks $\pi\co H \r G$ and $G$ acting on a prestack $X$ (hence $H$ acts via $\pi$), there is a \emph{restriction functor} $\pi^!\co \DM(G \backslash X) \r \DM(H \backslash X)$.
In the above description, this functor sends a $G$-equivariant motive $(M_n)$ to the $!$-pullback along $H^{\x_S n} \x_S X \r G^{\x_S n} \x_S X$ in the $n$-th component.

\nts{It is true, but not needed that in the finite type case $\BarC(G, X)$ is the \v Cech nerve of the smooth cover $X \r G \setminus X$.
We recall this well-known fact for the slightly simpler case of simplicial sets (not simplicial presheaves).
To verify $X \x^h_{G \setminus X} X = G \x X$ we replace $X$ by the weakly equivalent $EG \x X$.
Here $EG$ is a contractible space which is projectively cofibrant as a $G$-simplicial set.
According to \cite[Lemma V.2.4]{GoerssJardine:Simplicial}, each simplicial term $(EG)_n$ is a free $G$-set.
We can then compute the homotopy quotient $G \setminus X$ as the ordinary quotient $EG \x^G X$.
Moreover, for a simplicial set $S$ (such as $S = EG \x X$) on which $G$ acts freely in each simplicial component, the map $S \r S/G$ (ordinary quotient) is a fibration \cite[Cor.~V.2.7]{GoerssJardine:Simplicial}, so the homotopy fiber $X \x^h_{G \setminus X} X = (EG \x X) \x^h_{EG \x^G X} (EG \x X) = S \x^h_{S/G} S$ can be computed as an ordinary fiber product.
Again using the termwise freeness of the $G$-action, the natural map $G \x S \r S \x_{S/G} S$, $(g,s) \mapsto (s, gs)$ is an isomorphism of simplicial sets.
This finishes the computation of $X \x^h_{G \setminus X} X$, the ones for the higher terms in the bar complex are identical.
}

The next result serves to cut down the size of $G$ from certain pro-algebraic to algebraic groups.
For our conventions on strictly pro-algebraic groups, we refer the reader to \refsect{algebraic.grps}.

\prop
\label{prop--DM.G.homotopy.invariant}
Let $G = \lim G_i$ be a strictly pro-algebraic $S$-group such that $U:=\ker (G \stackrel \pi \r G_0)$ is split pro-unipotent \textup{(}by \refde{unipotent} a possibly countably infinite successive extension of vector groups\textup{)}.
Suppose $G_0$ acts on an $S$-scheme $X$.
Then the restriction functor $\DM(G_0\backslash X) \r \DM(G\backslash X)$ is an equivalence.
\xprop

\pf
We apply \refle{lim.equivalence} below to $\pi^!\co \DM^!(\BarC(G_0, X)) \r \DM^! (\BarC(G, X))$ which in the $m$-th simplicial level is the $!$-pullback along $\pi^{\x m}\co G^{\x m} \x X \r G_0^{\x m} \x X$.
Replacing $G_0$ by $G_0^m \x X$ (and $S$ by $X$) etc., it remains to show that $\pi^!\co \DM(G_0) \r \DM(G)$ is fully faithful.
By assumption, the map $\pi\co G\to G_0$ is a torsor under the split pro-unipotent group $U$.
Since the claim is Zariski local on $S$, we may assume that this torsor is trivial by \refpr{unipotent}, so that the map $\pi$ is on the underlying schemes isomorphic to the projection $G_0\x U\to G_0$.
Replacing $G_0$ by $S$, it remains to show that $\pi^!\co \DM(S) \r \DM(U)$ is fully faithful.
By \refde{unipotent}, we can write $U=\lim U_i$ where $\ker(U_{i+1}\to U_i)=\bbV(\calE_i)$ is a vector group for some $S$-vector bundle $\calE_i$, see \refeq{VE} for notation.
By \refde{DM.prestacks}, we have $\DM(U)=\colim \DM(U_i)$, and $\pi^!$ is the canonical functor into this colimit.
Arguing as before, each transition functor is the $!$-pullback from a vector bundle, and thus fully faithful by homotopy invariance of $\DM$, cf.~\refsy{motives}, xi).
Using the equivalence $\colim_{\DGCat_\cont} \DM(U_i) = \Ind (\colim_{\Cat_\infty} \DM(U_i)^\comp)$ from \refre{DM.prestacks} i), the full faithfulness of $\pi^!$ follows from the one of $\DM(S)^\comp \r \colim_{\Cat_\infty}(\DM(U_i)^\comp)$, which in turn follows from the description of the filtered colimit of \ii-categories in \cite{Rozenblyum:Filtered} and the full faithfulness of the $\pi_i^!$.
%
\xpf

\lemm\textup{(\cite[Lem.~B.6]{BunkeNikolaus:Twisted})}
\label{lemm--lim.equivalence}
Let $C, C': \Delta \r \Cat_\infty$ be two cosimplicial \ii-categories. Let $F: C \r C'$ be a natural transformation between them, such that each $F_n$ is fully faithful and $F_0$ is an equivalence.
Then $\lim_\Delta C \r \lim_\Delta C'$ is an equivalence.
\nts{Proof idea if we additionally assume that $F_n$ have a right adjoint $G_n$: $F_n$ being fully faithful means $\id \gets G_n F_n$ is an equivalence.
We must show that $F_n G_n A \gets A$ is an equivalence for any object $A \in C(n)$ which is of the form $A' = a'(A'_0)$, where $a': C'(0) \r C'(n)$ is induced by a(ny) map $[0] \r [n]$. By assumption we may choose $A' = F_0(A)$.
So we indeed get $F_n G_n a'(A'_0) = F_n G_n F_n (A) = F_n(A) = a'(A'_0)$.}
\xlemm

\exam \label{exam--affine.proj}
i) For a split unipotent $S$-group scheme $U$ (i.e., a smooth affine $S$-group scheme which is a successive extension of vector groups) acting trivially on $X$, \refpr{DM.G.homotopy.invariant} shows that $\DM_{U}(X)= \DM(X)$. \smallskip\\
ii) Proposition \ref{prop--DM.G.homotopy.invariant} applies to any parahoric subgroup $\calP=\on{lim}_i\calP_i$ of $LG$ by \refle{affine.proj} below. More generally, it applies to any pro-algebraic group which is constructed as a positive loop group along some Cartier divisor, cf.~\refpr{vector.extension} and \refex{groups} below.
\xexam

\subsubsection{Descent for motives}
We now study consequences of descent.
Let $\tau$ be a Grothendieck topology on $\AffSch_S$ such that each covering family has a refinement by a finite covering family.
We now fix $S$, and write $\Stk^\tau=\Stk_S^\tau$ for the category of $\tau$-stacks. By definition \cite[Def.~6.2.2.6]{Lurie:Higher}, this is the full subcategory of $\PreStk$ of objects $X\co (\AffSch_S)^\opp\r \infty\str \Gpd$ which commute with finite coproducts, and such the natural map
\begin{equation}\label{sheaf_condition}
X(T)\;\r\; \lim \left( \xymatrix{X(U) \ar@<-0.5ex>[r] \ar@<0.5ex>[r] & X(U\x_T U) \ar@<-1ex>[r] \ar[r] \ar@<1ex>[r]  &  X(U\x_T U\x_T U) \ar@<-0.5ex>[r] \ar@<0.5ex>[r] \ar@<-1.5ex>[r] \ar@<1.5ex>[r] &  \dots} \right)
\end{equation}
is an equivalence for all $\tau$-covers $U\r T$ in $\AffSch_S$. We denote the \emph{sheafification} (or localization) functor
$\PreStk \r \Stk^\tau$ by $X \mapsto X^\tau$ which is left adjoint to the inclusion $\Stk^\tau\subset \PreStk$, cf.~\cite[Prop.~6.2.2.7]{Lurie:Higher}.

We are interested in topologies $\tau$ contained in the $h$-topology for which we recall the definition from \cite[Def.~8.1]{Rydh:Submersions}.

\defi The $h$-topology on $\AffSch_S$ (resp.~on $\Sch_S$) is the minimal Grothendieck topology generated by the following covering families
\begin{itemize}
\item Families of open immersions $\{U_i \stackrel {f_i}\r T\}$ such that $T=\cup f_i(U_i)$.
\item Finite families $\{U_i\stackrel {f_i} \r T\}$ such that $\sqcup f_i\co \sqcup U_i\r T$ is {\em universally subtrusive} (i.e., a $v$-cover) and of finite presentation.
\end{itemize}
\xdefi

By \cite[Thm.~8.4]{Rydh:Submersions}, any $h$-covering $\{U_i\r T\}$ of a quasi-compact scheme can be refined by a finite covering family $\{V_i\r T\}$. Thus, as each object in $\AffSch_S$ is quasi-compact, the $h$-topology on $\AffSch_S$ is generated by finite covering families.


\theo
\label{theo--descent.prestacks}
Suppose $\tau$ is a Grothendieck topology which is contained in the $h$-topology, and such that each covering family admits a refinement by a finite covering family \textup{(}e.g., the fppf, étale or Zariski topology, but also the qfh, cdh or h topology\textup{)}.
For any prestack $X\in \PreStk_S$, $!$-pullback along the natural map $X\to X^\tau$ yields an equivalence
$$\DM(X^\tau) \stackrel \cong \lr \DM(X).\eqlabel{DM.tau}.$$
\xtheo

\pf
Since the map $X\to X^\tau$ is an equivalence after $h$-sheafification, we may assume that $\tau$ equals the $h$-topology. Now let $T\in \AffSch_S$, and suppose that $u: U\r T$ is an $h$-cover of schemes with $U$ being quasi-compact. We claim that the natural map
$$
\DM(T)\;\r\; \lim\DM(C^{\bullet}_u)\eqlabel{claim1}
$$
is an equivalence, where $C^\bullet_u$ is the \v Cech nerve of $u$. If $T$ (and hence $U$) is of finite type over $S$, this is a special case of \refth{DM.descent}.
Now the proof in \cite[Prop.~3.12.1]{Raskin:D-modules} carries over. We briefly recall Raskin's argument: as $U\r T$ is an $h$-cover of qcqs schemes, one may treat the case of proper surjective morphisms, and finitely presented Zariski coverings separately, cf.~\cite[Thm.~8.4]{Rydh:Submersions}.
One shows that $u^!$ has a left adjoint $u_!$ (for $u$ proper), respectively a right adjoint denoted $u_*$ (for $u$ being a quasi-compact Zariski covering map).
The existence of these adjoints follows from proper, resp. smooth base change for finite type schemes.
Moreover, these adjoints $u_!$ (resp. $u_*$) satisfy base change with respect to $f^!$ for arbitrary maps of qcqs schemes.
Then, the Beck-Chevalley type argument already used in the proof of \refth{DM.descent} finishes the proof of \refeq{claim1}.

The universal property of the sheaf category $\Stk^\tau_S$ \cite[Prop.~5.5.4.15, Def.~6.2.2.6]{Lurie:Higher} states that it is the localization of $\PreStk_S$ with respect to maps $\colim C^\bullet_u \r T$ from the \v Cech nerve of $h$-coverings $u$ as above, and with respect to finite coproducts $\coprod j(T_i) \r j(\coprod T_i)$, where $j\co \AffSch_S \r \PreStk_S$ is the Yoneda embedding and $T_i \in \AffSch_S$.
By \refeq{claim1}, the functor $\DM^!\co \PreStk_S^\opp \r \DGCat_\cont$ sends the map $\colim C^\bullet_u \r T$ to equivalences.
Moreover, $\DM$ sends (finite) coproducts of affine schemes to products, so that $\DM$ factors over the sheafification $X \mapsto X^\tau$.
\xpf

\exam
Let $f \co X \r Y$ be a schematic $h$-covering of prestacks, i.e., for any affine scheme $T \r Y$, $X \x_Y T$ is a scheme, which is an $h$-cover of $T$ in the usual sense.
Let $C_f^\bullet$ be the \v Cech nerve of $f$.
Then there is an equivalence$$\DM^!(Y) \stackrel \cong \lr \lim \DM^!(C_f^\bullet). \eqlabel{DM.Cech}$$
Indeed, this follows from \refre{DM.prestacks} iii) and \refth{descent.prestacks} using that $\colim C_f^\bullet\r Y$ is an equivalence after $\tau$-sheafification.
\xexam

For later use we record the following lemma.

\lemm
\label{lemm--DM.G/H}
Let $\tau$ be a Grothendieck topology as in \refth{descent.prestacks}.
Let $H\subset G$ be an inclusion of group $\tau$-stacks, and consider the quotient $X := (G/H)^\tau$.
Let $e : S / H \r G \backslash X$ be the map in $\PreStk$ induced by the base point $S \r X$.
Then there is an equivalence
$$e^!\co \DM(G \backslash X) \r \DM( S/H).$$
\xlemm

\pf
The map $G/H \r X$ is an equivalence after $\tau$-stackification, hence so is
$e\co S / H = G \backslash G / H \r G \backslash X$.
The lemma follows from \refth{descent.prestacks}.
\xpf

In the definition of $G \backslash X$, there is no consideration of a topology on $\AffSch_S$.
Consequently, $G \backslash X$ is only a prestack.
Now suppose $G\in \Stk^\tau$ for some Grothendieck topology $\tau$. The category of $\tau$-stacks with $G$-action $\Stk_G^\tau$ is the full subcategory of $\PreStk_G$ whose objects are $\tau$-stacks.
The following general statement about principal bundles in an \ii-topos is well-known, see e.g. \cite[Prop.~3.7]{NikolausSchreiberStevenson:Principal} for the implication i)$\Rightarrow$ii).

\lemm \label{lemm--torsors}
Let $X\in\Stk^\tau$ equipped with the trivial $G$-action. The following data are equivalent:\smallskip\\
i\textup{)} An object $P\in \Stk_G^\tau$ together with an equivalence $\al\co (G\backslash P)^\tau\simeq X$.\smallskip\\
ii\textup{)} An effective epimorphism $p\co P\to X$ in $\Stk_G^\tau$ such that $G\x P\simeq P\x_XP$, $(g,p)\mapsto (g p,p)$ is an equivalence.\smallskip\\
In this case, the object $P\to X$ is called a $G$-torsor in the $\tau$-topology.
\xlemm
\pf Let $P\to X$ be any map in an $(\infty,1)$-topos with \v Cech nerve $P^\bullet$. By \cite[Cor.~6.2.3.5]{Lurie:Higher}, $P\to X$ is an effective epimorphism if and only if the canonical map $\colim P^\bullet\to X$ is an equivalence. Now consider a pair as in i). Being a left adjoint, $\tau$-sheafification commutes with colimits so that $\pi\co P\to (G\backslash P)^\tau$ is an effective epimorphism (the \v Cech nerve of $P\to G\backslash P$ is the bar construction, and $G\backslash P$ its colimit). Hence, passing to the \v Cech nerve of $\pi$ the equivalence $\al$ induces on the first simplicial level the equivalence $G\x P\simeq P\x_XP$. Conversely, having $P\to X$ as in ii),
we use that both the bar construction and the \v Cech nerve are 1-coskeletal and conclude that all simplicial levels in the \v Cech nerve are equivalent to the bar construction. Taking colimits, we find a pair as in i).
\xpf

A morphism between $G$-torsors $P\to P'$ over $X$ is a morphism in the slice category $\Stk_G^\tau/X$. A $G$-torsor equivalent to $G\x X$ is called trivial. Clearly, this happens if and only if $P\to X$ admits a section (use the equivalence in \refle{torsors} ii)).

\lemm\label{lemm--loc.triv}
Let $T$ be an affine scheme. Then any $G$-torsor $P\to T$ in the $\tau$-topology is $\tau$-locally on $T$ trivial.
\xlemm
\pf
View $T\simeq (G\backslash P)^\tau$ as an element in $s_T\in (G\backslash P)^\tau(T)$. By the description of sheafification in \cite[§6.5.3, p.~673 and proof of Prop.~6.2.2.7]{Lurie:Higher}, there exists a $\tau$-cover $T'\to T$, and a lift $s_{T'}$ to $G(T')\backslash P(T')$, and thus $P(T')\not = \varnothing$. This shows that the base change $P_{T'}\to T'$ is trivial.
\xpf

\coro \label{coro--equivalence.torsors}
Let $X$ be any $\tau$-stack. Then every morphism of $G$-torsors $P\to P'$ over $X$ is an equivalence.
\xcoro
\pf
This can be tested after pullback $T\to X$ for $T\to \AffSch_S$. By descent, we may work $\tau$-locally on $T$. By \refle{loc.triv}, we may assume that both torsors $P, P'$ are trivial. Then the corollary is clear.
\xpf

Similarly to \StP{04WL}, 
we consider for each $T\in \AffSch_S$ the \ii-category $[G\backslash X]^\tau(T)$ defined as the full subcategory of the slice $\Stk^\tau_G/X\x T$ of those objects $b\x a\co P\to X\x T$ where $a$ is a $G$-torsor in the $\tau$-topology.
By \cite[Cor.~2.4.7.12]{Lurie:Higher}, the formation $T \mapsto \Stk^\tau_G / X \x T \in \Cat_\infty$ is functorial in the $\infty$-sense, so that $[G\backslash X]^\tau$ is a presheaf of \ii-categories, and by \refco{equivalence.torsors} it takes values in \ii-groupoids.
Also being a $\tau$-sheaf, we see that $[G\backslash X]^\tau$ defines $\tau$-stack.

\prop\label{prop--sheafification.iso}
Let $X\in\Stk_G^\tau$. In the above situation,
the natural map given by the trivial $G$-torsor on $X$,
$$X \r [G\backslash X]^\tau,$$
is a $G$-torsor in the $\tau$-topology, so that \textup{(}by \refle{torsors}\textup{)} $(G\backslash X)^\tau=[G\backslash X]^\tau$.
In particular, if $\tau$ is contained in the $h$-topology \textup{(}\refth{descent.prestacks}\textup{)}, there is an equivalence
$$\DM([G\backslash X]^\tau) \stackrel \cong \lr \DM(G \backslash X).$$
\xprop
\pf
We check \refle{torsors} ii). Clearly, $G\x X\simeq X\x_{[G\backslash X]^\tau}X$, and it remains to show that $X \r [G\backslash X]^\tau$ is an effective epimorphism. By \cite[Prop.~7.2.1.14]{Lurie:Higher}, this can be tested on the $0$-trucation, i.e., on its underlying $1$-topos. But a map of ordinary $\tau$-sheaves is an effective epimorphism if it is an epimorphism. Thus, \refle{loc.triv} implies the proposition.
%
\xpf

\rema
We apply \refpr{sheafification.iso} to quotients like $L^+G\backslash LG/L^+G$, i.e., the double quotient of the loop group by the positive loop group. In virtue of \refsect{torsors} below each $L^+G$-torsor for the fpqc topology admits sections \'etale locally so that it is enough to consider \'etale sheafifications, cf.~\refle{DM.double.tau} below.
\xrema

\subsection{Motives on ind-Artin stacks}
\label{sect--DM.Artin}
In \refsect{intersection} below, we will construct intersection motives on moduli stacks of shtukas. The following framework is convenient for our constructions.

Let $\IndArt_S^\lft$ be the category of strict ind-Artin stacks ind-(locally of finite type) over $S$. By definition, every object $X\in \IndArt_S$ admits a presentation over a countable filtered index set
$$X=\colim_{i\in I}X_i\eqlabel{presentation.Ind.Artin}$$
by $S$-Artin stacks locally of finite type as defined in \StP{026O} with transition maps $t_{i,j}\co X_i\r X_j$ being closed immersions for all $i\leq j$.
The category $\IndArt_S^\lft$ is, by definition, a full subcategory of the $(2,1)$-category of presheaves of ordinary groupoids on $\AffSch_S$.
As was mentioned in \refre{prestacks.examples}, we regard it as a full subcategory of $\PreStk_S$.
In \refeq{presentation.Ind.Artin}, $\colim$ denotes the colimit of presheaves of ordinary groupoids.
The inclusion $\tau_{\le 1} \PreStk_S \subset \PreStk_S$ preserves filtered colimits \cite[Cor. 5.5.7.4]{Lurie:Higher}, so we will not distinguish between them.
Any object \refeq{presentation.Ind.Artin} is automatically a sheaf of groupoids in the fppf topology\footnote{Even in the fpqc topology if the diagonal of each $X_i$ is quasi-affine.} on $\AffSch_S$ because each $X_i$ is by definition, and the colimit by \cite[Lem.~4.2.6]{EmertonGee:StackyImages} (each object in $\AffSch_S$ is quasi-compact).

\lemm
\label{lemm--Lurie.co.limit}
\textup{(}Lurie \textup{\cite[Lem.~1.3.3, 1.3.6]{Gaitsgory:Generalities}}\textup{)}
\nts{Another exposition for abelian categories is in Barlev, ``Limits of categories, and sheaves on ind-schemes''}
Let $I$ be an \ii-category and $F: I \r \DGCat_\cont$ a functor.
For a map $\alpha: i \r j$ in $I$, the right adjoint of $F(\alpha)$ \textup{(}which exists by the adjoint functor theorem\textup{)} is denoted $G(\alpha)$.
\begin{enumerate}
\item
The evaluation functors $\lim_{I^\opp} G \stackrel{\ev_i} \lr G(i)$ admit left adjoints.
These left adjoints assemble to an equivalence of \ii-categories
$$\colim_I F \stackrel \cong \lr \lim_{I^\opp} G.$$
Here and below, the colimit is taken in the $\infty$-category $\DGCat_\cont$ of presentable DG-categories with continuous \textup{(}i.e., colimit-preserving\textup{)} functors.

\item
\label{item--Lurie.co.limit.map}
If $I$ is filtered and the $G(\alpha)$ are also continuous, then the following composition of the natural functors and this equivalence, $F(j) \stackrel{\ins_j} \lr \colim_I F \cong \lim_{I^\opp} G \stackrel{\ev_i}\lr G(i)$, can be computed as
$$\colim_{k \in I, \alpha: j \r k, \beta: i \r k} G(\beta) \circ F(\alpha).$$
\end{enumerate}
\xlemm


\prop
\label{prop--f_!.ind-Artin}
Let $f: X \r Y$ in $\IndArt_S^\lft$ be any map.
\begin{enumerate}
\item
\label{item--f!.exists}
The functor $f^! : \DM(Y) \r \DM(X)$ has a left adjoint $f_!$.
\item
\label{item--f!.proper}
If $f$ is ind-proper, then $f_!$ satisfies base change with respect to $g^!$ for any map $g : Y' \r Y$ of prestacks.
\item
If $f$ is representable by a closed immersion, $f_!$ is fully faithful.
\end{enumerate}
\xprop

\pf
\refit{f!.exists}:
All \ii-categories in sight are presentable, so we can apply the adjoint functor theorem \cite[Cor.~5.5.2.9]{Lurie:Higher} once we know that $f^!$ preserves all limits and filtered colimits.

If $f$ is a map between $S$-schemes locally of finite type, we pick a (possibly infinite) Zariski cover $V = \bigsqcup V_i \stackrel v \r X$ by open subschemes $V_i \in \AffSch^\ft_S$, and similarly $u: U \r Y$ together with a map $g: V \r U$ compatible with $f$.
\refth{descent.prestacks} gives an equivalence $\DM(Y) = \lim \DM(C^\bullet_u)$ and likewise for $X$.
Applying \refle{Lurie.co.limit}.i) to the composite
\[
\Delta \;\overset{C^\bullet_v}{\lr}\; \Sch_S^\opp\; \overset{\DM^!}{\lr}\; \DGCat_\cont
\]
we see that the forgetful functor $\lim_n \DM(C^n_v) \r \DM(V)$ has a left adjoint, so it preserves limits.
Being a functor in $\DGCat_\cont$, it also preserves colimits.
Being conservative, it therefore creates (co)limits.
To check preservation of (co)limits under $f^!$ we may therefore replace $f$ by $g$.
Using $\DM(\bigsqcup V_i) = \bigsqcap \DM(V_i)$, we further reduce to the case that $X$ and $Y$ are in $\AffSch^\ft_S$, in which case we know the desired properties of $f^!$ by \refsy{motives}, iv) and vi).

If $f$ is a map between algebraic spaces locally of finite type over $S$, we choose an étale cover $u\co U \r Y$ by a scheme $U$ and an étale cover $v\co V \r X$ and a map $g\co V \r U$ so that $u \circ g = f \circ v$.
Using the étale descent equivalences $\DM(X) = \lim \DM(C^\bullet_v)$ and likewise for $Y$, we repeat the above argument and reduce the claim to the previously considered case.

This reasoning can be repeated with Artin stacks instead of algebraic spaces. Here we use that any smooth cover of an Artin stack $u\co U\to X$ by an algebraic space defines an effective epimorphism of \'etale stacks (by \cite[Prop.~7.2.1.14]{Lurie:Higher} this can be checked on the $0$-th truncation), so that the natural map $\colim C_u^\bullet\to X$ is an equivalence after \'etale sheafification. Then by \refth{descent.prestacks} and \refre{DM.prestacks}, we obtain $\DM(X)=\DM((\colim C_u^\bullet)^\et)=\DM(\colim C_u^\bullet)=\lim\DM(C_u^\bullet)$, and we repeat the reasoning as before.

Finally, suppose both $X$ and $Y$ are ind-Artin stacks.
Choosing suitable presentations of $X$ and $Y$, the map $f$ is a colimit of maps $f_i\co X_i\r Y_i$, $i\in I$ over the same index set, cf.~\S\ref{sect--ind.schemes}. Again, the evaluation functors $\ev^X_i : \DM(X) \r \DM(X_i)$ and likewise for $Y_i$ preserve (co)limits.
They satisfy $\ev^X_i f^! = f_i^! \ev^Y_i$.
The family of the $\ev^?_i$ for all $i$ is conservative, so that again preservation of (co)limits under $f^!$ can be checked for $f_i^!$ instead.

The full faithfulness for a closed immersion $f$, i.e., $f^! f_! = \id$, is again checked on each $X_i$ in an ind-presentation \refeq{presentation.Ind.Artin}, and then on smooth, resp.~étale atlases.
We then conclude using the corresponding property for motives on finite type $S$-schemes (combine \refsy{motives}.ix) and x)).

\refit{f!.proper}:
If $f$ is a proper map of locally finite type $S$-schemes, then the left adjoint $f_!$ in the first step is given by
$$\lim_n (\DM(C^n_u) \stackrel {g^n_!} \lr \DM(C^n_v)),$$
where $g^n$ is the $n$-th simplicial level of the map between the two \v Cech nerves.
Indeed, this functor is well-defined by the proper base change (for finite type $S$-schemes) and it is left adjoint to $f^!$ since this is true in each level of the \v Cech nerve.
Applying this argument to each step in the above construction, we obtain our claim for $f$ being an ind-proper map in $\IndArt^\lft_S$.

This way, the claim $f^! f_!= \id$ for a closed immersion $f$ in $\IndArt_S^\lft$ also reduces to the same claim for $f$ in $\AffSch^\ft_S$, where we know it from \refsy{motives}.ix) and x).
\xpf

The preceding statements allow to rephrase motives on ind-Artin stacks in a way which is more closely reminiscent of the usual definition of derived categories on ind-Artin stacks.

\coro
\label{coro--Ind.Artin.co.limit}
For any presentation of an ind-Artin stack $X$ as in \textup{\refeq{presentation.Ind.Artin}}, the category of motives on $X$ can be computed in two ways:
$$\colim_{t_!} \DM(X_i) = \lim_{t^!} \DM(X_i) = \DM(X).$$
If an $S$-group prestack $G$ acts on $X$ and the presentation is $G$-equivariant, then there is an equivalence
$$\colim_{t_!} \DM(G \backslash X_i) \stackrel \cong \lr \lim_{t^!} \DM(G \backslash X_i) \stackrel \cong \lr \DM(G \backslash X).\eqlabel{DM.G.colim}$$
\textup{(}In both statements, the colimit is taken in $\DGCat_\cont$.
Transition functors at the left are the $(t_{ij})_!$.
In the middle, the transition functors are the $t_{ij}^!$.\textup{)}
\xcoro

\pf
The formulations using the limits follow from \refre{DM.prestacks} iii) because $G \backslash X = \colim (G \backslash X_i)$. Then, use \refle{Lurie.co.limit}.
\xpf

By \refco{Ind.Artin.co.limit}, motives $M \in \DM(X)$ can informally be thought of as sequences $M_i \in \DM(X_i)$, together with equivalences $M_i \r t_{i,j}^! M_j$ for $i\leq j$.
These equivalences are subject to higher coherence conditions.
Combining \refle{Lurie.co.limit}.\refit{Lurie.co.limit.map} and the full faithfulness of the $(t_{i,j})_!$, a motive of the form $\ins_i(N)$ is given by $t^!_{i,j} N$ in degrees $j \le i$ and $(t_{i,j})_! N$ for $j \ge i$.
We say that \emph{$M$ is supported on $X_i$} if it is of the form $M = \ins_i(N)$ for some $N \in \DM(X_i)$.

\lemm
\label{lemm--compact.motives.Ind.Artin}
For any ind-Artin stack $X$, the category $\DM(X)$ is compactly generated.
An object $M \in \DM(X)$ is compact iff it is of the form $M = \ins_i(N)$ for some $i$ and some compact object $N \in \DM(X_i)^\comp$.
\textup{(}Thus, it is supported on $X_i$, and is a compact object there.\textup{)}
\xlemm

\pf
We retrace the proof of \refpr{f_!.ind-Artin}: if $X$ is an algebraic space with atlas $v: V \r X$, $\DM(X) = \colim \DM_! (C^\bullet_v)$ is compactly generated by \refeq{colim.DGCat.general}.
Here we use that the !-pushforwards and !-pullbacks along the maps in the \v Cech nerve $C^\bullet_v$ preserve compact objects.
From there, we obtain the claim for Artin stacks $X$ in the same vein.
Similarly, for an ind-Artin stack, use \refco{Ind.Artin.co.limit}.
\nts{The following proof, in the context of abelian categories, is due to Barlev, ``Limits of categories, and sheaves on Ind-schemes'', Proposition 1.14:
On the one hand, $\ins_i$ preserves compact objects since its right adjoint $\ev_i : M \mapsto M_i$ preserves filtered colimits (being a functor in $\DGCat_\cont$).
Conversely, since the individual categories $\DM(X_i)$ are compactly generated, we can write $M = \colim \ins_i M_i$, where $M_i \in \DM(X_i)^\comp$. Since $M$ is compact, this equivalence factors over some $\ins_i M_i$, making $M$ a retract of some $\ins_i M_i$ and therefore of the claimed form.
For any (not necessarily compact) object $M \in \DM(X)$, the map $M \r \colim_i \ins_i (M_i)$ is an equivalence by \refle{Lurie.co.limit} ii), showing that $\DM(X)$ is compactly generated.}
\xpf

Since the pushforwards $(t_{i,j})_!$ are fully faithful, the mapping space $\Hom_{\DM(X)}(M, N)$ between two compact objects is given by $\Hom_{\DM(X_i)}(M_i, N_i)$ for some $i>\!\!>0$.
Summarizing this discussion, we can say that a compact object in $\DM(X)$ is nothing but a motive $M \in \DM(X_i)^\comp$ for some $i\in I$ (and it is identified with its image under $(t_{i,j})_!$ for $j \ge i$).

The following theorem compares the motivic functor $f_!$ with its counterpart for étale $\ell$-adic sheaves due to Liu and Zheng. Liu--Zheng's work is a \ii-categorical refinement of constructions by Laszlo--Olsson \textup{\cite{LaszloOlsson:SixAdic}}.
The formalism for $\Zl$-adic étale sheaves in \cite[§2.3]{LiuZheng:EnhancedAdic} extends to the case of $\D_\et(X, \Ql)$ in view of \refth{D.et!.sheaf} and the fact that for smooth maps (which are the ones needed to cover Artin stacks by schemes) $f^*$ and $f^!$ are equivalent up to twist and shift.

We only state the comparison for Artin stacks as opposed to ind-Artin stacks, since the authors in op.~cit.~do not consider ind-objects.
We will apply the theorem to identify the $\ell$-adic realization of the intersection cohomology motives of moduli stacks of shtukas with the $\ell$-adic intersection cohomology complex as for example considered in \cite[Déf.~4.1]{Lafforgue:Chtoucas}.

\theo
\label{theo--f!.Artin}
Let $f\co X\to Y$ be a map in $\Art_S^\lft$, and let $\ell$ be a prime invertible on $S$.

\begin{enumerate}
\item
\label{item--realization.calX}
The $\ell$-adic realization functor $\rho := \rho_\ell$ for motives on $S$-schemes of finite type (see \refsect{realization}) extends to an $\ell$-adic realization functor
$$\rho_X: \DM(X) \r \D_\et(X, \Ql) := \Ind (\D^\bound_\cstr(X, \Ql))$$
taking values in the ind-completion of the derived \ii-category of $\ell$-adic constructible sheaves constructed in \textup{\cite[§2.3]{LiuZheng:EnhancedAdic}}.

\item
\label{item--rho.f!}
The square
$$\xymatrix{
\DM(X) \ar[d]^{f_!} \ar[r]^{\rho_X} & \D_\et(X, \Ql) \ar[d]^{f_!} \\
\DM(Y) \ar[r]^{\rho_Y} & \D_\et(Y, \Ql)
}\eqlabel{rho.f!}$$
commutes up to equivalence, i.e., there is an equivalence $f_! \circ \rho_{X} \stackrel \cong \r \rho_Y \circ f_!$.
\end{enumerate}
\xtheo

\pf
We will instead show these claims when
$\D_\et(-, \Ql)$ is replaced by $\D(-, \Z/m)$, $m := \ell^n$ and when
$\DM$ is replaced by its integral analogue, denoted $\DM_\h$ in \cite[Def.~5.1.3]{CisinskiDeglise:Etale} (alternatively, in view of \cite[Cor.~5.5.7]{CisinskiDeglise:Etale}, one may use Ayoub's category $\DA_\et$ of étale motives without transfers \cite[§3]{Ayoub:Realisation} if $\ell$ is odd and the $\ell$-cohomological dimension of all residue fields in $S$ is finite).
The rather formal extension to $\Zl$- and then $\Ql$-adic coefficients is omitted.
The claim for $\rho_X$ as stated above follows upon taking the ind-completion, using that $\DM(X)$ is compactly generated (\refle{compact.motives.Ind.Artin}).

As was recalled in the beginning of \refsect{realization}, the properties of $\DM$ listed in \refsy{motives}, hold for the integral $\DM$ as well, except for xiii) and xv), and provided that we replace the word ``compact'' in \refsy{motives} by ``constructible''.

Recall that, for a finite type scheme $U / S$,
there is a pair of adjunctions so $\rho_U = i_* \circ a^*$
(see \cite[4.5.3, 5.5.3]{CisinskiDeglise:Etale} or \cite[§5]{Ayoub:Realisation} under the above-mentioned condition on $S$):
$$
\xymatrix{
\DM(U) \ar@<1ex>[r]^{a^*} &
\DM(U, \Z/m) \ar@<1ex>[l]^{a_*} \ar@<-1ex>[r]_{i_*}^\cong &
\D_\et(U, \Z/m) \ar@<-1ex>[l]_{i^*}
}$$
The functor $a^*$ is obtained by applying the (derived) tensor product $- \t_\Z \Z/m$ to the coefficients, its right adjoint $a_*$ is the forgetful functor.
The right hand equivalence is Ayoub's generalization of Suslin-Voevodsky's rigidity.

The proof of both \refit{realization.calX} and \refit{rho.f!} proceeds in two steps: a first step in which $X$ is an algebraic space, and a second step in which $X$ is an Artin stack.
In the first step, there is an \'etale covering $u\co U \r X$ such that the transition maps in the \v Cech nerve $U^\bullet$ are \'etale maps of locally finite type $S$-schemes.
In the second step, there is similarly a smooth covering by algebraic spaces. We will only spell out the first step in the sequel, the second being analogous.

Throughout, we use the descent equivalence (\refth{descent.prestacks}):
$$\DM(X) \cong \lim \DM^!(U^\bullet) \cong \lim \DM^*(U^\bullet).\eqlabel{DM.calX}$$
At the right, the limit is taken over the same diagram, but using *-pullbacks instead.
These two limits are equivalent by relative purity.

\refit{realization.calX}: Recall from \cite[§0.1]{LiuZheng:Enhanced} that
$$\D_\et(X, \Z/m) \defined \lim \D_\et^*(U^\bullet, \Z/m).\eqlabel{Det.calX}$$
Using \refeq{DM.calX}, to construct $\rho_X$ it remains to observe that $\rho$ (on the category of $S$-schemes) is compatible with *-pullback.\nts{This commutation is the most elementary one, see \cite[Lemme A.3]{Ayoub:Realisation}.}


\refit{rho.f!}:
Using \refeq{DM.calX} and \refeq{Det.calX} (for a smooth covering of $Y$) it is enough to show that \refeq{rho.f!} commutes in case $Y$ is a scheme.
Consider the diagram
$$\xymatrix{
&
\DM(U^\bullet, \Z/m) \ar@<1ex>[r]^{i_{U^\bullet, *}}
&
\D_\et(U^\bullet, \Z/m) \ar@<1ex>[l]_\cong^{i_{U^\bullet}^*}
\\
\DM(X) \ar@<-1ex>[d]_{f_!} \ar@<1ex>[r]^{a^*_X} &
\DM(X, \Z/m) \ar@<-1ex>[d]_{f_!} \ar@<1ex>[l]^{a_{X, *}} \ar@<1ex>[r]^{i_{X, *}} \ar@<-1ex>[u]^{u^!} &
\D_\et(X, \Z/m) \ar@<-1ex>[d]_{f_!} \ar@<-1ex>[u]^{u^!} \ar@<1ex>[l]_\cong^{i_X^*} \\
\DM(Y) \ar@<-1ex>[u]_{f^!} \ar@<1ex>[r]^{a^*_Y} &
\DM(Y, \Z/m) \ar@<-1ex>[u]_{f^!} \ar@<1ex>[l]^{a_{Y, *}} \ar@<1ex>[r]^{i_{Y, *}}&
\D_\et(Y, \Z/m). \ar@<-1ex>[u]_{f^!} \ar@<1ex>[l]_\cong^{i_Y^*}
}$$
The $(-)^*$ and $(-)_!$ functors are the left adjoints, the others the right adjoints.
To show the commutation of the large bottom rectangle involving left adjoints, it is enough to show the commutation of the two individual bottom squares composed of right adjoints.
For the left, this follows from the natural transformation $a_*: \DM^!(-, \Z/m) \r \DM^!(-, \Z)$ of functors $(\AffSch^{\ft}_S)^\opp \r \DGCat_\cont$, which in its turn follows from the transformation $a^*: \DM_!(-, \Z) \r \DM_!(-, \Z/m)$ of functors $\AffSch_S^\ft \r \DGCat_\cont$ given by \cite[Prop.~6.2(b)]{Ayoub:Realisation} or \cite[Cor.~5.5.4]{CisinskiDeglise:Etale}.
To show the commutation of $f^!$ and $i^*$, we use that $f^! : \D_\et(Y, \Z/m) \r \D_\et(X, \Z/m)$ is (by functoriality of $f^!$) the unique functor whose composition with $u^!: \D_\et(X, \Z/m) \r \D_\et(U^\bullet, \Z/m)$ is the functor $(f \circ u)^!$.
Thus, the commutativity of the lower right hand square of right adjoints is equivalent to the commutativity of the square involving the !-pullbacks along the map (of simplicial schemes) $f \circ u$.
This, in turn, holds again by adjunction and the above-mentioned result in {\em loc.~cit.}
\xpf

\rema
\label{rema--l.adic.prestacks}
It would be interesting to apply \refde{DM.prestacks} to the functor
$$\DM_\et^!(-, \Z/\ell^n) \text{ or similarly with } \Zl, \Ql$$
in order to obtain a six-functor formalism of torsion or $\ell$-adic sheaves in great generality.
Another interesting question seems to investigate the relation of D-modules on prestacks as in \cite{Raskin:D-modules} and $\HdR$-modules on prestacks, where $\HdR$ is the motivic ring spectrum representing de Rham cohomology.
\xrema

\refpr{f_!.ind-Artin} allows to conveniently define the motive of an ind-Artin stack ind-(locally of finite type) over $S$. This drops the exhaustiveness condition in \cite[Def.~2.17]{HoskinsLehalleur:Formula} at the expense of getting only a motive in the étale topology, which is coarser than the motive in the Nisnevich topology constructed by the authors in op.~cit.

\defi\label{defi--motive.ind.scheme}
The \emph{motive of an ind-Artin stack} $f\co X \r S$ in $\IndArt_S^\lft$ is defined as the object
$$\M(X) \defined f_! f^! 1_S \in \DM(S).$$
For a presentation $X=\colim X_i$, the motive can be computed as $\M(X) = \colim \M(X_i)$, where $\M(X_i) := (f_i)_! (f_i)^! 1_S$, $f_i\co X_i \r S$.
\xdefi

\exam\label{exam--motive.grass}
Let $\Gr$ be the affine Grassmannian (resp.~$\Fl$ the full affine flag variety) attached to a split reductive group over the spectrum $S$ of any field, cf.~\S \ref{sect--loop.grps} below.
The standard computation of a proper $S$-scheme $X$ stratified by affine spaces ($X = \bigsqcup_{w \in W} \A^{d_w}_S$) gives $\M(X) = \bigoplus_w 1(d_w)[2d_w]$.
Using the localization sequences for motives on ind-schemes (\refth{motives.Ind-schemes}), this extends to ind-schemes, reproducing the computations in \cite[Cor.~23]{Bachmann:Affine}, with one summand for each affine space of a certain dimension $d_\la$ (resp.~$d_w$) occuring in the stratification of $\Gr$ (resp.~$\Fl$) by Iwahori orbits:
$$\eqalign{
\M(\Gr) & \,=\, \bigoplus_{\la\in X_*(T)}  1(d_\la)[2d_\la], \cr
\text{(resp.}\;\;\; \M(\Fl) & \,=\, \bigoplus_{w\in W} 1(d_w)[2d_w].\;\text{)}}$$
\xexam

\subsection{Motives on ind-schemes}
\label{sect--DM.ind-schemes}

In this section, we restrict our attention to motives on ind-schemes which are the objects of main interest in the present manuscript.
Important examples are affine Grassmannians and affine flags varieties, see \refsect{Stratifications.flag}.

Let $\IndSch_S^{\ft}$ be the category of strict ind-schemes of ind-(finite type) over $S$. Every object $X\in \IndSch_S^{\ft}$ admits a presentation
$$X=\colim_{i\in I}X_i\eqlabel{presentation.Ind.scheme}$$
by $S$-schemes of finite type with transition maps $t_{i,j}\co X_i\r X_j$ being closed immersions for all $i\leq j$.
The category $\IndSch_S^\ft$ is, by definition, a subcategory of the (ordinary) category of presheaves on $\AffSch_S$. We regard it as a full subcategory of $\PreStk_S$ by \refre{prestacks.examples}. Note that $\IndSch_S^{\ft}$ is a full subcategory of $\IndArt_S^\lft$, so that the results of \refsect{DM.Artin} carry over to strict ind-schemes of ind-finite type over $S$. In particular, the colimit in \refeq{presentation.Ind.scheme} is the ordinary colimit of presheaves of sets, and each ind-scheme is an fpqc sheaf on $\AffSch_S$. Further, the computation of motives on ind-schemes reduces to the one of motives on schemes as in \refco{Ind.Artin.co.limit} and \refle{compact.motives.Ind.Artin}.

In order to define Whitney-Tate stratified ind-schemes (see \refdele{Whitney.Tate.condition}), we will use the six functor formalism for motives on ind-schemes supplied by the next theorem.

\theo
\label{theo--motives.Ind-schemes}
Motives on ind-schemes of ind-\textup{(}finite type\textup{)} satisfy the properties\footnote{Property xiii) does not carry over.} i\textup{)}-xii\textup{)}, xiv\textup{)}-xvii\textup{)} listed in \refsy{motives}, with the following adjustments:
\begin{itemize}
\item
The functor $f^*$ is defined \textup{(}and left adjoint to $f_*$\textup{)} if the following condition is satisfied: there is a presentation $Y = \colim_i Y_i$ such that the underlying reduced locus $(X \x_Y Y_i)_\red$ is a finite type $S$-scheme \textup{(}as opposed to an ind-scheme\textup{)}.

\item
If $X$ is componentwise quasi-compact, then $f^* 1_S$ is a monoidal unit, for the structural map $f: X \r S$.
In general, $\DM(X)$ does not have a monoidal unit, cf.~Example \ref{exam--monoidal.unit}, but still has $\t$.
\nts{
Suppose there was a monoidal unit $1 \in \DM(X)$.
Then the natural evaluation functor $\DM(X) \r \DM(X_n)$ maps it to a monoidal unit: $ev(1) \t A = ev(1) \t i^* i_* A$, where $i: X_n \r X$ is the closed immersion \textup{(}of ind-schemes\textup{)}.
Using the projection formula for the closed immersion $i$, this can be computed as $i^* (1 \t i_* A) = i^* i_* A = A$.
However, since $\DM(X) = \lim_{i^!} \DM(X_i)$, and $i^! 1 \ne 1$ in general \textup{(}i.e., if the transition immersions $i$ are not also open immersions\textup{)}, there is no such object.}

\item
The term ``smooth'' has to be replaced with ``schematic smooth'' in iii\textup{)}, xii\textup{)}.
The term ``proper'' has to be replaced by ``ind-proper'' in iii\textup{)}, iv\textup{)}. The term ``open immersion'' \textup{(}resp.~``closed immersion''\textup{)} has to be replaced with ``schematic open immersion'' \textup{(}resp.~``schematic closed immersion''\textup{)} in iv\textup{)}, ix\textup{)} \textup{(}resp.~in ix\textup{)}\textup{)}. Item x\textup{)} holds for a Cartesian diagram of ind-schemes whenever the corresponding functors are defined.
\item
For the descent statement in xiv\textup{)}, we require a schematic $h$-covering $X \r Y$.
\end{itemize}
\xtheo

\pf
Throughout, let $f: X \r Y$ be a map in $\IndSch_S^\ft$.
Choosing suitable presentations of $X$ and $Y$, the map $f$ is a colimit of maps $f_i\co X_i\r Y_i$, $i\in I$ over the same index set, cf.~\S\ref{sect--ind.schemes}. We write $\ev_i : \DM(X) \r \DM(X_i)$ and likewise for $Y_i$.

The functor $f^!$ exists for any map of prestacks.
It has a left adjoint $f_!$ by \refpr{f_!.ind-Artin} which satisfies $(f_i)_! \ev_i = \ev_i f_!$. Here we use that the $t_{ij}: X_i \r X_j$ are proper.

Tensor product and internal Hom extend by virtue of the presentation $\DM(X) = \colim_{t_*} \DM(X_i)$ and the formulas $t_* M \t t_*N = t_*(M \t N)$ and $\IHom(t_* M, t_* N) = t_* \IHom(M, N)$, valid for a closed immersion $t$ \cite[Thm.~2.4.50(5)]{CisinskiDeglise:Triangulated}.

\refco{Ind.Artin.co.limit} says that $\DM|_{\IndSch^\ft_S}$ is also the left Kan extension of $\DM$ on $\Sch^\ft_S$, when equipped with $*$-pushforwards. We therefore obtain a functor $f_*$ for any map of ind-schemes.
It satisfies $\ins_i (f_i)_* = f_* \ins_i$.

Now we construct $f^*$ under the above assumption.
We note that $X_\red=\colim_i\tilde X_i$, $\tilde X_i:=(X \x_Y Y_i)_\red$ is a presentation by finite type $S$-schemes.
Using that $\DM(X \x_Y Y_i)=\DM(\tilde X_i)$ by localization, we get an equivalence
$$\DM(X) = \colim_{i} \DM(\tilde X_{i}).$$
Hence, we may define $f^*$, noting that the $*$-pushforwards along the closed immersions given by the transition maps commute with $f^*$ by base change.
One checks that the functor $f^*$ is left adjoint to $f_*$.

The existence of $f_\sharp$ for a schematic smooth map $f$ follows immediately from the case of schemes: after possibly replacing $X_i$ by $X\x_YY_i$, the map $f$ is the colimit of the smooth maps $f_i\co X_i\r Y_i$, $i\in I$.

Finally, we define the subcategories $\DM(X)^{\comp, \wt \le 0}$ to be the colimit (in $\Cat_\infty$) of the categories  $\DM(X_i)^{\comp, \wt \le 0}$ and similarly for ``$\wt \ge 0$'', using that $t_*=t_!$ is weight-exact. The existence of a weight truncation triangle $M^{\wt \le 0} \r M \r M^{\wt \ge 1}$ for some compact object $M \in \DM(X)^\comp$ follows from the above description of compact objects: $M$ is supported on some $X_i$, and a weight truncation triangle of $M$ in $\DM(X_i)$ gives rise to one in $\DM(X)$.
The weight structure on compact objects extends to one on $\DM(X) = \Ind(\DM(X)^\comp)$ by \cite[Prop.~1.3.5]{BondarkoLuzgarev:Relative}.
\xpf

\exam\label{exam--monoidal.unit}
There is no compact monoidal unit in $\DM(\A^\infty)$, where $\A^\infty = \colim (\A^0 \stackrel 0 \r \A^1 \stackrel{\id \x 0} \r \A^2 \dots)$ is an infinite-dimensional affine space: by \refle{compact.motives.Ind.Artin} it would be supported on some $\A^i$. However, using the complementary open immersion $u$ of the inclusion $t_{i, i+1}$,  tensoring a motive supported on $\A^i$ with one of the form $u_! u^* M$, $M \in \DM(\A^{i+1})$ gives zero.
Indeed, by construction of $\t$ on $\DM(\A^\infty)$, the inclusion functors $\DM(\A^i) \r \DM(\A^\infty)$ are monoidal.
\xexam


We finish this section by extending the exterior product of motives to the case of certain pro-algebraic group actions on ind-schemes.
This will be needed in \refsect{intersection}.

\prop
\label{prop--boxtimes}
For $j=1, \dots, n$, let $X^{(j)} = \colim_i X_i^{(j)}$ in $\IndSch_S^\ft$ equipped with an action of a strictly pro-algebraic group $G^{(j)} = \lim G_i^{(j)}$ so that the $G^{(j)}$-action on $X_i^{(j)}$ factors over $G_i^{(j)}$.
We assume that the kernels $\ker (G^{(j)} \r G^{(j)}_i)$ are split pro-unipotent.
Then there is a functor
$$\boxtimes\co \DM\left(G^{(1)} \backslash X^{(1)}\right) \x \dots \x \DM\left(G^{(n)} \backslash X^{(n)}\right) \r \DM \left(\sqcap_j G^{(j)} \backslash X^{(j)}\right).$$
Forgetting the actions of the $G^{(j)}$, and restricting to objects supported on some $X_{i_j}^{(j)}$, this functor agrees with the usual exterior product.
\xprop

\pf
By \refco{Ind.Artin.co.limit}, \refpr{DM.G.homotopy.invariant}, and \refle{DM.G.BarC}, there are equivalences of $\infty$-categories
$$\eqalign{
\DM_{G^{(j)}}(X^{(j)}) & = \colim_i \DM_{G^{(j)}} (X^{(j)}_i) \cr
& = \colim_i \lim_{i' \ge i} \DM_{G_{i'}^{(j)}} (X^{(j)}_i) \cr
& = \colim_i \lim_{i' \ge i} \lim \DM(\BarC(G_{i'}^{(j)}, X^{(j)}_i)).}$$
To define a functor with the asserted properties it is therefore enough to observe that $\boxtimes$ commutes with the !-pushforward along the closed immersions used to form the strict ind-schemes $X^{(j)}$ and then that $\boxtimes$ commutes with the !-pullback along the smooth action and projection maps in the bar construction $\BarC(G_{i'}^{(j)}, X^{(j)}_i)$.
\nts{
We have natural isomorphisms $(f \x g)_! (- \boxtimes -) \stackrel \cong \r (f_! -) \boxtimes (g_! -)$ for any separated map. This follows by showing it separately for proper maps and open embeddings.
By adjunction, we get maps $(f \x g)^! (- \boxtimes -) \gets (f^! -) \boxtimes (g^! -)$.
We claim these are isomorphisms whenever $f$ and $g$ are smooth.
It suffices to assume $g=\id$, by factoring $(f \x g)=(f \x \id)(\id \x g)$, cf. the argument in \cite[Lemma 2.3.4]{JinYang:Kuenneth}.
Consider the diagram whose left half is cartesian:
$$\xymatrix{
X \ar[d] f & X \x Y \ar[l]_x \ar[d]^{f \x \id} \ar[r]^y & Y \ar@{=}[d] \\
X' & X' \x Y \ar[l]_{x'} \ar[r]^{y'} & Y.
}$$
We have the following computations
$$\eqalign{f^! - \boxtimes - &:=  x^* f^! - \t y^* \cr
& \stackrel 1 = (f \x \id)^! x'^* - \t y^* - \cr
& = (f \x \id)^! x'^* - \t (f \x \id)^* y'^ - \cr
& \stackrel 2 \r (f \x \id)^! (x'^* - \t y'^* -).}$$
The equality 1 holds by the relative purity isomorphism using the smoothness of $f$.
The map 2 is part of the projection formula. It is an isomorphism (for smooth $f \x \id$) again by purity.
}
\xpf

\section{Stratified Tate motives}
\label{sect--DTM}

In this section, we discuss stratified Tate motives, i.e., motives which are Tate motives on each stratum of a given stratification.
We work in the generality of Tate motives on stratified ind-schemes relative to a base scheme, which extends the work of Soergel and Wendt \cite[§§3-4]{SoergelWendt:Perverse}.

\nota
\label{nota--S.nochmal}
Our base scheme $S$ is as in \refno{S}.
By convention, all \mbox{ind-}schemes are strict
$S$-ind-schemes of ind-finite type.
Recall from \refsect{ind.schemes} that we only consider ind-schemes indexed by countable index sets.
\xnota

\subsection{Definitions and elementary construction principles}
\label{sect--DTM.definitions}

\defi
\label{defi--stratified.dfn}
i) A \emph{stratified ind-scheme} is a map of ind-schemes
\[
\iota\co X^+=\bigsqcup_{w\in W}X_w\,\r\, X
\]
such that $\iota$ is bijective on the underlying sets, each stratum $X_w$ is a scheme, the restriction to each stratum $\iota|_{X_w}$ is representable by a quasi-compact immersion and the topological closure of each stratum $\iota(X_w)$ is a union of strata.\smallskip\\
ii) A \emph{map of stratified ind-schemes} is a commutative diagram of (ind-)schemes
\begin{equation}\label{strat.map.square}
\xymatrix{
X^+ \ar[r]^{\iota_X} \ar[d]^{\pi^+} & X \ar[d]^\pi \\
Y^+ \ar[r]^{\iota_Y} & Y,}
\end{equation}
where $\pi$ is a schematic map of finite type, and $\pi^+$ maps each stratum in $X^+$ into a stratum in $Y^+$. The latter condition is automatically satisfied whenever the strata are connected.
\xdefi

\rema
\label{rema--explain.stratified.Ind.scheme}
i)
If $X$ happens to be a finite type $S$-scheme, then $W$ is necessarily finite.
For an ind-scheme, $W$ may be countably infinite. However, all the $X_w$ are necessarily of finite type.
\smallskip\\
ii)
By the localization sequence in Synopsis \ref{syno--motives} ix), the category $\DM(X)$ only depends on the underlying reduced ind-scheme structure. After possibly replacing $X$ (resp.~$X^+$) by their reduced sub-ind-schemes, we may and do assume $X$ and $X^+$ to be reduced.
\smallskip\\
iii)
We do not in general assume the strata $X^+=\bigsqcup_{w\in W}X_w$ to be regular (or smooth over $S$).
\smallskip\\
iv)
Let $\iota\co X^+=\bigsqcup_{w\in W}X_w\r X$ be a stratified ind-scheme.
Let $X=\on{colim}_iX_i$ be any ind-presentation. For each $w\in W$, the map $\iota|_{X_w}\co X_w\r X$ factors as $X_w\to X_i\subset X$ for some $i>\!\!>0$ because $X_w$ is quasi-compact (being a finite type $S$-scheme). Hence, the scheme-theoretic image $\overline{X_w}\subset X_i$ of $\iota|_{X_w}$ is a separated $S$-scheme of finite type, and its underlying topological space agrees with the topological closure of $\iota(X_w)$, cf.~\StP{01R8}. Hence, the base change map $\ol{X_w}^+:=X^+\x_{X}\ol{X_w}\r \ol{X_w}$ is a stratified scheme, and $\ol{X_w}^+=\bigsqcup_{v\leq w}X_{v}$ is an (automatically finite) union of strata. There is a presentation
\begin{equation}\label{stratified.Ind-pres}
X\,=\,\colim_{w\in W}\ol{X_w},
\end{equation}
where $W$ is partially ordered by the closure relations of the strata. We can think about $\iota\co X^+\r X$ as being the colimit of the stratified schemes $\ol{X_w}^+=\bigsqcup_{v\leq w}X_{v}\r\ol{X_w}$.
\xrema

All stratified (ind-)schemes we will encounter in \refsect{DTM.Fl} are cellular in the following sense:

\defi
\label{defi--cellular}
An \emph{$S$-cell} is an $S$-scheme isomorphic to $\bbV(\calE) \x \bigsqcap_{i=1}^r \bbV^\x (\calE_i)$ for some vector bundles
$\calE$, $\calE_i$ on $S$ (see \refeq{VE} for notation).
A \emph{cellular $S$-scheme} $X$ is a separated $S$-scheme of finite type which is smooth and admits a stratification into cells.
A \emph{cellular stratified $S$-ind-scheme} $\iota\co X^+=\bigsqcup_{w\in W}X_w \r X$ is a stratified $S$-ind-scheme where $X_w$ is a cellular $S$-scheme for every $w\in W$.
\xdefi

\exam \label{exam--stratified.dfn}
The affine Grassmannian $\Gr_G$ for a split reductive group scheme $G$ over $S$, equipped with its stratification by $L^+G$-orbits, and more general ind-schemes are shown to be cellular in \refsect{Stratifications.flag}.

Our notion of cellularity is less restrictive than the one in \cite{DuggerIsaksen:Motivic}, say, which requires a stratification by affine spaces.
In particular any split reductive group $G$ over $S$ is cellular in our sense by means of the Bruhat stratification.
\xexam

We just write $\iota\co X^+\r X$ or even $X$ whenever $X^+=\bigsqcup_{w\in W}X_w$ or $\iota$ are clear from the context, and likewise for stratified maps. Given a stratified map \eqref{strat.map.square}, we have a commutative diagram of ind-schemes
\begin{equation}\label{stratified.diagram}
\xymatrix{
X^+ \ar@/^2pc/[rr]^{\iota_X} \ar[r]^f \ar[dr]_{\pi^+} & \tilde X \ar[d]^{\tilde \pi} \ar[r]^{\tilde \iota_Y} & X \ar[d]^\pi \\
& Y^+ \ar[r]^{\iota_Y} & Y, 
}
\end{equation}
where the square is cartesian. It is easily checked that $\tilde \iota_Y\co \tilde X=\bigsqcup_{w\in W_Y}(Y_{w}\x_YX)\r X$ and $f\co X^+=\bigsqcup_{w\in W_X}X_w\r \tilde X$ are again stratifications. Thus, a stratified map amounts to possibly refining the preimage of the stratification on $Y$.

\defi
\label{defi--Tate.geometry}
For a scheme $X$ of the form $X = \bigsqcup_{w\in W} X_w$, with $X_w \to S$ of finite type, the category of \emph{Tate motives}
$$\DTM(X) \subset \DM(X)$$
is the stable cocomplete sub-$\infty$-category generated by the objects $1_X(n)$, for $n \in \Z$.

A map $\pi: X \r Y$ between such ind-schemes is called a \emph{Tate map} if $\pi_* 1_X \in \DTM(Y)$. This holds true if and only if the adjunction \refeq{adjunction.star} restricts to an adjunction
$$\pi^*: \DTM(Y) \rightleftarrows \DTM(X) : \pi_*.$$
\xdefi

\rema
\label{rema--DTM.large}
By definition, $\DTM(X)$ is large in the sense that it contains arbitrary coproducts. This implies in particular that it is idempotent complete, i.e., stable under taking direct summands.
For example, if $\iota\co X_w \r X$ is the inclusion of a connected component of $X$, then $\DTM(X)$ contains $\iota_* 1_{X_w}(n)$ because it is a direct summand of $1_X(n)$. We will often apply this remark in the case where $X$ is the disjoint union of strata in some scheme.
If $X$ has infinitely many (finite type) connected components $X_w$, $1_X$ is not compact.
Indeed, by \refle{compact.motives.Ind.Artin} a motive is compact iff its support is contained in finitely many $X_w$'s and is a compact object in the usual sense there.
\xrema

\exam
For any vector bundles $\calE, \calE_1,\ldots,\calE_b$ on $Y$, the projection $\pi\co X := \bbV(\mathcal E) \x \bigsqcap_{i=1}^b \bbV^\x (\mathcal E_i) \r Y$ (see \refeq{VE} for notation) is a Tate map, by the definition of Tate objects, homotopy invariance and localization \cite[4.20]{Deglise:Around}.
\nts{If the $\mathcal E_i$ are trivial bundles, then we have $\pi_\sharp 1 = \bigotimes_{i=1}^b \left (1 \oplus 1(\rk \mathcal E_i)[2 \rk \mathcal E_i-1] \right ).$
In general, the map $a$ in
$\M(\bbV^\x E_i) \r \M(\bbV \mathcal E_i) \stackrel a \r \M(Y)(e)[2e]$ might not vanish (examples given in \cite[Cor.~4.21]{Deglise:Around}), so the extension does not split in general.}
\xexam

The following condition, introduced by Soergel and Wendt \cite[\S 4]{SoergelWendt:Perverse} for Tate motives on stratified schemes (as opposed to ind-schemes), ensures a well-behaved notion of stratified Tate motives:

\defilemm
\label{defilemm--Whitney.Tate.condition} Let $\iota\co X^+=\bigsqcup_{w\in W}X_w \r X$ be a stratified ind-scheme. The following are equivalent:\smallskip\\
i) $\iota^* \iota_* 1_{X^+} \in \DTM(X^+)$,\smallskip\\
ii) $\iota^! \iota_! 1_{X^+} \in \DTM(X^+)$.\smallskip\\
If either i) or ii) holds true, then $\iota\co X^+\to X$ is called a \emph{Whitney-Tate stratification}. In this case, the following subcategories a)-d) of $\DM(X)$ are all the same:\smallskip\\
a) $\langle \iota_{w,*} 1_{X_w}(n)\,;\, w\in W\rangle$, where $\iota_w :=\iota|_{X_w}: X_w \r X$ denotes the inclusion of a stratum,
and ``$\langle \str\rangle$'' denotes the stable sub-$\infty$-category generated under arbitrary (homotopy) colimits, suspensions and desuspensions. (In the language of triangulated categories, this corresponds to the closure under arbitrary shifts, extensions, and coproducts.)
\smallskip\\
b) $\langle \iota_{w,!} 1_{X_w}(n) \,;\, w\in W \rangle$,\smallskip\\
c) $\{M \in \DM(X), \iota^! M \in \DTM(X^+) \}$,\smallskip\\
d) $\{M \in \DM(X), \iota^* M \in \DTM(X^+) \}$.\smallskip\\
This category is denoted $\DTM(X, X^+)$ or just $\DTM(X)$ if the stratification is clear from the context.
\xdefilemm

\pf
For finite type schemes $X$ with regular $X^+$, this is due to \cite[\S4]{SoergelWendt:Perverse}.
The regularity assumption is unnecessary, by replacing the usage of Verdier duality by localization arguments such as the cofiber sequence $i^! \r i^* \r i^* j_* j^*$ (obtained by applying $i^*$ to \refeq{localizationOne}) for two complementary closed and open embeddings $i$ and $j$.
The extension from finite type schemes to ind-schemes is formal, using the following remark:
write $X=\colim_w\ol {X_w}$ as in \eqref{stratified.Ind-pres}. Condition i) is equivalent to the condition $\mathrm{i)_w}$ for all $w\in W$: $\iota_w^* \iota_{*} 1 \in \DTM(X_w)$.
Using that the closure $\ol{X_w}$ in $X$ consists only of finitely many strata, Condition $\mathrm{i)_w}$ for $X$ is equivalent to Condition $\mathrm{i)_w}$ for $\ol{X_w}$.
Similarly for ii), which reduces the equivalence i) $\Leftrightarrow$ ii) to the case of schemes, and accomplishes the proof.
The agreement of a)--d) can also be reduced to the case of schemes in a similar manner.
\xpf

\rema
Following up \refre{DTM.large}, $\DTM(X, X^+)$ is again cocomplete.
The consideration of these large categories is merely a matter of convenience.
We could instead consider its subcategory of compact objects, which by \refle{compact.motives.Ind.Artin} consists precisely of those Tate motives (in the above sense) whose support is contained in finitely many strata and is compact there.

\nts{The category $\DTM(X, X^+)$ is the Ind-completion of its subcategory of compact objects, and all the functors we consider in this paper preserve compact objects.}
\xrema

\exam
\label{exam--G.Whitney.Tate}
Let $k$ be a field, and let $G$ be a split reductive $k$-group with Borel subgroup $B$. Then the Bruhat stratification by $B$-orbits on $G/B$ is a cellular Whitney-Tate stratification by \cite[Prop.~4.10]{SoergelWendt:Perverse}.
We will reprove and extend this statement to the case of partial affine flag varieties in \refth{Fl.WT}.
\nts
{In addition $G$, with the preimage stratification along $p: G \r G/B$ is also Whitney-Tate. It satisfies \eqref{Reunion} since the projection $p$ is smooth and surjective, and therefore a quotient map, so that $p^{-1}(\ol {Y_w}) = \ol{p^{-1}(Y_w)}$.
The strata $G_w := p^{-1} (Y_w)$ are cells, see e.g. Milne's script on ``Algebraic groups'', Thm. 22.72 with Thm 22.64.
$G$ is Whitney-Tate stratified since $p$ is smooth (\refex{basic.WT}\refit{preimage.stratified}).}
\xexam

\rema
\label{rema--WT.smooth.duality}
Let $X^+\r X$ be a Whitney-Tate stratified ind-scheme with $X^+$ being a regular scheme. Then $\DTM(X) \subset \DM(X)$ is stable under the dualising functor $\Du$ (\refsy{motives} viii)). Indeed, $\Du(\iota^* \iota_* 1)=\iota^! \iota_! \Du(1) = \iota^! \iota_! 1(\dim X^+)[2\dim X^+]$ where $\dim X^+\co |X^+|\to \bbZ_{\geq 0}$ is viewed as a locally constant function.
\xrema

\defi
\label{defi--Whitney-Tate.map}
A stratified map $\pi : (X, X^+) \r (Y, Y^+)$ of Whitney-Tate stratified ind-schemes is a \emph{Whitney-Tate map} if \refeq{adjunction.star} restricts to an adjunction
$$\pi^* : \DTM(Y, Y^+) \rightleftarrows \DTM(X, X^+): \pi_*.$$
\xdefi

\rema
\label{rema--WT.maps}
i) Definition \ref{defi--Whitney-Tate.map} is in effect only a condition on $\pi_*$ since $\pi^*$ preserves Tate motives for any stratified map: using the notation of \eqref{stratified.diagram}, $\iota_X^* \pi^* \iota_{Y,!} 1_{Y^+} = f^* \tilde \iota_Y^* \tilde \iota_{Y,!} 1_{\tilde X}$.
By localization, $1_{\tilde X}$ lies in the smallest subcategory generated by $f_! 1$ under extensions and retracts.
Indeed, each connected component of $\tilde X$ is of finite type and is therefore stratified by finitely many $X_w$. We conclude the claim from the localization sequence \refeq{localization}.
Hence the above motive is obtained by extensions and direct summands from $\iota_X^* \iota_{X,!} 1 \in \DTM(X^+)$.\smallskip\\
ii) In addition, if $X^+$ and $Y^+$ are regular, then the Whitney-Tate condition on $\pi$ is equivalent to the existence of an adjunction
$$\pi_! : \DTM(X, X^+) \rightleftarrows \DTM(Y, Y^+) : \pi^!.$$
This follows from \refre{WT.smooth.duality}.
Finally, if in addition $\pi$ is smooth, then the Whitney-Tate condition can also be expressed using $\pi_\sharp$ (using the equivalence $\pi_\sharp = \pi_! (d)[2d]$, $d$ being the relative dimension of $\pi$).
\xrema

\exam
\label{exam--basic.WT}
i) If a schematic smooth map $\pi : X \r Y$ of finite type and a Whitney-Tate stratification $\iota_Y\co Y^+ \r Y$ is given,
then the \emph{preimage stratification} $\iota_X\co X^+ := X \x_Y Y^+ \r X$ is again Whitney-Tate.
Indeed, $\iota_X^* \iota_{X, *} 1 = \pi^{+,*} \iota_Y^* \iota_{Y,*} 1$ using smooth base change, which is a Tate-motive since $Y$ is Whitney-Tate.
\smallskip\\
ii) If, in addition, $\pi^+ : X^+ \r Y^+$ is a Tate map, then $\pi$ is a Whitney-Tate map.
\xexam

The following lemmas give relations between (partial) Whitney-Tate properties of source and target of a proper map.
They will be used to show that partial affine flag varieties are Whitney-Tate, cf.~\refsect{DTM.Fl}.

\lemm
\label{lemm--Tate.up.down}
Let $\pi\co X \r Y$ be a map of stratified ind-schemes such that $\pi^+$ is a Tate map.
We assume that either $\pi^+$ is smooth or that $X^+$ and $Y^+$ are both regular.
\textup{(}We do not assume the stratifications on $X$ or $Y$ are Whitney-Tate.\textup{)} If $M \in \DM(X)$ is such that $\iota_X^! M \in \DTM(X^+)$, then $\iota_X^! \pi^! \pi_* M\in \DTM(X^+)$.
\xlemm

\pf
We use the notation of \eqref{stratified.diagram} and compute
$\iota_X^! \pi^! \pi_* M =
(\pi^+)^! \tilde \pi_* \tilde \iota_Y^! M$.
Any $N \in \DTM(\tilde X)$ lies in the subcategory generated by the summands (corresponding to the connected components of $X^+$) of $f_* f^! N$ by localization.
We may thus consider $(\pi^+)^! \tilde \pi_* f_* f^! \tilde \iota_Y^! M = (\pi^+)^! \pi^+_* \iota_X^! M$ instead. This is a Tate motive by one of the assumptions.
\xpf

\lemm
\label{lemm--Tate.proper.descent}
Let $\pi: X \r Y$ be a proper map of stratified ind-schemes such that the map $\pi^+: X^+ \r Y^+$ has a section $s^+\co Y^+\to X^+$. Assume that $s^+$ is an open and closed immersion which identifies the strata of $Y^+$ with some strata of $X^+$. Further, assume that $X$ is Whitney-Tate, and that $\pi^+$ is a Tate map between regular schemes.
Then $Y$ is also Whitney-Tate, and $\pi$ is a Whitney-Tate map.
\xlemm

\pf
We have to show that $\iota_Y^* \iota_{Y,*} 1 \in \DTM(Y^+)$. Using the notation of \eqref{stratified.diagram}, we compute
\begin{align}
\iota_Y^* \iota_{Y,*} 1 &=\iota_Y^* \pi_* \tilde \iota_{Y,*} f_* s^+_* 1 \nonumber \\
       &=\tilde \pi_* \tilde \iota_Y^* \tilde \iota_{Y,*} f_* s^+_* 1 \ \text{(proper base change).} \nonumber
\end{align}
The map $s^+$ identifies, by assumption, the strata of $Y^+$ with strata of $X^+$, and therefore $s^+_* 1 \in \DTM(X^+)$.
Let $M \in \DM(\tilde X)^\comp$ be any compact object.
The localization property of motives (Synopsis \ref{syno--motives} ix), together with an induction on the finite number of strata on which $M$ is supported) implies that $\tilde \pi_! M$ is an extension of direct summands of $\pi^+_! f^* M$.
It is thus enough to show that $\pi^+_! f^* \tilde \iota_Y^* \tilde \iota_{Y,*} f_* s^+_* 1$ is a Tate motive. This holds since $\pi^+_!$ and $\iota_X^* \iota_{X, *} = f^* \tilde \iota_Y^* \tilde \iota_{Y,*} f_*$ preserve Tate motives by assumption and \refre{WT.maps}. This shows that $Y$ is Whitney-Tate.

In order to see that $\pi$ is Whitney-Tate, we use that $\pi_* \tilde \iota_{Y,*} f_* 1 = \pi^+_* \iota_{Y,*} 1$ is a Tate motive on $Y$ because $\pi^+$ and $\iota_Y$ are both Tate maps by assumption and the previous step, respectively. This proves the lemma.
\xpf

\lemm
\label{lemm--smooth.detects.Tate}
Let $\pi: X \r Y$ be a smooth stratified map of stratified Whitney-Tate ind-schemes such that $X^+$ and $Y^+$ are regular, $\pi^+$ is a Tate map, and such that in the notation in \eqref{stratified.diagram} the adjunction map $\tilde \pi_\sharp 1 \r 1$ admits a section \textup{(}this holds true if $\tilde \pi$ or, a fortiori, $\pi^+$ admits a section\textup{)}. Then $\pi^*$ preserves and detects Tate motives in the following sense: for $M\in \DM(Y)$ one has
$$
\pi^* M \in \DTM(X) \Leftrightarrow M \in \DTM(Y).$$
\xlemm

\pf
The implication $\Leftarrow$ follows from \refre{WT.maps}.
Conversely, we use the projection formula \cite[1.1.26]{CisinskiDeglise:Triangulated} (extended to motives over ind-schemes as discussed in \refsect{DM.ind-schemes}) and smooth base-change to see that the adjunction map
$$\tilde \pi_\sharp \tilde \pi^* 1 \t \iota_Y^* M = \tilde \pi_\sharp \tilde \pi^* \iota_Y^* M = \iota_Y^* \pi_\sharp \pi^* M \r \iota_Y^* M$$
admits a section by assumption.
Since $\pi_\sharp$ preserves stratified Tate motives (by assumption and \refre{WT.maps} ii)), we are done.
\xpf

\defi
\label{defi--DTM.G}
Suppose an ordinary presheaf of $S$-groups $G$ acts on a stratified ind-scheme $X$. If the stratification on $X$ is Whitney-Tate, then we define the category $\DTM_G(X)$ of \emph{$G$-equivariant stratified Tate motives} as the homotopy pullback
\[
\DTM_G(X)\defined \DTM(X) \x_{\DM(X)} \DM(G \backslash X),
\]
i.e., as the full subcategory of $\DM(G\backslash X)$ whose underlying object in $\DM(X)$ is in $\DTM(X)$.
\xdefi

\rema
The category $\DTM_G(X)$ is defined even if the $G$-action on $X$ does not preserve the stratification. For example, we can consider the category $\DTM_{L^+G}(\Gr_G)$ where $\Gr_G$ is equipped with the stratification into either $L^+G$-orbits or Iwahori orbits. In the latter case, the $L^+G$-action does not preserve the strata, but it turns out that the resulting categories are the same.
\xrema

The notation $\DTM_G(X)$ (as opposed to $\DTM(G \backslash X)$) highlights the fact that the category of stratified Tate motives does depend on the given presentation of the prestack quotient $G \backslash X$.
While descent does hold for $\DM$, it does not hold for $\DTM$: being a Tate motive is a property of motives which does not in general descend.
Here is, however, a sufficient condition which ensures such a descent behavior.

\prop
\label{prop--DTM.G/H}
Let $H\subset G$ be an inclusion of ordinary $\tau$-sheaves of $S$-groups where $\tau$ is a Grothendieck topology as in \refth{descent.prestacks}.
Let $X := (G/H)^\tau$ be the quotient of $\tau$-sheaves which we assume to be a smooth finite type $S$-scheme.
We suppose $X$ carries a Whitney-Tate stratification such that the base point $S \r X$ is a stratified map, i.e., factors through $X^+$.
Then the equivalence $\DM_G(X) = \DM_H(S)$ established in \refle{DM.G/H} restricts to an equivalence
$$\DTM_G(X) \overset{\simeq}{\lr} \DTM_H(S).$$
In particular, the left hand category is insensitive to the choice of the stratification \textup{(}under the above assumption\textup{)}.
\xprop

\pf
We have a commutative diagram of prestacks, where the vertical maps are the standard quotient maps and $\pi$ is the structural map of $S$, and $\sim$ indicates that the map $\gamma$ becomes an isomorphism after $\tau$-sheafification so that $\gamma^!$ is an equivalence by \refth{descent.prestacks}:
$$\xymatrix{
X \ar[d]^{\alpha} \ar[r]^\pi & S \ar[d]^{\beta} \\
G \backslash X \ar[r]^\gamma_{\sim} &  S/H.
}\eqlabel{X.S.GX.etc}$$
We have the following commutative diagram of \ii-categories:
$$\xymatrix{
\DTM_H(S) \ar[rr] \ar@{^{(}->}[dd] \ar[dr] & & \DTM_G(X) \ar@{^{(}-}[d] \ar[dr] \\
& \DTM(S) \ar@{^{(}->}[dd] \ar[rr]^(.4){\pi^!} & \ar[d] & \DTM(X) \ar@{^{(}->}[dd] \\
\DM(S/H) \ar@{-}[r] \ar[dr]^{\beta^!} & \ar[r]^(.25){\gamma^!}_(.25)\cong & \DM(G \backslash X) \ar[dr]^{\alpha^!} \\
& \DM(S) \ar[rr]^(.4){\pi^!} & & \DM(X).
}$$
The left and right face is cartesian by definition of equivariant Tate motives.
The front face is cartesian by \refle{smooth.detects.Tate} (which uses the assumption on the stratified section).
We conclude that the back square is cartesian,
so we get our claim.
\xpf

\exam\label{exam--descent.Tate.not}
The existence of a section in the above proposition is crucial: consider a finite Galois extension $K / k$ with Galois group $G$
and let $X := \Spec K \stackrel \pi \r \Spec k =: S$.
Then $G \backslash X = S$, but the natural functor $\DTM(S) \r \DTM_G(X)$ (whose composite with the forgetful functor to $\DTM(X)$ is just $\pi^!$) is fully faithful, but not essentially surjective, since $1 \in \DTM_G(X)$ is not in the image.
In fact, $\DTM_G(X)$ can be identified with the category $M$ of Artin-Tate motives on $S$ such that $\pi^! M$ is a Tate motive.
\xexam

\defi
\label{defi--stratified.G.action}
A \emph{stratified $G$-action} of an ordinary group presheaf $G$ on a stratified ind-scheme $(X, X^+)$ is an action $G\x_S X\to X$ which restricts to an action (necessarily in a unique way) $ G \x_S X^+ \r X^+$.
If $G$ is algebraic and fibrewise connected, this is equivalent to requiring that each stratum $X_w$ is $G$-stable.
\xdefi

Let $G=\lim_{i\geq 0}G_i$ be strictly pro-algebraic, cf.~\refsect{algebraic.grps}. By taking suitable unions of $\ol{X_w}$, $w\in W$ there exists a presentation $X=\colim_{i\geq 0}X_i$ with the following properties (cf.~Lemma \ref{lemm--adm.strata}): for every $i\geq 0$, the $G$-action on $X_i$ factors through $G_i$, and the map $\iota_i\co X^+\x_XX_i\to X_i$ is a stratified scheme of the form $X_i^+=\bigsqcup_{w\in W_i}X_w$ for a suitable finite subset $W_i\subset W$.

\prop
\label{prop--DTM.G}
Let $X = \colim_i X_i$ be a stratified Whitney-Tate ind-scheme equipped with a stratified $G$-action of a strictly pro-algebraic group $G = \lim G_i$ so that the $G$-action on $X_i$ factors over $G_i$.
We assume that the kernels $\ker (G \r G_i)$ are split pro-unipotent.
Then there are equivalences of $\infty$-categories
$$\eqalign{
\DTM_G(X) & = \lim_i \DTM_G (X_i) \cr
& = \lim_i \lim_{j \ge i} \DTM_{G_j} (X_i) \cr
& = \lim_i \lim_{j \ge i} \lim \DTM^! (\BarC(G_j, X_i)),
}$$
where all the limits are formed by $!$-pullbacks.
\xprop

\pf
The first equivalence follows from the definitions and the commutation of homotopy pullbacks of \ii-categories with limits.
The second equivalence follows from \refpr{DM.G.homotopy.invariant}.
(The restriction functors $\DTM_{G_j}(X_i) \r \DTM_{G_{j'}}(X_i)$ are equivalences for all $j' \ge j \ge i$, but we keep them in order to be able to take the colimit over $i$.)
To simplify notation, we now write $G$ for $G_j$ and $X$ for $X_i$.
For each $n\geq 0$, the stratification $G^n\x_S X^+\r G^n\x_SX$ obtained by pulling back the one on $X$ is Whitney-Tate by \refex{basic.WT} i), so the categories $\DTM(G^n\x_SX)$ are well-defined.
In addition, all maps appearing in $\BarC(G,X)$ are maps of stratified schemes since the action (resp.~projection) map $G\x_SX\to X$ is smooth and is stratified by assumption (resp.~construction).
Hence, the !-pullback along these maps  preserves the subcategories $\DTM(G^n\x_SX)$ by Remark \ref{rema--WT.maps} i).
Thus $\DTM^!(\BarC(G, X))$ is well-defined.

By \refle{lim.equivalence}, there is an equivalence of $\infty$-categories
\[
\lim \DTM^!(\BarC(G, X))= \lim \left(\DTM(X) \x_{\DM(X)} \DM^!(\BarC(G,X))\right).
\]
Indeed, in both limits, the category at the 0-vertex is $\DTM(X)$.
The right hand side computes $\DTM_G(X)$ by the commutation of limits with pullbacks.
\xpf

\subsection{Stratified mixed Tate motives}
\label{sect--MTM}

\nota
\label{nota--BS.vanishing}
Throughout \refsect{MTM}, $S$ is a scheme satisfying the conditions in \refno{S}, and moreover satisfies the Beilinson-Soulé vanishing conjecture as in \textup{\refeq{BS.vanishing}}.

Moreover, $\iota\co X^+ \r X$ is a cellular Whitney-Tate stratified ind-scheme in the sense of Definitions \ref{defi--cellular}, \ref{defilemm--Whitney.Tate.condition}.
\xnota

Recall from \cite[Prop.~1.2.1.16]{Lurie:HA},
that a t-structure amounts to giving a full subcategory
$\C^{\ge 0} \subset \C$ which is closed under extensions and such that the inclusion functor $\incl$ admits a left adjoint $\tau^{\ge 0}$.
An exact functor $F$ between two stable categories with t-structures is \emph{left t-exact} if it preserves the ``$\ge 0$''-subcategories.
The subcategory $\C^{\le -1} := \ker \tau^{\ge 0} = \{M \in \C, \tau^{\ge 0} M = 0\}$ agrees with
the subcategory spanned by the objects $M$ such that the mapping space $\Hom_\C(M, \C^{\ge 0})$ is contractible.
Then $F$ is called \emph{right t-exact} if it preserves the ``$\le -1$'' subcategories.
A functor is \emph{t-exact} if it is both right- and left-t-exact.

\subsubsection{The motivic t-structure}
In this section we introduce the motivic $t$-structure and use it to cut down the category $\DTM(X)$, for a Whitney-Tate stratified scheme $X$, to the abelian category $\MTM(X)$ of stratified mixed Tate motives.
The consideration of Tate motives in this paper has two reasons:\smallskip\\
i) The motivic $t$-structure on $\DM(X)$, predicted by the ``standard'' conjectures seems to be out of reach at the moment.
By contrast, this $t$-structure is known to exist for the subcategory $\DTM(X) \subset \DM(X)$, under some (rather severe) restrictions on the base scheme $X$, see \refle{t-structure.stratum}.\smallskip\\
ii) The realization functor is known to be conservative on Tate motives, see \refle{Tate.conservative}.\smallskip\\
Recall that the \emph{Beilinson-Soulé vanishing conjecture} holds for $S$ if
$$
\Hom_{\DM(S)}(1, 1(n)[m]) = K_{2n-m}(S)^{(n)}_\Q \stackrel ! = 0 \eqlabel{BS.vanishing}
$$
holds for $m < 0$ and also for $m = 0$ and $n \ne 0$.

\exam\label{exam--BS.exam}
The Beilinson--Soulé conjecture is known by Quillen's, Borel's, and Harder's work, for $S$ being the spectrum of a finite field, a number field or localizations of its ring of integers, and finally for a smooth curve over a finite finite or its function field.
Since $K$-theory commutes with filtered colimits, it also holds for filtered colimits of such rings; for example, for the algebraic closures $\ol{\bbF}_p$ or $\ol \Q$.
See the references cited in \cite[Rmk.~3.10]{SoergelWendt:Perverse} and \cite[1.6]{DeligneGoncharov:Groupes}.
\xexam

The following lemma is a mild extension of \cite[Thm.~3.7]{SoergelWendt:Perverse}:
we consider infinite disjoint unions of cellular schemes, and we also allow non-compact objects.
The proof is the same as in loc.~cit., using in addition that a $t$-structure on a stable \ii-category yields a $t$-structure on its ind-completion.
\nts{This fact is proved in Proposition 1.1.3 in a note by Campbell ``Ind-coherent sheaves I''.}

\lemm
\label{lemm--t-structure.stratum}
With $S$ as in \refno{BS.vanishing}, every cellular $S$-scheme also satisfies the Beilinson-Soulé vanishing conjecture. If $X^+ = \bigsqcup_{w\in W} X_w$ is a possibly infinite disjoint union of cellular schemes, then one has:\smallskip\\
i\textup{)} The category $\DTM(X^+)$ carries a unique $t$-structure such that both the dualising functor $\Du_{X^+}$ and the twisting functors $M \mapsto M(n)$, $n \in \Z$ are exact.
Equivalently, the heart is the subcategory of $\DM(X)$ which is generated by means of extensions and arbitrary coproducts by the objects
$$1_{X_w}(n)[d_w], \ (n \in \Z), \eqlabel{motivic.t.structure}$$
where $d_w\co|X_w|\to\bbZ_{\geq 0}$ is the relative dimension of $X_w\to S$ viewed as a locally constant function. The $t$-structure is called the motivic $t$-structure, and its heart $\MTM(X^+)$ is the $\bbQ$-linear abelian category of mixed Tate motives.
This t-structure restricts to one on the subcategory $\DTM(X)^\comp$ of compact objects.
\smallskip\\
ii\textup{)}
The category $\DTM(X^+)$ has a weight structure whose heart is generated -- by means of coproducts and extensions -- by the objects $1(n)[2n]$, $n \in \Z$.
\smallskip\\
iii\textup{)}
The $t$-structure is transversal to the weight structure in the sense of \cite[§1.2]{Bondarko:WeightHearts}. In particular, any compact object $M \in \MTM(X^+)^\comp$ has a functorial \emph{weight filtration}, i.e., a finite sequence of subobjects
$$0 = M_0 \subset M_1 \subset \dots \subset M_n = M,$$
with $M_i \in \MTM(X^+)^\comp$ such that $M_i / M_{i-1}$ is a finite direct sum of $1_{X_w}[d_w]((d_w-i)/2)$ \textup{(}if $d_w - i$ is odd, this object is to be interpreted as 0\textup{)} for $w\in W$.
\xlemm

\nts
{\pf
Replacing $X$ by its connected components, the category $\DTM(X)$ is a Tate category in the sense of \cite[Definition 1.1]{Levine:Tate}.
Moreover, the vanishing conjection in \cite[Theorem 1.5]{Levine:Tate}
$$\Hom_{\DM(X)}(1, 1(p)[q])=0$$
for $p > 0$ and $q \le 0$ is satisfied.
For $X=S$, this is just Beilinson-Soulé, because of \refsy{motives}xiii).
If $f: X = \bbV(\calE) \x \bbV^\x (\calE_1) \r S$, $e_1 := \rk \calE_1$, use the cofiber sequence
$1_S(e_1)[2e_1-1] \r f_\sharp 1 \r 1_S$ in order to show that the conditions in \cite[Definition 1.1]{Levine:Tate} for $X$ boil down to the same conditions for $S$.
(It is not necessary that the extension splits for the argument to work.)
Finally, do an induction on the number of strata: if $Z \stackrel i \r X \stackrel j \gets U$, $Z$ is a (smooth) stratum of codimension $c$, we have a distinguished triangle $j_! 1_U \r 1_X \r i_! i^! 1 = i_* 1(-c)[-2c]$. This leads to a distinguished triangle $M(U) \r M(X) \r M(Z)(-c)[-2c]$.
Altogether $p_\sharp p^* 1$ is an extension of motives of the form $1(n)[2n]$, $n \in \Z$, whence the claim.
\xpf
}

The $t$-structures on Tate motives on individual strata can be glued together:

\coro
\label{coro--t-structure}
The category $\DTM(X, X^+)$ carries the \emph{motivic $t$-structure} which is glued from the motivic $t$-structures on the strata $X^+=\bigsqcup_{w\in W}X_w$ \textup{(}cf.~\refle{t-structure.stratum}\textup{)}. That is, the motivic $t$-structure satisfies
\begin{eqnarray}
\DTM(X)^{\le 0} & = & \{M \in \DTM(X), \iota^*M \in \DTM(X^+)^{\le 0}\} = \{M, \iota_w^* M \in \DTM(X_w^+)^{\le 0} \text{ for all }w\}, \nonumber
\\
\DTM(X)^{\ge 0} & = & \{M \in \DTM(X), \iota^! M \in \DTM(X^+)^{\ge 0}\} = \{M, \iota_w^! M \in \DTM(X_w^+)^{\ge 0} \text{ for all }w\}. \nonumber
\end{eqnarray}
The heart $\MTM(X) = \MTM(X, X^+)$ of the motivic $t$-structure is the category of \emph{\textup{(}stratified\textup{)} mixed Tate motives}. It is abelian and $\bbQ$-linear.
This $t$-structure is conservative \textup{(}a.k.a.~non-degenerate\textup{)}: an object $M \in \DTM(X)$ is 0 if and only if ${\motH}^i M=0$ for all $i\in \bbZ$ \textup{(}truncation with respect to the motivic $t$-structure\textup{)}.
This t-structure restricts to one on the subcategory $\DTM(X)^\comp$ of compact objects.
\xcoro

\pf
As in \cite[Thm.~10.3]{SoergelWendt:Perverse}, we can glue the individual motivic $t$-structures on the strata $X_w$, using \cite[Thm.~1.4.10]{BBD}.
The conservativity of the t-structure is immediate from the fact that $M=0$ iff $\iota_w^* M = 0$ for all $w\in W$ iff $\iota_w^! M = 0$ for all $w\in W$ and the conservativity of the $t$-structure on $\DTM(X_w)$.
\xpf

\rema
\label{rema--classical.t-structure} In the proof of \refpr{equivariant.MTM}, we will also use the \emph{classical $t$-structure} on $\DTM(X)$.
The name ``classical'' refers to the analogy with the classical (resp.~standard) $t$-structure on sheaves.
For an individual stratum $X_w$, $w\in W$, we declare the classical $t$-structure on $\DTM(X_w)$ to be the unique $t$-structure such that $1_{X_w}(n)$, $n \in \Z$, is in its heart. This differs from \refeq{motivic.t.structure} only by a shift. Then, the same way as for the motivic $t$-structure, the classical $t$-structure on $\DTM(X)$ is obtained by glueing the $t$-structures on the strata $X_w$.
If necessary, we distinguish between these $t$-structures by using ``$\cl$'' (e.g., $\tau^{\ge 0, \cl}$ for the classical one), and ``$\mot$'' for the motivic one.
\xrema

The following lemma allows to lift the exactness properties of the six functors from the $\ell$-adic world to stratified Tate motives.

\lemm
\label{lemm--Tate.conservative}
We assume that $S$ \textup{(}in addition to the conditions in \refno{BS.vanishing}\textup{)} admits an $\ell$-adic realization functor $\rho_\ell$ \textup{(}see \refsy{motives} xvii\textup{)}\textup{)}.
The restriction of $\rho_\ell$ to Tate motives,
$$\rho_\ell\co \DTM(X) \r \D_\et(X, \Ql)$$
is conservative and creates the motivic $t$-structure from the perverse $t$-structure on $\D_\et(X, \Ql)$
\textup{(}i.e., $M \in \DTM(X)^{\le 0}$ is equivalent to $\rho_\ell(M) \in \D_\et(X, \Ql)^{\le 0}$ and likewise for ``$\ge 0$''\textup{)}.
It also creates the classical $t$-structure on $\DTM(X)$ from the classical $t$-structure on the right hand category.
In particular, for $M \in \DTM(X)$ the following properties a\textup{)}-b\textup{)} are equivalent:\smallskip\\
a\textup{)} $M \in \MTM(X)$ \textup{(}resp.~$M$ is in the heart of the classical $t$-structure\textup{)},\smallskip\\
b) $\rho_\ell (M)$ is an $\ell$-adic perverse sheaf \textup{(}resp. $\rho_\ell (M)$ is an $\ell$-adic `honest' sheaf\textup{)}.
\xlemm

\pf
Since $\iota^*$ is conservative and creates the t-structure, we may replace $X$ by a connected component of $X^+$ and assume $X$ is a cellular scheme.
Now, the restriction to compact objects, $\rho_\ell|_{\DTM(X)^\comp}$ is exact (both for the motivic and the classical t-structure), which follows right from the definitions.
This implies the exactness also on the Ind-completion of $\DTM(X)^\comp$, which is $\DTM(X)$.
\nts{Here we are using the following lemma: if $F: \C \r \calD$ is an exact functor, $\D$ is cocomplete, then the natural functor $F: \Ind(\C) \r \D$ is also exact: clearly $(\Ind \C)^{\ge 0} = \Ind(\C^{\ge 0})$ is preserved.
Also $(\Ind \C)^{\le 1} := \ker (\tau^{\ge 0} : \Ind \C \r \Ind \C^{\ge 0})$ is preserved, since $\tau^{\ge 0}$ commutes with $F$,
since they are both continous (preserve filtered colimits), and continuous functors out of $\Ind \C$ are determined by their restriction to $\C$, and there they do commute since $F$ is exact.}

To check conservativity of $\rho_\ell$ we note that the t-structure on $\DTM(X)^\comp$ is non-degenerate.
This implies that the t-structure on its Ind-completion is again non-degenerate.
By \cite[Prop.~1.3.7]{BBD}, the family of the cohomology functors $\H^i: \DTM(X) \r \MTM(X)$ ($i \in \Z$) is therefore conservative, so it is enough to show the conservativity of $\rho_\ell |_{\MTM(X)}$.
Any mixed motive $M \in \MTM(X)$ is the filtered colimit of its compact subobjects $N \subset M$.
Using the exactness of $\rho_\ell$, $\rho_\ell(N) \subset \rho_\ell(M)$, so the conservativity of $\rho_\ell$ on $\MTM(X)^\comp$ implies the one on $\MTM(X)$.
Using \refle{t-structure.stratum}.ii), the conservativity of $\rho_\ell|_{\MTM(X)^\comp}$ can be shown as in \cite[Thm.~3.9]{Wildeshaus:Notes} (which considers the special case $X = \Spec k$).
\xpf

\rema
\label{rema--realization.functor}
Since no $\ell$ is invertible in $\Z$, \refle{Tate.conservative} does not literally apply to $S = \Spec \Z$.
However, \refle{Tate.conservative} can be extended to such cases, too.
More precisely, suppose that\smallskip
\begin{enumerate}
\item[$\bullet$]
$S$ is as in \refno{S}, and satisfies the Beilinson-Soulé vanishing conjecture
\item[$\bullet$]
there are primes $\ell_1 \ne \ell_2$ such that $S[\ell_1^{-1}]$, $S[\ell_2^{-1}]$ satisfy the Beilinson-Soulé vanishing conjecture.
\end{enumerate}
\smallskip
\noindent Then
the functor
$$\DTM(X) \lr \DTM(X[\ell_1^{-1}]) \x \DTM(X[\ell_2^{-1}]) \stackrel{\rho_{\ell_2} \x \rho_{\ell_1}} \lr \D_\et(X[\ell_1^{-1}], \Q_{\ell_1}) \x \D_\et(X[\ell_2^{-1}], \Q_{\ell_2})$$
has the same properties as $\rho_\ell$ in the statement above: it is conservative, and creates the motivic t-structure from the perverse t-structures on the right hand categories.
Note that the first functor is conservative by Zariski descent for $\DM$ and creates the motivic t-structure.
\nts{$j^*$ on $\DTM$ is t-exact for the motivic t-structure and also the pair $(j_1^*, j_2^*)$ is conservative, which implies that the functor also detects the t-structure: an object $M$ is $\ge 0$ iff $\tau^{\le -1} M =0$, applying $(j_1^*, j_2^*)$ commutes with $\tau^{\le -1}$ and is conservative.}

By a slight abuse of language, we still refer to the situation above by saying that ``\emph{$S$ admits an $\ell$-adic realization functor}''.
\xrema

\coro
\label{coro--exactness}
Suppose $S$ admits an $\ell$-adic realization functor. Let $\pi\co X\to Y$ be a Whitney-Tate map of cellular Whitney-Tate stratified ind-schemes. 
If $\pi$ is quasi-finite, then $\pi^*$ and $\pi_!$ \textup{(}resp.~$\pi^!$ and $\pi_*$\textup{)} are right $t$-exact \textup{(}resp.~left $t$-exact\textup{)} for the motivic $t$-structure.
\xcoro
\pf This follows from \refre{realization.functor} and the classical statement for $\ell$-adic sheaves \cite[§4.2.4]{BBD}.
\xpf

The following statement is a refinement of the detection of Tate motives (\refle{smooth.detects.Tate}).

\lemm
\label{lemm--smooth.detects.t-structure}
Suppose $\pi\co X\to Y$ is a stratified map of cellular Whitney-Tate stratified ind-schemes such that $\tilde \pi _\sharp 1 \r 1$ has a section. If $\pi$ is smooth of constant relative dimension $d$, then $\pi^*[d]$ preserves and detects the motivic $t$-structure in the following sense:
$$\eqalign{
\pi^* M[d] \in \DTM(X)^{\ge 0} & \Leftrightarrow M \in \DTM(Y)^{\ge 0} \cr
\pi^* M[d] \in \DTM(X)^{\le 0} & \Leftrightarrow M \in \DTM(Y)^{\le 0}.
}$$
\xlemm

\pf
Using smooth base-change, 
we immediately reduce to $Y = Y^+$ (and hence $\pi = \tilde \pi$), and further to the case where $Y$ is a cellular scheme.
The implication ``$\Leftarrow$'' then reduces to the observation that $\pi^*[d] 1_Y[d_Y] = 1_X[d_X]$ lies in $\MTM(X)$ where $d_Y$ (resp.~$d_X$) is the relative dimension of $Y$ (resp.~$X$).

Conversely, assume $\pi^* M[d] \in \DTM(X)^{\le 0}$ or equivalently, $\pi^* M[d]$ is left orthogonal to all objects in $ \DTM(X)^{\geq 1}$.
Since $\pi^* [d]$ is exact, this includes in particular all $\pi^*[d]M'$ with $M' \in \DTM(Y)^{\geq 1}$.
Then, $\Hom(M, M') \r \Hom(\pi^* M, \pi^* M') = \Hom(\pi_\sharp \pi^* M, M')$ is injective since $\pi_\sharp \pi^* M = \pi_\sharp 1 \t M$ contains $M$ as a direct summand by assumption.
Therefore $M$ is orthogonal to $M'$, i.e., $M \in \DTM(Y)^{\le 0}$.
An analogous argument works for $\ge 0$ which implies the lemma.
\xpf

\lemm
\label{lemm--pi*.fullyfaithful.MTM}
Let $\pi : X \r Y$ be a smooth surjective map of schemes of relative dimension $d$ with \emph{connected} fibers.
Let $Y$ be equipped with a cellular Whitney-Tate stratification, and equip $X$ with the preimage stratification \textup{(}\refex{basic.WT} i\textup{))}.
If $X$ is a cellular $Y$-scheme \textup{(}\refde{cellular}\textup{)}, then
$$\pi^![-d] = \pi^*[d](d)\co \MTM(Y) \r \MTM(X)$$
is a fully faithful functor.
\xlemm

\pf
We may assume that $Y$ (and hence $X$ \StP{0378}) is connected.
We proceed by induction on the number of cells in $X$.
If $X$ is a single cell (over $Y$), then localization, homotopy invariance, and the Beilinson--Soulé vanishing for $Y$ show the following isomorphism for $M, N \in \MTM(Y)$:
$$\Hom_{\DM(X)}(\pi^* M, \pi^* N) = \Hom_{\DM(Y)}(M, N).\eqlabel{Hom.X.Y}$$
For the inductive step consider a minimal cell $Z \stackrel i \r X \stackrel j \gets U := X \backslash Z$.
Since $X$ is connected, and has at least two strata by assumption, the codimension $c := \codim_X Z > 0$ is positive.
We use relative purity to compute the localization triangle: $i_! i^! \pi^* N = i_* i^* \pi^* N (-c)[-2c] \r \pi^* N \r j_* j^* \pi^* N$.
Let $\pi_Z := \pi \circ i$, $\pi_U := \pi \circ j$.
Applying $\Hom_{X}(\pi^* M, -)$ gives a $4$-term exact sequence
\begin{align*}
& \Hom_Z(\pi_Z^* M, \pi_Z^* N(-c)[-2c]) \r \Hom_{X}(\pi^* M, \pi^* N) \r \Hom_U(\pi_U^* M, \pi_U^* N)  \\
& \r \Hom_Z(\pi_Z^* M, \pi_Z^* N(-c)[-2c+1]).
\end{align*}
The outer terms vanish by the Beilinson--Soulé condition for $Z$ since $-2c+1 < 0$ which follows from the corresponding condition for $Y$ by \refle{t-structure.stratum}.
\xpf

\subsubsection{Equivariant mixed Tate motives}

\defi
\label{defi--MTM.G}
Let $G$ be an ordinary presheaf of groups acting on $X$ over $S$.
We define the category of \emph{$G$-equivariant mixed Tate motives} as the homotopy pullback
$$\MTM_G(X) \defined \MTM(X) \x_{\DM(X)} \DM(G\backslash X),$$
or equivalently as the full subcategory of $\DM(G\backslash X)$ (\refde{DM.G}) of those objects whose underlying motive is a mixed Tate motive, with respect to the given stratification on $X$.
\xdefi

The preceding definition works even if the $G$-action is incompatible with the stratification. However, to prove that $\MTM_G(X)$ is abelian we need that the $G$-action respects the stratification:

\prop
\label{prop--MTM.G}
In the situation of \refpr{DTM.G}, assume in addition that each $G_i$ is cellular.
Then $\DTM_G(X)$ admits a $t$-structure such that the forgetful functor $\DTM_G(X)\r \DTM(X)$ is $t$-exact.
Its heart identifies with the category $\MTM_G(X)$ defined above.
It is a $\bbQ$-linear abelian category and can be computed as
$$\MTM_G(X) = \colim_i \lim_{j \ge i} \MTM_{G_j}(X_i).$$
The colimit is taken in the bicategory of cocomplete categories and continuous functors \textup{(}or in $\DGCat_\cont$\textup{)}. Transition functors are the pushforwards along $X_i \r X_{i'}$.
The limit is formed using the restriction functors along $G_j \r G_{j'}$.

The category $\MTM_G(X)$ is compactly generated, i.e., $\MTM_G(X) = \Ind(\MTM_G(X)^\comp)$ is the ind-completion of the subcategory of compact objects.
The latter is given by a similar formula, namely
$$\MTM_G(X)^\comp = \colim_i \lim_{j \ge i} \MTM_{G_j}(X_i)^\comp,$$
where now, however, the colimit is taken in the bicategory of categories with not necessarily continuous functors.
\xprop

\rema
Colloquially speaking, a compact mixed $G$-equivariant Tate motive on $X$ is therefore simply a $G_j$-equivariant mixed Tate motive on some $X_i$, where $j \ge i$ is arbitrary.
\xrema

\pf By \refpr{DTM.G}, we have
\begin{equation}\label{Bar_Construction}
\DTM_G(X) \,=\, \colim_i \lim_{j \ge i} \lim \DTM^!(\BarC(G_j, X_i)).
\end{equation}
As explained there, the outer colimit is using *-pushforward along the closed immersions $t_{i,i'}: X_i \r X_{i'}$.
The middle limit uses the restriction functors (which are equivalences) and the functors in the limit are !-pullbacks (along the maps $f$ in $\BarC(G_j, X_i)$, which are smooth maps).
The latter limit does not change up to equivalence if we replace these pullback functors $f^!$ by $f^![-d]$, where $d$ is the relative dimension of $f$ (which is finite, since $G_j$ is of finite type).
Both these shifted transition functors and also the $(t_{ij})_*$ are t-exact by Lemma \ref{lemm--smooth.detects.t-structure} and \refco{exactness}, respectively.
Using \refle{t.structure.limit}, we get a t-structure whose heart is as claimed above.
\nts{If we were to include the section maps in $\BarC(G_i, X_i)$, we could argue like this: for maps of the form $e^*[-g_i]$, $g_i$ the relative dimension of the $S$-scheme $G_i$, we use that the generators $\iota_{1, *} 1_{G \x X^+}(n)[g_i+ \dim X^+]$ of $\DTM(G \x X)^{\ge 0}$ are mapped to $e^* \iota_{1,*} 1(n)[\dim X^+] = \iota_* 1(n)$, which lie in $\DTM(X)^{\ge 0}$ and similarly with ``$\le 0$''. The commutation of $e^*$ and $\iota_*$ works here since $e^* \iota_{1,*} 1 = e^* \iota_{1,*} p_2^{+,*} 1_{X^+} = e^* p_2^* \iota_* 1 = \iota_* 1$, where we use that $p_2$ is smooth to commute $p_2^*$ past $e_*$.}
\xpf

\lemm
\label{lemm--t.structure.limit}
Consider a diagram of stable $\infty$-categories $\C_i$ and exact functors $F_{ij}\co \C_i \r \C_j$ between them.
We suppose that all $\C_i$ are equipped with $t$-structures $\C_i^{\ge 0}$ \textup{(}we use cohomological notation\textup{)} and that the functors $F_{ij}$ are left t-exact, i.e., preserve the ``$\ge 0$''-subcategories.
Let
$$\C := \lim_{F_{ij}} \C_i.$$
\nts{\textup{(}recall from \cite[§1.2.1]{Lurie:HA} that this means their homotopy categories have $t$-structures in the sense of \cite{BBD}\textup{)}.}

\begin{enumerate}
\item
\label{item--t-structure.lim}
The subcategory $\C^{\ge 0} := \lim_{F_{ij}} \C_i^{\ge 0}$ on $\C$ determines a t-structure.
\textup{(}Thus, $M \in \C^{\ge 0}$ iff all the projections $p_i (M) \in \C_i$ under the canonical maps $p_i: \C \r \C_i$ lie in $\C_i^{\ge 0}$.\textup{)}
If the $F_{ij}$ are in addition right t-exact, then the $p_i$ also create the ``$\le 0$'' part of the t-structure.

\item
\label{item--t-structure.colim}
Suppose in addition that $I$ is filtered, that the $\C_i$ are presentable, and that the $F_{ij}$ have left adjoints $G_{ij}: \C_j \r \C_i$ which are t-exact and fully faithful.
Using \refit{t-structure.lim} and \refle{Lurie.co.limit}, we consider the induced t-structure on $\colim_{G_{ij}} \C_i \stackrel \cong \r \C$.
Here, $\colim$ denotes the colimit in the \ii-category of presentable \ii-categories with continuous functors.
Then the canonical functors $\C_i \r \colim_{G_{ij}} \C_i$ are t-exact.
\nts{The assumption that the $G_{ij}$ are fully faithful should be unnecessary, provided that the t-structures are compactly generated.}
\end{enumerate}
\xlemm

\pf
\refit{t-structure.lim}:
The adjunctions $(\tau_i^{\ge 0}, \incl_i)$ for the $\C_i$'s propagate to one on $\C$, and the limit of the inclusion functors is fully faithful since the counit map of the adjunction for $\C$ is the limit of the counit maps for the $\C_i$, hence an equivalence, so that $\incl$ is indeed fully faithful.
The subcategory $\C^{\ge 0} \subset \C^0$ is closed under extensions since the $p_i$ preserve finite (co)limits, in particular extensions.
If the $F_{ij}$ are right exact, then the full subcategory $\C^{\le -1} := \lim_{F_{ij}} \C_i^{\le -1} \subset \C$ agrees with $\ker (\tau^{\ge 0} = \lim \tau_i^{\ge 0} : \C \r \C^{\ge 0})$.
\nts{Here, use that an object $M \in \lim \C_i$ is 0 iff all the $p_i(M)=0$. Indeed, $M=0$ iff $\id_M = 0$ in the mapping space $\Hom(M, M)$.
This implies $\id_{p_i(M)} = p_i(\id_M) = 0$. Conversely $p_i(M) = 0$ clearly implies $M=0$.
Now, let $M \in \C$. Write $M = (M_i)$, $M_i = p_i(M)$.
Then $M \in \ker \tau^{\ge 0}$ iff $(\tau^{\ge 0}_i M_i) = \tau^{\ge 0} (M) = 0$ iff all the $\tau^{\ge 0}_i M_i = 0$ iff $M_i \in \C_i^{\le -1}$.}

\refit{t-structure.colim}:
By \refle{Lurie.co.limit}.\refit{Lurie.co.limit.map} and the full faithfulness assumption, the composition $\ins_i: \C_i \r \colim_{G_{ij}} \C_i \cong \lim_{F_{ij}} \C_i$ satisfies $p_j (\ins_i (M)) = F_{ij} (M)$ for $j < i$ and $G_{ij} (M)$ for $j \ge i$.
If $M \in \C_i^{\ge 0}$, then both $F_{ij}(M)$ and $G_{ij}(M)$ are in $\C_j^{\ge 0}$ using the left exactness assumptions. Thus $\ins_i$ is left t-exact.
To show the right t-exactness of $\ins_i$, we show that $\Hom_\C(\ins_i M, N) = \lim_j \Hom_{\C_j} (p_j(\ins_i M), p_j (N)) = 0$ for any $M \in \C_i^{\le -1}$ and $N \in \C^{\ge 0}$.
We may restrict the limit to $j > i$, in which case the $j$-th term reads $\Hom_{\C_j}(G_{ij} M, p_j(N))$ which is contractible since $p_j(N) \in \C_j^{\ge 0}$ and $G_{ij}(M) \in \C_j^{\le -1}$.
\xpf

\rema
\label{rema--classical.equivariant}
To connect the category $\MTM_G(X)$ more closely to classical notions, we suppose for simplicity of notation that $G$ is algebraic and $X$ a scheme.
The proof above shows that
$$\MTM_G(X) = \lim \left (\MTM(X)\xymatrix{ \ar@<-0.5ex>[r]_{p_2^![-d]}  \ar@<0.5ex>[r]^{a^![-d]} &} \MTM(G \x_S X) \xymatrix{ \ar@<1ex>[r] \ar@<-1ex>[r] \ar[r] &} \MTM (G \x_S G \x_S X) \xymatrix{ \ar@<1.5ex>[r] \ar@<-1.5ex>[r] \ar@<0.5ex>[r] \ar@<-0.5ex>[r] &} \cdots \right),$$
where $d = \dim G/S$.
Using the t-exactness of $e^![d]$, $e: X \r G \x_S X$, and the cofinality of the subcategory $(\Delta^+)^\opp$ of injective maps in $\Delta^\opp$\nts{\cite[Lemma 6.5.3.7]{Lurie:Higher}}, we may equivalently form the limit over the full cosimplicial diagram also involving the maps built using the unit sections.
On the other hand, since $\MTM$ is an ordinary category, the limit does not change if we drop all terms $\MTM(G^{\x n} \x X)$ for $n \ge 3$.
Thus, an object in $\MTM_G(X)$ is a datum
$$(M_0, M_1, M_2, \varphi_e, \varphi_a, \varphi_{p_2}, \varphi_{\id \x a}, \varphi_{m \x \id}, \varphi_{p_{23}})$$
where $M_0 \in \MTM(X)$, $M_i \in \DM(G^i \x X)$ for $i=1,2$ and $\varphi_a : a^![d] M_0 \r M_1$, $\varphi_{\id \x a} : (\id \x a)^![d] M_1 \r M_2$ etc.~are equivalences.
Up to equivalence, we may further replace $a^![d]$ by $a^*[-d]$, which description is equivalent to the standard definition of say equivariant perverse sheaves in \cite[III.15, p.~187]{KiehlWeissauer:Weil}.
\xrema

The following proposition is a motivic variant of a well-known statement about equivariant perverse $\ell$-adic sheaves \cite[III.15, p.~188]{KiehlWeissauer:Weil}.
It allows us to easily construct equivariant mixed Tate objects, since we only have to check the existence of an isomorphism, as opposed to verifying higher coherences.

\prop
\label{prop--equivariant.MTM}
Let $G$ be a \emph{fibrewise connected} smooth $S$-affine $S$-group whose underlying scheme is cellular.
Let $X$ be a cellular Whitney-Tate stratified scheme with a stratified $G$-action.
Then the forgetful functor
$$\MTM_G(X) \r \MTM(X)$$
is fully faithful and its image consists precisely of those motives $M \in \MTM(X)$ such that there is an isomorphism $a^! M[-\dim G] \cong p^! M[-\dim G]$ in $\MTM(G \x_S X)$ where $a\co G\x_S X\to X$ \textup{(}resp.~$p\co G\x_S X\to X$\textup{)} is the action \textup{(}resp.~projection\textup{)}.
We call this condition the naive equivariance condition.
\xprop

\pf
Fix $n$, and denote $H:=G^{\x_S n}$.
Applying \refle{pi*.fullyfaithful.MTM} to the projection map $p\co  H\x_S X \r X$ we see that $p^!\co \MTM(X) \r \DM(H \x_S X)$ is fully faithful.
This implies that $!$-pullback along the unit map $X \r H \x_S X$ is fully faithful, hence our claim by \refle{lim.equivalence}.
\xpf

\coro
\label{coro--MTM.G.trivial}
In the situation of \refpr{equivariant.MTM}, suppose $G$ acts trivially on $X$. Then there is an equivalence $\MTM_G(X) \r \MTM(X)$.
We therefore get
$$\H^1_{\mathrm{mot}}(BG, \Q) \defined \Hom_{\DM(G \setminus S)}(1, 1[1])=\Ext^1_{\MTM_G(X)}(1, 1) = \Ext^1_{\MTM(S)}(1, 1) = (K_{-1}(S) \t \Q)^{(0)} = 0.$$
\xcoro

\pf
For a trivial action, the condition $a^! M \cong p^! M$ is vacuous.
We are done using that $\Ext^1$ in the heart of an t-structure agrees with homomorphisms in the original triangulated category \cite[(1.1.5)]{DeligneGoncharov:Groupes}.
The identification with $K$-theory uses the regularity of $S$.
\xpf

In the following proposition we do not need to assume that $G$ or $H$ is cellular since we just work with the definition of $\MTM_G(X)$ and only use the t-structure on non-equivariant motives.

\prop
\label{prop--ballaballa}
Suppose we are in the situation of \refpr{DTM.G/H}: $H\subset G$ is an inclusion of ordinary $\tau$-sheaves of $S$-groups, $X := (G/H)^\tau$ be the quotient of $\tau$-sheaves which we assume to be a smooth finite type $S$-scheme equipped with a cellular Whitney-Tate stratification such that the base point $S \r X$ factors over $X^+$.
Let $d := \dim X / S$, and denote by $e\co S/H \r G\backslash X$ be the map induced from the base point.
Then the equivalence in \refpr{DTM.G/H} restricts to an equivalence
$$e^! [d]\co \MTM_G(X) \overset{\simeq}{\lr} \MTM_H(S).$$
\xprop

\pf
We construct the inverse of the equivalence.
Let $\pi\co X\to S$ be the structure map.
The functor $\pi^![-d]$, being $t$-exact and conservative, creates the motivic $t$-structure, i.e., for $M \in \DTM(S)$ we have $\pi^![-d] M \in \MTM(X)$ iff $M \in \MTM(S)$.
We then conclude using \refpr{DTM.G/H}.
\xpf

We now obtain a convenient description of generators of certain equivariant categories of Tate motives.

\prop
\label{prop--generators.DTM.G}
Let $H \subset G$ be an inclusion of smooth $S$-affine $S$-groups, and suppose that the \'etale sheaf quotient $X=(G/H)^\et$ is a scheme equipped with a cellular Whitney-Tate stratification such that the base point $S \r X$ factors over $X^+$. Further, assume that $H$ is fibrewise connected and that its underlying scheme is cellular.
Then the shifted !-pullback along the map $S \r H \backslash S \r G / X$ induces an equivalence
$$\MTM_G(X) \stackrel \cong \r \MTM(S).$$
In particular, $\MTM_G(X)$ \textup{(}resp.~$\DTM_G(X)$\textup{)} is generated by means of coproducts and extensions \textup{(}resp.~by means of colimits and arbitrary shifts\textup{)} by motives of the form $1_{X}(n)[d]$, $n \in \Z$, $d:=\dim X/S$.
\xprop

\pf
This follows from \refpr{ballaballa}, \refco{MTM.G.trivial} and the fact that $\MTM(S)$ is generated by the motives $1(n)$.
\xpf

We obtain the following corollary in the case $X$ has several strata.

\coro
\label{coro--DTM.G.X.generators}
Suppose a smooth $S$-affine $S$-group $G$ acts on a finite type scheme $X$ over $S$.
Suppose that it carries a cellular Withney-Tate stratification $X^+= \sqcup_{w\in W}X_w\to X$ where each stratum has the form $X_w=(G/H_w)^\et$ and satisfies all conditions in \refpr{generators.DTM.G} \textup{(}in particular $H_w$ is fibrewise connected\textup{)}.
We write $\iota_w\co G \backslash X_w \r G \backslash X$ for the map of prestacks \textup{(}whose étale sheafifications are Artin stacks\textup{)} induced by the strata inclusions.
Then $\DTM_G(X)$ is generated, by means of colimits and shifts by the objects $(\iota_w)_! 1(n)$, where $\iota_w: X_w \r X$ is the inclusion and $n \in \Z$, and $(\iota_w)_!$ is the left adjoint of $\iota_w^!$.
\xcoro

\pf
Let $X_0 \subset X$ be an open stratum, and its complement $X_1$ stratified by the remaining strata.
By \refle{functoriality.equivariant}, we have the following adjoints (left adjoints are depicted above their right adjoints)
$$\xymatrix{
\DM(G \backslash X_1) \ar[rr]|{i_!} & &
\DM(G \backslash X) \ar[rr]|{j^!} \ar@/_1pc/[ll]_{i^*} \ar@/^1pc/[ll]^{i^!} & &
\DM(G \backslash X_0) \ar@/_1pc/[ll]_{j_!} \ar@/^1pc/[ll]^{j_*}
}.$$
Moreover, $(i^!, j^!)$ is conservative and $i^* i_! = \id$, $j^! j_! = \id$, so that the localization cofiber sequence in \refeq{localization} carries over to the equivariant setting.
By construction, the functors have their usual meaning if we forget the $G$-equivariance, so they preserve the subcategories $\DTM_G(\str) \subset \DM(G \backslash \str)$.
Thus $\DTM_G(X)$ is generated by $j_! \DTM_G(X_0)$ and $i_! \DTM_G(X_1)$.
This allows an induction on the number of strata, the case of a single stratum being \refpr{generators.DTM.G}.
\xpf

\subsection{Simple objects}

In this section, $S$ is a scheme satisfying the conditions in \refno{S} which is moreover regular, satisfies the Beilinson-Soulé vanishing conjecture as in \textup{\refeq{BS.vanishing}} and admits an $\ell$-adic realization functor $\rho_\ell$ in the sense of \refre{realization.functor}.

In order to describe the simple objects in the category $\MTM(X)$, we need to introduce the middle extension functor $j_{!*}$. This follows closely the classical theory \cite{BBD}.

Let $\iota\co X^+=\bigsqcup_{w\in W}X_w\to X$ be a cellular Whitney-Tate stratified $S$-scheme of finite type, and denote $\iota_w:=\iota|_{X_w}$. Let $j\co U\to X$ be an open immersion, and assume $U^+:=U\x_XX^+=\bigsqcup_{w\in W_U}X_w$ for some subset $W_U\subset W$. In particular, $U$ is Whitney-Tate, and $j$ is a Whitney-Tate map of cellular stratified Whitney-Tate schemes.

For an object $A\in \MTM(U)$, we have $j_!A\in \DTM^{\leq 0}(X)$ (resp. $j_*A\in \DTM^{\geq 0}(X)$) by definition of the $t$-structures. Hence, the natural map $j_!A\to j_*A$ factors as $j_!A\to {\motH}^0j_!A\to {\motH}^0j_*A\to j_*A$ where ${\motH}^0$ denotes the $0$-th truncation with respect to the motivic $t$-structure on $\DTM(X)$.

\defi
The \emph{middle extension} of $A$ along $j$ is defined as the image
\begin{equation}\label{middle-extension}
 j_{!*}A\defined \on{im}({\motH}^0j_!A\to {\motH}^0j_*A)\in \MTM(X).
\end{equation}
\xdefi

By \refco{t-structure}, $j_{!*}$ preserves compact objects.

\lemm
\label{lemm--middle.extension}
Under the $\ell$-adic realization \textup{(}cf.~Lemma \ref{lemm--Tate.conservative}\textup{)}, one has $\rho_\ell(j_{!*}A)\simeq {j}_{!*}(\rho_\ell(A))$. In particular, the middle extension $j_{!*}A$ is the unique extension $B\in \DTM(X)$ of $A$ with the property
\[
\text{$^{\cl}\H^i (\iota_w^*\rho_\ell(B))=0$ for $i\geq -\dim(X_w / S)$\;\; and\;\; $^{\cl}\H^i (\iota_w^!\rho_\ell(B))=0$ for $i\leq -\dim(X_w / S)$,}
\]
where $^{\cl}\H^i$ denotes the $i$-th cohomology with respect to the classical $t$-structure on $\D_\et(X, \Ql)$.
\xlemm
\pf
This follows from the parallel $\ell$-adic statement \cite[Prop 2.1.9]{BBD} and \refre{realization.functor} because the functors ${\motH}^0$, $j_!$ and $j_*$ commute with $\rho_\ell$, as does the formation of images and kernels in the abelian category $\MTM(X)$.
\xpf

\lemm
\label{lemm--intermediate.simple}
If the $S$-scheme $X$ is cellular \textup{(}hence smooth\textup{)} of relative dimension $d$, then one has $1_X[d]=j_{!*}1_U[d]\in \MTM(X)$. Additionally, if $X$ is irreducible, then $1_X[d]$ is a simple object in $\MTM(X)$.
\xlemm

\pf
Since $X$ (and all the strata $X_w$) are smooth, one has $\iota_w^! 1 = 1(-\codim_X X_w)[-2\codim_X X_w]$, so the first claim follows from \refle{middle.extension}.
If $X$ is irreducible, then $1_X[d]$ is simple by adapting \cite[Lemma 4.3.3]{BBD} to our set-up.
\xpf

Let $\iota\co X^+=\bigsqcup_{w\in W}X_w\to X$ be a stratified $S$-ind-scheme. As in \eqref{stratified.Ind-pres}, we write  $X$ as the colimit of the stratified $S$-schemes $\ol{X_w}^+=\bigsqcup_{v\leq w}X_v\to \ol{X_w}$. The map $\iota|_{X_w}\co X_w\to X$ factors as
\[
X_w\overset{j_w}{\longrightarrow} \ol{X_w}\overset{i_w}{\longrightarrow} X,
\]
where $j_w$ is a dense open immersion and $i_w$ a closed immersion. Note that if $X^+\to X$ is cellular Whitney-Tate stratified, then $\ol{X_w}^+\to \ol{X_w}$ is cellular Whitney-Tate stratified.

\defi
In the above situation, the \emph{intersection motive} is defined for each $w \in W$, $n\in \bbZ$ as
\begin{equation}\label{intersection.complex}
\on{IC}_w(n)\defined i_{w,*}\left(j_{w, !*}1_{X_w}(n)[d_w]\right)\in \MTM(X)^\comp,
\end{equation}
where $d_w$ is the relative dimension of the cellular $S$-scheme $X_w$.
(\refco{exactness} and $i_{w,*}=i_{w,!}$ shows $\IC_w(n)$ is a mixed Tate motive.)
\xdefi

\rema
We emphasize that the existence of intersection motives (without assuming any standard conjectures) in this special situation is guaranteed by the cellularity assumptions and the Beilinson--Soulé conjecture.
Another, somewhat orthogonal case where the intersection motive exists is the case of singular proper surfaces \cite{Wildeshaus:Pure}.
\xrema

\theo
\label{theo--simple.objects}
Let $X^+=\bigsqcup_{w\in W}X_w\to X$ be a cellular Whitney-Tate stratified ind-scheme.\smallskip\\
i\textup{)} The category of compact objects $\MTM(X)^{\on{c}}$ is Artinian and Noetherian: every object is of finite length.\smallskip\\
ii\textup{)} If $X_w$ is irreducible for each $w\in W$, then the twisted intersection motives $\on{IC}_w(n)\in \MTM(X)^{\on{c}}$ are simple. Additionally, if $X_w$ is a cell for each $w\in W$, then the simple objects in $\MTM(X)^{\on{c}}$ are precisely the intersection motives $\on{IC}_w(n)$ for $w\in W$, $n\in \bbZ$.
\xtheo
\pf
Again i) is immediate from $\rho_\ell$ being conservative, cf.~\refre{realization.functor}. For ii), we reduce to the case where $X$ is a cellular Whitney-Tate stratified $S$-scheme.
The intersection motives $\IC_w(n)$ are simple because they map to simple objects under the $\ell$-adic realization. As in \cite[\S 4.3.4]{BBD} one can proceed by Noetherian induction to see that every simple object is obtained in this way if each $X_w$ is a cell: Let $j\co U\to X$ be an open stratum with closed complement $i\co X\backslash U\to X$, and assume by induction that ii) holds for objects in $i_*\MTM(X\backslash U)^{\on{c}}\subset \MTM(X)^{\on{c}}$. If $A\in \MTM(X)^{\on{c}}$, then $j^*A\in \MTM(U)^{\on{c}}$ is by construction a successive extension of twisted $1_U[d](n)$ with $d$ being the relative dimension of $U$ and $n \in \Z$ arbitrary (the category $\MTM(U)$ is a category of Tate type, then apply \cite[Thm.~1.4 (iii)]{Levine:Tate}). Hence, the simple constituents of $j_{!*}j^*A$ are of the desired form.
The exact sequences in $\MTM(X)$,
\[
\begin{aligned}
& 0\to i_*{\motH^{-1}}(i^*A)\to \motH^0 j_!j^*A\to A\to i_*{\motH^{0}}(i^*A)\to 0\\
& 0\to i_*{\motH^{0}}(i^!A)\to A\to \motH^0 j_*j^*A\to i_*{\motH^{1}}(i^!A)\to 0
\end{aligned}
\]
give that the cokernel of $j_{!*}j^*A\subset \im(A\to \motH^0 j_*j^*A)$ lies in the category $i_*\MTM(X\backslash U)^{\on{c}}$. Part ii) follows by induction.
\xpf

\section{Loop groups and their flag varieties}\label{sect--loop.grps}

In this section, we study loop groups and their flag varieties associated with Chevalley groups $G$ over $\Z$. We then gather some results about the partial affine flag variety $\Fl=LG/\calP$ associated with a parahoric subgroup $\calP\subset LG$. A final goal is to show that $\Fl$ has the structure of a cellular stratified ind-scheme in the sense of Definition \ref{defi--stratified.dfn}.
Results over general base schemes $S$ are deduced in \refsect{chevalley.base.change} by base change.

\subsection{Group-theoretic notation} \label{sect--loop.group.dfn}
We fix a Chevalley group scheme $G$ over $\Z$, i.e., a smooth affine $\Z$-group scheme whose geometric fibers are connected reductive groups, and which admits a maximal torus defined over $\Z$, cf.~\cite[\S6.4]{Conrad:Groups}. We fix a maximal $\Z$-torus $T\subset G$ which is automatically split, cf.~\cite[Exam.~5.1.4]{Conrad:Groups} (because the Galois module $X^*(T_{\bar{\bbQ}})$ is necessarily unramified, and hence trivial). Let $B\subset G$ be a Borel subgroup defined over $\Z$ and containing $T$. We obtain a Borel triple of smooth $\bbZ$-group schemes
\begin{equation}\label{Chevalley_Triple}
T\,\subset\, B\,\subset\, G.
\end{equation}
{\it i\textup{)} Cocharacters.} There is the natural pairing of finitely generated free $\bbZ$-modules $\lan\str,\str\ran\co X^*(T)\x X_*(T)\,\r\, \bbZ$ where $X^*(T):=\Hom(T,\bbG_{m,\bbZ})$ (resp.~$X_*(T):=\Hom(\bbG_{m,\bbZ},T)$) is the group of characters (resp.~of cocharacters) defined over $\bbZ$. \smallskip\\
{\it ii\textup{)} Roots.} Let $R\subset X^*(T)$ be the roots associated with $(G,T)$, and let $R_+$ the subset of positive roots defined by $B$. \smallskip\\
{\it iii\textup{)} Affine roots.} Let $\scrA:=X_*(T)\otimes \bbR$.
The roots $R$ are regarded as linear maps on $\scrA$.
Adding integers to their values gives the set $\calR:=R+\bbZ$ of affine roots which are then affine linear maps $\scrA\to \bbR$. \smallskip\\
{\it iv\textup{)} Standard apartment.} The vector space $\scrA$ equipped with the simplicial structure defined by the hyperplanes $\ker(\al)$ for $\al\in \calR$ is called \textit{the standard apartment}. The connected components of $\scrA\backslash (\cup_{\al\in\calR}\ker(\al))$ are called {\em alcoves}.
For any alcove $\bba$, its closure $\bar{\bba}$ is a disjoint union of facets $\bbf$ which are locally closed by convention and may have dimension ranging from $0$ to $\dim_\bbR(\scrA)$ (e.g.~if $\bar{\bba}$ is a triangle, then it is decomposed into three vertices, three edges and one alcove).
Thus, $\scrA$ decomposes into a disjoint union of facets.
There is a unique alcove $\bba_0$ called the {\em base alcove} which lies in the chamber of $\scrA$ defined by $R_+$, and which contains $0$ in its closure.
A point in $\scrA$ is called {\em special} if every hyperplane $\ker(\al)$, $\al\in \calR$ is parallel to a hyperplane passing through that point.
The base point $0\in \scrA$ is always special.
Note that if $\ker(\al)$ contains a point $x\in \scrA$, then it contains the unique facet $\bbf$ with $x\in \bbf$.\smallskip\\
{\it v\textup{)} Weyl groups.} Let $W_0$ denote the Weyl group of the root system $R$ which acts on $\scrA$ by linear transformations. The Iwahori-Weyl group (or extended affine Weyl group) $W:=X_*(T)\rtimes W_0$ acts on $\scrA$ by affine linear transformations permuting transitively the set of alcoves in $\scrA$. For each $\al\in \calR$, we have the reflection $s_\al\in W$ along the hyperplane $\ker(\al)$. The group $W$ acts on $\calR$ via $w\al\co \scrA\to \bbR$, $x\mapsto \al(w^{-1}x)$. We have the relation $w s_\al w^{-1}=s_{w\al}$ for all $\al\in\calR$. For each facet $\bbf\subset \scrA$, we denote by $W_\bbf\subset W$ the subgroup generated by the reflections $s_\al$ such that $\bbf\subset \ker(\al)$, i.e., $\al|_\bbf\equiv 0$. We remark that $W_\bbf$ is finite. \smallskip\\
{\it vi\textup{)} Dominant cocharacters.} The monoid of dominant cocharacters is
\begin{equation}\label{dom_weights}
X_*(T)_+\defined \{\la\in X_*(T)\;|\; \lan a,\la\ran\geq 0\; \forall a\in R_+\}.
\end{equation}
This monoid is equipped with the dominance partial order defined by: $\la\leq\mu$ if and only if $\mu-\la$ is a sum of positive coroots with coefficients in $\bbZ_{\geq 0}$.

\subsection{Parahoric subgroups}\label{sect--loop.definitions}

The \emph{loop group $LG$} is the group functor on the category of rings
$$LG\co R\longmapsto G(R\rpot{\varpi}),\eqlabel{Loop_Grp_Dfn}$$
where $R\rpot{\varpi}$ denotes the ring of Laurent series in the formal variable $\varpi$. Since $G$ is affine and of finite type, the loop group $LG$ is representable by an ind-affine ind-scheme, cf.~\cite[\S1.a]{PappasRapoport:LoopGroups} (or \cite[Lem.~3.2]{HainesRicharz:TestFunctionsWeil} in greater generality). In particular, it is an fpqc sheaf on the category of rings, cf.~\S\ref{sect--ind.schemes} below.

We are interested in certain pro-algebraic closed subgroups $\calP\subset LG$, called {\it parahoric subgroups}. These subgroups should be regarded as infinite-dimensional analogues of parabolic subgroups in linear algebraic groups. We first give the guiding examples. The general notion defined in Lemma \ref{lemm--parahoric.defi} is needed in the proof of \refth{Fl.WT}. It requires some Bruhat-Tits theory \cite{BruhatTits:Groups2}.

\exam \label{exam--parahoric}
i) The {\it positive loop group $L^+G$} is the group functor
\[
L^+G\co  R\longmapsto G(R\pot{\varpi}),
\]
where $R\pot{\varpi}\subset R\rpot{\varpi}$ is the subring of formal power series. Then as presheaves $L^+G=\on{lim}_{i\geq 0}G_i$ with $G_i(R)=G(R[\varpi]/(\varpi^{i+1}))$, and hence $L^+G$ is represented by a pro-algebraic $\bbZ$-group, see \S \ref{sect--algebraic.grps} for conventions on pro-algebraic groups. The inclusion $L^+G\subset LG$ is relatively representable by a closed immersion, and makes $L^+G$ a closed $\bbZ$-subgroup functor of $LG$.\smallskip\\
ii) Example i) is generalized as follows. For a standard parabolic subgroup $P\subset G$ (i.e.~$P$ contains $B$), let $\calP\subset L^+G$ (resp.~$\calP_i\subset G_i$) be the preimage of $P\subset G$ under the reduction map $L^+G\to G$, $\varpi\mapsto 0$ (resp.~$G_i\to G_0=G$). Then $\calP=\on{lim}_{i\geq 0}\calP_i$ is a pro-algebraic closed $\bbZ$-subgroup scheme of $LG$. If $P=B$, then the parahoric subgroup $\calB:=\calP$ is called the {\it standard Iwahori subgroup}.
\xexam

Let $k$ be a field. Recall the classical notion of parahoric subgroups in $LG\otimes k$.
Here and below $LG \t k$ is the restriction of $LG$ to the category of $k$-algebras;
it can be computed as $L(G \t k)$.
Let $\bbf\subset \scrA$ be a facet, and let $\calG_{\bbf,k}$ be the associated parahoric $k\pot{\varpi}$-group scheme, that is, the neutral component of the unique algebraic $k\pot{\varpi}$-group scheme such that the generic fiber is $G\otimes k\rpot{\varpi}$, and such that the $k\pot{\varpi}$-points are the pointwise fixer of $\bbf$ in $G(k\rpot{\varpi})$ (under its action on the Bruhat-Tits building).
In particular, this defines for any $k$-algebra $R$ a subgroup $\calG_{\bbf,k}(R\pot{\varpi})\subset G(R\rpot{\varpi})$. The {\it parahoric $k$-subgroup $\calP_{\bbf,k}\subset LG\otimes k$ associated with $\bbf$} is the group functor on the category of $k$-algebras defined by
\begin{equation}\label{parahoric}
\calP_{\bbf,k}\co R\mapsto \calG_{\bbf,k}(R\pot{\varpi}).
\end{equation}
Again $\calP_{{\bbf,k}}$ is a pro-algebraic $k$-group given by the inverse limit of the Weil restriction of scalars $\calP_{{\bbf,k},i}:=\on{Res}_{k_i/k}(\calG_{{\bbf,k}}\otimes_{k\pot{\varpi}} k_i)$ for $k_i=k[\varpi]/(\varpi^{i+1})$. In particular, to any facet $\bbf\subset \scrA$ we have associated the family of pro-algebraic groups $\bbf \mapsto \{\calP_{{\bbf,k}}\}_k$ where $k$ ranges over all fields. For the notion of strictly pro-algebraic groups, we refer the reader to \refsect{algebraic.grps}.

\lemm \label{lemm--parahoric.defi}
For any facet $\bbf\subset \scrA$, there exists a unique flat closed subscheme $\calP_\bbf\subset LG$ such that for every field $k$ one has $\calP_\bbf\otimes k=\calP_{{\bbf,k}}$ as subgroups of $LG\otimes k$. The group scheme $\calP_\bbf$ is a strictly pro-algebraic $\bbZ$-group scheme with connected fibers.
It is called the \emph{parahoric subgroup of $LG$ associated with $\bbf$}.
\xlemm
\pf
By \cite[\S4.2.2]{PappasZhu:Kottwitz} applied with $\bbZ\pot{\varpi}$ as a base ring (cf.~also \cite[Lem.~2.1]{HainesRicharz:Normality}), there exists an algebraic $\bbZ\pot{\varpi}$-group scheme $\calG_\bbf$ with connected fibers such that $\calG_\bbf\otimes \bbZ\rpot{\varpi}=G\otimes\bbZ\rpot{\varpi}$, and $\calG_{\bbf}\otimes k\pot{\varpi}=\calG_{{\bbf,k}}$ for any field $k$. We define $\calP_\bbf$ as the functor on the category of rings given by $R\mapsto \calG_\bbf(R\pot{\varpi})$. Then $\calP_{\bbf}=\lim_{i\geq 0}\calP_{\bbf,i}$ is a pro-algebraic $\bbZ$-group with connected fibers where
\begin{equation}\label{Weil_resitriction}
\calP_{\bbf,i}\defined \on{Res}_{\bbZ_i/\bbZ}(\calG_{\bbf}\otimes_{\bbZ\pot{\varpi}} \bbZ_i),
\end{equation}
for $\bbZ_i=\bbZ[\varpi]/(\varpi^{i+1})$. Note that each $\calP_{\bbf,i}$ is an algebraic $\bbZ$-group with connected fibers, cf.~the proof of \cite[Lem.~2.11 (ii)]{Richarz:AffGrass}, and hence $\calP_\bbf$ has connected fibers as well, cf.~Lemma \ref{lemm--pro.group} i).

In particular, $\calP_{\bbf}\subset LG$ is a flat closed subscheme such that $\calP_\bbf\otimes k=\calP_{{\bbf,k}}$ for any field $k$. This shows existence. Being a flat closed subscheme, $\calP_{\bbf}$ agrees with the flat closure (=scheme-theoretic image) of $\calP_{\bbf,\bbQ}\subset LG\otimes \bbQ$ inside $LG$. This shows uniqueness, and the lemma follows.
\xpf

\rema \label{rema--bounded_subsets}
More generally, for every subset $\Omega\subset \scrA$ whose projection onto the semisimple part $\scrA_{\on{ss}}$ is bounded (cf.~\cite[\S3.4.1]{Tits:Corvallis}), there exists a smooth affine $\bbZ\pot{\varpi}$-group scheme $\calG_\Omega$ with connected fibers by \cite[\S4.2.2]{PappasZhu:Kottwitz}, resp.~\cite[Lem.~2.1]{HainesRicharz:Normality}. The associated $\bbZ$-subgroup schemes $\calP_\Omega\subset LG$ are strictly pro-algebraic with connected fibers, and satisfy the favorable property $\calP_{\Omega}\cap\calP_{\Omega'}=\calP_{\Omega\cup\Omega'}$, cf.~the proof of Lemma \ref{lemm--orbit.flag} i) below.
\xrema

The general notion of parahoric groups relates to Example \ref{exam--parahoric} as follows. We have $\calP_0=L^+G$ for the base point $0\in \scrA$. The group $\calP_{\bba_0}=\calB$ is the standard Iwahori, and the parahoric subgroups from Example \ref{exam--parahoric} ii) correspond to the finitely many facets $\bbf\subset \scrA$ whose closure contains $0$, and which are itself contained in the closure of $\bba_0$. More generally, we have $\calP_{\bbf}\subset \calP_{\bbf'}$ if and only if $\bbf'$ is contained in the closure of $\bbf$.
The following lemma relates to  Proposition \ref{prop--DM.G.homotopy.invariant}, and we record it for later use.

\lemm\label{lemm--affine.proj}  Let $\bbf\subset \scrA$ be a facet. For each $i\geq 0$, the kernel $\ker(\calP_{\bbf,i+1}\to \calP_{\bbf,i})$ is a vector group of dimension $\dim(G/\bbZ)$. In particular, $\ker(\calP_{\bbf}\to \calP_{\bbf,0})$ is split pro-unipotent in the sense of \refde{unipotent}, and each $\calP_{\bbf,i}$ is a cellular $\bbZ$-scheme in the sense of \refde{cellular}.
\xlemm
\pf
By Proposition \ref{prop--vector.extension} (cf.~also Example \ref{exam--groups} iii.a)), each kernel $U_i:=\ker(\calP_{\bbf,i+1}\to \calP_{\bbf,i})$ is a vector group of dimension $d:=\dim(G/\bbZ)$, and thus $\ker(\calP_{\bbf}\to \calP_{\bbf,0})$ is split pro-unipotent.

In order to show that each $\calP_{\bbf,i}$ is cellular, we consider the projection $\calP_{\bbf,i}\to \calP_{\bbf,0}$. This is isomorphic to a relative affine space: by induction on $i$ it is enough to show that the $U_{i}$-torsor $\calP_{\bbf,i+1}\to\calP_{\bbf,i}$ is trivial which holds by \refpr{unipotent}. Hence, the map is on the underlying schemes isomorphic to $\calP_{\bbf,i}\x_{\bbZ} U_{i}\to \calP_{\bbf,i}$. As every vector bundle on $\Spec \bbZ$ can be trivialized,
we get $U_{i}\simeq \bbA^{d}_\bbZ$.
Hence, it is enough to show that $\calP_{\bbf,0}=\calG_{\bbf}\otimes_{\bbZ\pot{\varpi}}\bbZ$ is cellular. By construction \cite[\S4.2.2 (a)]{PappasZhu:Kottwitz}, the special fiber $\calG_{\bbf}\otimes_{\bbZ\pot{\varpi}}\bbZ$ is also given by base changing the schematic root data defining $\calG_{\bbf}$, and thus admits a semidirect product (Levi) decomposition into a split unipotent $\bbZ$-group scheme and a split reductive $\bbZ$-group scheme. As every split reductive $\bbZ$-group scheme is cellular by the Bruhat decomposition (e.g.~\cite[\S13]{Jantzen:Reps} over $\bbZ$), the lemma follows.
\xpf

\rema\label{rema--general.cellular}
\refle{affine.proj} holds with the same proof for the more general subgroups $\calP_\Omega\subset LG$ from \refre{bounded_subsets}. This is needed in \refle{orbit.flag} below in order to control the stabilizers, e.g., of the $L^+G$-action on the affine Grassmannian.
\xrema

For two facets $\bbf,\bbf'\subset \scrA$ and for any field $k$, the combinatorics of the double coset $\calP_{\bbf'}(k)\backslash LG(k)/\calP_\bbf(k)$ are determined by the double classes $W_{\bbf'}\backslash W/W_\bbf$ in the Iwahori-Weyl group as follows, cf.~\S\ref{sect--loop.group.dfn} v) for notation. We have an identification as abstract groups
\begin{equation}\label{IW_Indentify}
W \,=\, \on{Norm}_G(T)(\bbZ\rpot{\varpi})/T(\bbZ\pot{\varpi}).
\end{equation}
This identification is compatible with the decomposition $W=X_*(T)\rtimes W_0$, and works as follows. We have $X_*(T)=T(\bbZ\rpot{\varpi})/T(\bbZ\pot{\varpi}), \la\mapsto \varpi^{-\la}$ as subgroups of \eqref{IW_Indentify} where we refer to \cite[\S3.2]{dHL:Frobenius} for a discussion of the sign in the identification. Further, as $(G,T)$ is split, the Weyl group scheme $\on{Norm}_G(T)/T=\underline{W}_0$ is constant by \cite[Prop 5.1.6]{Conrad:Groups}. Thus, its $\bbZ\pot{\varpi}$-valued (resp.~$\bbZ\rpot{\varpi}$-valued) points identify with $W_0$ as a subgroup (resp.~quotient) of \eqref{IW_Indentify} which gives the above identification compatible with the semidirect product structure.

Further, the subgroup $W_\bbf\subset W$ associated with a facet $\bbf\subset \scrA$ is identified with
\begin{equation}\label{IW_Sub_Indentify}
 W_\bbf\,=\,\left(\on{Norm}_G(T)(\bbZ\rpot{\varpi})\cap \calP_\bbf(\bbZ)\right)/T(\bbZ\pot{\varpi}).
\end{equation}
For example, if $\bbf$ is an alcove, then $W_\bbf=\{*\}$ is trivial. On the other extreme, if $\bbf=0$ is the base point (or any other special point), then $W_0$ is the finite Weyl group.

\lemm \label{lemm--double.orbit}
Let $\bbf, \bbf'\subset \scrA$ be facets. For any field $k$, there is a bijection of sets
\[
W_{\bbf'}\backslash W/ W_\bbf \,\r\, \calP_{\bbf'}(k)\backslash LG(k)/\calP_\bbf(k), \;\; W_{\bbf'}wW_\bbf\mapsto \calP_{\bbf'}(k)\dot{w}\calP_\bbf(k),
\]
where $\dot{w}\in LG(k)$ denotes the image of a representative of $w\in W$ under the map $LG(\bbZ)\to LG(k)$.
\xlemm
\pf Since $T$ is split, the natural map $X_*(T)=\Hom(\bbG_{m,\bbZ},T)\to \Hom(\bbG_{m,k},T\otimes k)=:X_*(T\otimes k)$ is an isomorphism for any field $k$. The discussion above implies that the natural map
\[
W\to \left(\on{Norm}_G(T)(k\rpot{\varpi})\cap \calP_\bbf(k)\right)/T(k\pot{\varpi})=:W_k
\]
is an isomorphism.
For each facet $\bbf$, we also have the subgroup $W_{{\bbf,k}}\subset W_k$ which under the isomorphism $W=W_k$ is identified with $W_\bbf$.
Thus, we obtain $W_{\bbf'}\backslash W/ W_\bbf=W_{\bbf',k}\backslash W_k/ W_{\bbf,k}$.
Now the lemma follows from the work of Bruhat-Tits, cf.~\cite[App., Prop.~8]{PappasRapoport:LoopGroups} (or \cite[Thm.~1.4]{Richarz:Iwahori}).
\xpf

The following important special cases of these double cosets relate to the partial affine flag varieties  introduced in Definition \ref{defi--flag.variety} below.

\exam \label{exam--double.quot}
i) Let $\bbf'=\bba_0$, i.e., $\calP_{\bbf'}=\calB$ is the standard Iwahori subgroup. The $\calB$-orbits on $\Fl_\bbf$ are parametrized by the infinite set $W/W_\bbf$. In particular, if $\bbf=0$ is the base point, i.e., $\Fl_0=\Gr_G$ is the affine Grassmannian, then $W/W_0=X_*(T)$ is the group of cocharacters.\smallskip\\
ii) Let $\bbf'=\bbf=0$, i.e., $\calP_{\bbf'}=\calP_\bbf=L^+G$. The $L^+G$-orbit on $\Gr_G$ are parametrized by the set $W_0\backslash W/W_0=W_0\backslash X_*(T)=X_*(T)_+$ of dominant cocharacters defined in \eqref{dom_weights}.
\xexam

Let us recall some more structure of the group $W$. The choice of the base alcove $\bba_0$ equips $W$ with the structure of a quasi-Coxeter group with length function $l\co W\to \bbZ_{\geq 0}$ and Bruhat-Chevalley partial order ``$\leq$'' as follows. The group $W$ acts on the apartment $\scrA$ by affine linear transformations permuting transitively all alcoves. According to the choice of base alcove $\bba_0$, we have the finite set of {\it simple affine reflections} $\bbS\subset W$ which are given by the reflections along the walls of $\bba_0$. The subgroup $W_{\aff}\subset W$ generated by $\bbS$ is the affine Weyl group in the sense of \cite[VI, \S2.1]{Bourbaki:Lie456} associated with the based root system $R$. In particular, the pair $(W_\aff,\bbS)$ is a Coxeter group. If $\on{Stab}_{\bba_0}=\{w\in W\,|\, w\cdot \bba_0=\bba_0\}$ viewed as a subgroup of $W$, then there is a semidirect product decomposition
\begin{equation}\label{Quasi_Coxeter}
W\,=\, W_{\aff}\rtimes \on{Stab}_{\bba_0}.
\end{equation}
Hence, every element $w\in W$ admits a decomposition
\begin{equation}\label{decomp_element}
w\,=\, s_1\cdot \ldots\cdot s_q\cdot \tau_w,
\end{equation}
for some $s_1,\ldots,s_q\in \bbS$ and unqiue $\tau_w\in \on{Stab}_{\bba_0}$. The decomposition \eqref{decomp_element} is called {\it reduced} whenever the number $q\in \bbZ_{\geq 0}$ is minimal among all decompositions of $w$. Reduced decompositions are not unique, but $q$ only depends on $w$, and not on the choice of reduced decomposition.

The {\it length of $w\in W$} is defined to be the unique number $l(w):=q\in \bbZ_{\geq 0}$ in some reduced decomposition \eqref{decomp_element}. The partial order on $W$ is defined  by the requirement $v\leq w$ if and only if $\tau_v=\tau_w$, and $v$ arises by deleting some of the $s_i$ in a reduced decomposition of $w$.

Let $\bbf,\bbf'\subset \scrA$ be facets contained in the closure of $\bba_0$. Then the quasi-Coxeter structure of $W$ induces on the double classes
\begin{equation}\label{double_classes}
W_{\bbf'}\backslash W/ W_\bbf
\end{equation}
a length function $l=l(\bbf',\bbf)$ and a partial order $\leq\,=\,{^{\bbf'}\!\!\!\!\leq^\bbf}$, cf.~\cite[Lem.~1.6 ff]{Richarz:Schubert}.


\exam\label{exam--length.function.exam}
If $\bbf'=\bbf=0$, then $W_0\backslash W/W_0=X_*(T)_+$, cf.~Example \ref{exam--double.quot} ii). By e.g.~\cite[Cor.~1.8]{Richarz:Schubert}, the length function is computed as
\begin{equation}\label{length_Grass}
l\co X_*(T)_+\,\r\, \bbZ_{\geq 0}, \;\; \mu\mapsto \lan2\rho, \mu\ran,
\end{equation}
where $2\rho:=\sum_{a\in R_+}\! a \in X^*(T)$ is the sum of the positive roots. The partial order on $X_*(T)_+$ specializes to the dominance order described in \eqref{dom_weights}. Similarly, if $\bbf'=\bba_0$, and $\bbf=0$, then $W/W_0=X_*(T)$, and the length function is computed as
\begin{equation}\label{length_Grass_Iwahori}
l\co X_*(T)\,\r\, \bbZ_{\geq 0}, \;\; \mu\mapsto \lan2\rho, \mu^{\on{dom}}\ran-\#\{a\in R^+\;|\; \lan a,\mu\ran <0\},
\end{equation}
where $\mu^{\text{dom}}$ is the unique dominant representative in $W_0\cdot \mu$.
\xexam

\subsection{Stratifications on affine flag varieties}
\label{sect--Stratifications.flag}

Fix two facets $\bbf,\bbf'\subset \scrA$, and denote by $\calP:=\calP_\bbf, \calP':=\calP_{\bbf'}$ the associated parahoric subgroups. 

\defi\label{defi--flag.variety}
The \emph{\textup{(}partial\textup{)} affine flag variety $\Fl=\Fl_{\bbf}$ associated with $\bbf$} is the \'etale sheaf quotient
\[
\Fl\defined (LG/\calP)^\et.
\]
\xdefi

Let us note that the quotient map $LG\to \Fl$ admits sections Zariski locally which follows from \cite[Def.~5 ff.]{Faltings:Loops}. Thus $\Fl(R)=LG(R)/\calP(R)$ for any local ring $R$, and $\Fl$ agrees with the Zariski sheafification of the functor $R\longmapsto LG(R)/\calP(R)$. Here we use that the group $G$ is split.

By \cite[Lem.~2.1 ff]{HainesRicharz:Normality}, the sheaf $\Fl$ is representable by an ind-projective ind-scheme over $\Z$. If $\bbf=\bba_0$ is the base alcove, then $\calP=\calB$ is the standard Iwahori subgroup, and $\Fl$ is {\it the full affine flag variety}. If $\bbf=0$ is the base point, then $\calP_0=L^+G$, and $\Fl=\Gr_G$ is {\it the affine Grassmannian}.
Further, the affine flag variety is equipped with a transitive left action of the loop group
\begin{equation}\label{flag_act}
LG\times \Fl\to \Fl, \;\;\;(g,x)\mapsto g\cdot x.
\end{equation}
The restriction of the $LG$-action to the parahoric subgroup $\calP'$ is well-behaved in the following sense.

\begin{Lemm} \label{lemm--ind.pres}
There exists a $\calP'$-stable presentation $\Fl=\on{colim}_i\Fl_{i}$ where $\Fl_{i}$ are projective $\bbZ$-schemes, and the $\calP'$-action on each $\Fl_{i}$ factors through some $\calP'_{j}$ for $j>\!\!>0$.
\end{Lemm}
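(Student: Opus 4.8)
The plan is to build the required presentation out of the $\calP'$-Schubert schemes and then descend the $\calP'$-action to a finite level by a standard limit argument.

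First I would record the relevant geometry. By \cite[Lem.~2.1~ff]{HainesRicharz:Normality} the ind-scheme $\Fl=(LG/\calP)^\et$ is ind-projective over $\bbZ$, and by \refle{double.orbit} its points over any field decompose into $\calP'$-orbits $\Fl_w$ indexed by the double classes $W_{\bbf'}\backslash W/W_\bbf$. The closure of $\Fl_w$ is the Schubert scheme $\ol{\Fl_w}$, which is the union of the finitely many orbits $\Fl_v$ with $v\leq w$ (in the order of \eqref{double_classes}; intervals in this poset are finite, cf.~\cite{Richarz:Schubert}). Hence for any finite downward closed $S\subset W_{\bbf'}\backslash W/W_\bbf$ the set $\Fl_{\leq S}:=\bigcup_{w\in S}\ol{\Fl_w}$ is a finite-type, hence (being closed in $\Fl$) projective, closed subscheme of $\Fl$; it is $\calP'$-stable because each $\ol{\Fl_w}$ is, and the $\Fl_{\leq S}$ exhaust $\Fl$ as $S$ grows. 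I would then invoke the structure theory of affine flag varieties over $\bbZ$ \cite{PappasRapoport:LoopGroups,HainesRicharz:Normality,Richarz:Schubert} to the effect that these Schubert schemes form a cofinal family of closed subschemes of $\Fl$: starting from any presentation $\Fl=\colim_iY_i$ by projective schemes, each $Y_i$ is quasi-compact, hence contained in some $\Fl_{\leq S_i}$, and after passing to an increasing exhaustive chain of the $\Fl_{\leq S_i}$ one obtains a $\calP'$-stable presentation $\Fl=\colim_i\Fl_i$ by projective $\bbZ$-schemes.

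For the second assertion fix $i$ and put $Z:=\Fl_i$, a scheme of finite presentation over $\bbZ$ carrying the action morphism $a\co\calP'\times_\bbZ Z\to Z$. Each transition map $\calP'_{j+1}\to\calP'_j$ is a morphism of affine finite-type $\bbZ$-schemes (its kernel is a vector group by \refle{affine.proj}), hence affine of finite presentation; since $-\times_\bbZ Z$ preserves limits, $\calP'\times_\bbZ Z=\lim_j(\calP'_j\times_\bbZ Z)$ is a cofiltered limit with affine transition maps. As $Z$ is of finite presentation over $\bbZ$, the canonical map $\colim_j\Mor_\bbZ(\calP'_j\times_\bbZ Z,Z)\to\Mor_\bbZ(\calP'\times_\bbZ Z,Z)$ is a bijection, so $a$ factors through $\calP'_j\times_\bbZ Z$ for some $j=j(i)$, say as $\bar a\co\calP'_j\times_\bbZ Z\to Z$; enlarging $j$ if necessary, the associativity and unit identities for $\bar a$ follow from those for $a$ by the same argument, so $\bar a$ is an action of $\calP'_j$ on $\Fl_i$. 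The transition closed immersions $\Fl_i\hookrightarrow\Fl_{i'}$ are then equivariant for the relevant finite-level groups by uniqueness of the factorizations.

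The step I expect to be the main obstacle is the cofinality of the $\calP'$-Schubert schemes in the first paragraph --- equivalently, that every quasi-compact subscheme of $\Fl$ lies in some $\Fl_{\leq S}$; this does not follow from anything developed so far in the excerpt and rests on the known structure of affine flag varieties over $\bbZ$. Granting it, the descent of the action in the second paragraph is routine, the only points needing care being that the transition maps in $\calP'=\lim_j\calP'_j$ are affine (so that the limit formalism for morphisms applies) and that one may have to pass to a larger $j$ once more in order to recover the group-action identities.
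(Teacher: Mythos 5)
The step you flag as the ``main obstacle'' is indeed where the argument breaks, and not merely because it is left as a citation: the cofinality claim is false in the generality of the lemma. The finite unions $\Fl_{\leq S}$ of Schubert schemes are scheme-theoretic images of the (reduced, pro-smooth) group $\calP'$, hence are reduced closed subschemes, so their colimit can only be the reduced sub-ind-scheme $\Fl_\red$; this is exactly why the paper states the Schubert presentation only on reduced loci in \eqref{Schubert_praesi}. But $\Fl$ itself need not be reduced for a general split reductive $G$ over $\bbZ$: already for $G=\bbG_m$ one has $\Gr_{\bbG_m}(k[\epsilon])\supsetneq \Gr_{\bbG_m}(k)$ over the base point (e.g.\ the class of $1+\epsilon t^{-1}$), while every Schubert scheme is a reduced point. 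So ``every quasi-compact subscheme of $\Fl$ lies in some $\Fl_{\leq S}$'' fails, and your first paragraph only produces a $\calP'$-stable presentation of $\Fl_\red$, not of $\Fl$ as the lemma requires. (There is also an internal ordering issue: in the paper the Schubert schemes, the closure relations $\ol{\Fl^w}=\bigcup_{v\le w}\Fl^v$ and the finiteness of lower intervals are established \emph{after} this lemma, in \refde{Schubert} and \refle{orbit.flag}, and the definition of $\Fl^{\leq w}$ itself uses the lemma; so even on $\Fl_\red$ your route would rest on nontrivial input that is not available at this point.)

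The paper avoids the orbit structure entirely: since $\Fl$ is ind-projective, it is of ind-finite type over the Noetherian base $\bbZ$, and the lemma is a special case of \refle{adm.finite.type}. Concretely, start from \emph{any} presentation $\Fl=\colim_j X_j$ by projective $\bbZ$-schemes; because $\calP'$ is affine, $\calP'\x_\bbZ X_j$ is quasi-compact, so the action map restricted to it factors through some $X_{j'}$, and one replaces $X_j$ by the scheme-theoretic image $X_j'$ of $\calP'\x_\bbZ X_j\to X_{j'}$. This $X_j'$ is a closed subscheme of the projective $X_{j'}$ (hence projective), contains $X_j$, and is $\calP'$-stable by flatness of $\calP'$ over $\bbZ$ (scheme-theoretic images along quasi-compact maps commute with flat base change, as in the proof of \refle{adm.strata}); the $X_j'$ then give the desired presentation with no reducedness caveat. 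Your second paragraph --- descending the action on each finite-type piece to a finite level $\calP'_j$ via the finite-presentation limit argument, enlarging $j$ to recover the action axioms --- is correct and is essentially the (terser) second half of the paper's proof of \refle{adm.finite.type}; only the construction of the stable exhaustion needs to be replaced as above.
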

\begin{proof} The lemma immediately follows from Lemma \ref{lemm--adm.finite.type} because $\Fl$ is ind-projective over $\bbZ$, and in particular of ind-finite type.
\end{proof}


\defi\label{defi--Schubert}
For $w\in W_{\bbf'}\backslash W/W_\bbf$, the {\it Schubert scheme} $\Fl^{\leq w}={^{\bbf'}\!\!\Fl}_\bbf^{\leq w}$ is the scheme-theoretic image of the map
\begin{equation}\label{Schubert_Map}
\calP'\to \Fl,\;\; p'\mapsto p'\cdot \dot{w}\cdot e,
\end{equation}
where $e\in \Fl(\bbZ)$ is the base point, and $\dot{w}\in LG(\bbZ)$ is a representative of $w$.
\xdefi

Let us justify the definition. It is clear from \eqref{IW_Sub_Indentify} that $\Fl^{\leq w}$ is independent of the choice of $\dot{w}$. Write $\Fl=\on{colim}_i\Fl_{i}$ as in Lemma \ref{lemm--ind.pres}. Then there exists an $i>\!\!>0$ with $\dot{w}\in \Fl_{i}(\bbZ)$, and hence the map \eqref{Schubert_Map} factors through $\Fl_{i}$ defining a map $\calP'\to \Fl_{i}$ of quasi-compact separated schemes (hence a quasi-compact map). By \StP{01R8}, its scheme-theoretic image $\Fl^{\leq w}$ is the closed subscheme of $\Fl$ defined by the quasi-coherent ideal sheaf $\on{ker}(\calO_{\Fl_i}\to \calO_{\calP'})$, and in particular, a projective $\bbZ$-scheme.

Usually, Schubert schemes (resp.~Schubert varieties) in partial affine flag varieties are defined over fields as reduced orbit closures \cite[Def.~8.3]{PappasRapoport:LoopGroups}.
In the case of fields, the definition of Schubert varieties as reduced orbit closures agrees with the definition via scheme theoretic images as in \refde{Schubert}.
However, it is the latter notion which behaves well over more general base schemes, cf.~\S\ref{sect--chevalley.base.change} below.
Here is the relation between Schubert varieties over fields with Schubert schemes over the integers.

\lemm\label{lemm--base.change.schubert.field}
For any field $k$, the underlying reduced locus of the base change $\Fl^{\leq w}\otimes_\bbZ k$ is the Schubert variety over $k$ associated with the element $w\in W_{\bbf'}\backslash W/W_{\bbf}$.
\xlemm
\pf
By \refle{double.orbit}, we obtain for each class $w$ a unique Schubert variety $\Fl_k^{\leq w}$ over $k$.
We claim that the natural closed immersion $\Fl_k^{\leq w}\subset \Fl^{\leq w}\otimes_\bbZ k$ is an isomorphism on reduced loci.
We need to show that this map is an equality on the underlying topological spaces.
For this fix a reduced expression of $w$ as in \eqref{decomp_element}, and consider the Demazure resolution $D(w)\to  \Fl^{\leq w}$ over the integers \cite[Def.~5 ff.]{Faltings:Loops}.
Since $D(w)\to  \Fl^{\leq w}$ is surjective, the base change $D(w)\otimes_\bbZ k\to  \Fl^{\leq w}\otimes_\bbZ k$ is surjective as well \StP{01S1}.
Since $D(w)\to \Spec(\bbZ)$ is smooth, its formation commutes with base change so that $D(w)\otimes_\bbZ k\to \Fl_k^{\leq w}$ is the Demazure resolution over $k$.
Thus, $\Fl_k^{\leq w}\subset \Fl^{\leq w}\otimes_\bbZ k$ is surjective, i.e., an equality on the underlying topological spaces.
\xpf

The following lemma is basic for the study of $\calP'$-orbits in $\Fl^{\leq w}$.

\lemm \label{lemm--orbit.flag}
Let $w\in W_{\bbf'}\backslash W/W_\bbf$.\smallskip\\
i\textup{)} The stabilizer of $\calP'$ in $\dot{w}\cdot e\in \Fl(\bbZ)$ is representable by a closed $\bbZ$-subgroup $\calP_w'\subset \calP'$. It is strictly pro-algebraic $\calP_w'=\lim_i\calP_{w,i}'$, and the scheme underlying each $\calP_{w,i}'$ is fibrewise connected and cellular.\smallskip\\
ii\textup{)} The \'etale sheaf-theoretic image
\[
{\Fl}^w:=\calP'\cdot \dot{w}\cdot e\,\subset\, \Fl^{\leq w},
\]
is representable by an open subscheme which is smooth, fibrewise geometrically connected and dense over $\Z$. It agrees with the \'etale quotients $\Fl^w= (\calP'/\calP'_w)^\et= (\calP'_i/\bar{\calP}'_{w,i})^\et$ for $i>\!\!>0$ where $\bar{\calP}'_{w,i}\subset \calP'_i$ is a fibrewise connected smooth cellular closed $\bbZ$-subgroup scheme.\smallskip\\
iii\textup{)} For each $v\leq w$, there is a quasi-compact immersion $\Fl^v\to \Fl^{\leq w}$, and one has as sets
\[
\Fl^{\leq w}\,=\,\bigsqcup_{v\leq w}\Fl^v,
\]
where ``$\leq$'' denotes the partial order on $W_{\bbf'}\backslash W/W_{\bbf}$.
\xlemm
\begin{proof} For i), the stabilizer is given by the closed $\bbZ$-subgroup $\calP_{\bbf'}\cap \calP_{w\bbf}\subset LG$ where we used that $\calP_{w\bbf}=\dot{w}\calP_{\bbf}\dot{w}^{-1}$. As in Remark \ref{rema--bounded_subsets}, we have for any subset $\Omega\subset \scrA$ whose projection onto the semisimple part $\scrA_{\on{ss}}$ is bounded, an algebraic $\bbZ\pot{\varpi}$-group scheme $\calG_\Omega$ with connected fibers. We apply this to $\Omega=\bbf'\cup w\bbf$, and we claim that one has $\calP_{\bbf'}\cap \calP_{w\bbf}\,=\, \calP_{\bbf'\cup w\bbf}$ as closed subgroups of $LG$. This implies i) because $\calP_\Omega$ is the strictly pro-algebraic $\bbZ$-group scheme given by the functor $R\mapsto \calG_\Omega(R\pot{\varpi})$, and satisfies the asserted properties by \refle{affine.proj} and \refre{general.cellular}. It remains to prove the claim. Recall that, infact for any $\Omega$, the group scheme $\calG_\Omega$ is constructed from the rational group law on the big open cell
\[
\calG_\Omega^o:=\calU_\Omega^-\x\calT\x \calU_\Omega^+\,\to\, \calG_\Omega, \;\;\; (u^-,t,u^+)\mapsto u^-\cdot t\cdot u^+,
\]
where $\calT:=T\otimes_\bbZ\bbZ\pot{\varpi}$, and $\calU_\Omega^\pm$ are split unipotent algebraic $\bbZ\pot{\varpi}$-group schemes, cf.~\cite[\S4.2.2]{PappasZhu:Kottwitz} applied with $\bbZ\pot{\varpi}$ as a base ring. Here $\calG_\Omega^o\subset \calG_\Omega$ is an open subscheme which is fibrewise dense over $\bbZ\pot{\varpi}$ and carries a rational group law compatible with the group law on $\calG_\Omega$. Denote by $\calP_\Omega^o$ the functor $R\mapsto \calG_\Omega^o(R\pot{\varpi})$. We claim that $\calP_\Omega^o\subset \calP_\Omega$ is relatively representable by an open immersion which is fibrewise dense over $\bbZ$. Indeed, the same construction as in \eqref{Weil_resitriction} applies so that $\calP_\Omega^o=\lim_{i\geq 0}\calP_{\Omega,i}^o$ is an inverse limit of smooth affine $\bbZ$-schemes. Note that each $\calP_{\Omega,i}^o\subset \calP_{\Omega,i}$ is an open subscheme which is fibrewise dense over $\bbZ$. In passing to the limit, we need to make sure that the open subsets are not getting too small. Since $\calP_{\Omega,i}^o$ contains the unit section, the construction in the proof of Proposition \ref{prop--vector.extension} below applies to show that $\ker(\calP_{\Omega,i}^o\to \calP_{\Omega,i-1}^o)=\ker(\calP_{\Omega,i}\to \calP_{\Omega,i-1})$ for all $i\geq 1$ (the left hand side are by definition the sections which map to the unit section). Here we used that under the open immersion $\calG_{\Omega}^o\subset \calG_\Omega$ the associated vector bundles \eqref{vector_bundle} agree. It follows that $\ker(\calP_{\Omega}^o\to \calP_{\Omega,0}^o)=\ker(\calP_{\Omega}\to \calP_{\Omega,0})$ which implies the claim. Now from the construction of $\calG_\Omega^o$ it is immediate that $\calP_{\bbf'}^o\cap \calP_{w\bbf}^o=\calP_{\bbf'\cup w\bbf}^o$. Thus, the natural closed immersion $\calP_{\bbf'\cup w\bbf}\subset \calP_{\bbf'}\cap \calP_{w\bbf}$ is an equality over the open and fibrewise dense subscheme $\calP_{\bbf'\cup w\bbf}^o$. This immediately implies $\calP_{\bbf'\cup w\bbf}=\calP_{\bbf'}\cap \calP_{w\bbf}$, and shows i).

Part ii) is proven the same way as \cite[Cor.~3.14]{Richarz:AffGrass}. In the reference, the base is a discrete valuation ring, but the same argument works over the Dedekind ring $\bbZ$ as well. Also we note that for $i>\!\!>0$, the split pro-unipotent kernel $U_i=\ker(\calP'\r\calP_i')$ lies in $\calP'_w$, so that $\bar{\calP}'_{w,i}:=(\calP'_w/U_i)^\et\subset \calP_i'$ is fibrewise connected, cellular and smooth over $\bbZ$.

Part iii) is deduced the same way as for example in \cite[Prop.~2.8 (i)]{Richarz:Schubert} (which is over fields) using the existence of Demazure resolutions.
\end{proof}

Now assume that $\bbf,\bbf'\subset \scrA$ are contained in the closure of the base alcove $\bba_0$, and consider the double classes $W_{\bbf'}\backslash W/W_\bbf$ equipped with its length function $l$ and partial order $\leq$ as in \eqref{double_classes} above.
By Lemma \ref{lemm--orbit.flag}, there is a presentation of the underlying reduced ind-scheme
\begin{equation}\label{Schubert_praesi}
\Fl_\red\,=\,\on{colim}_w\Fl^{\leq w},
\end{equation}
where $w$ runs over the partial ordered set $W_{\bbf'}\backslash W/W_\bbf$. The following proposition equips $\Fl$ with a cellular stratification.

\prop\label{prop--cells.flag}
Assume $\bbf'=\bba_0$, and let $w\in W/W_\bbf$. \smallskip\\
i\textup{)} There is an isomorphism of schemes $\Fl^w\simeq \bbA_\bbZ^{l(w)}$ where $l(w)\in \bbZ_{\geq 0}$ is the length function on $W/W_\bbf$.\smallskip\\
ii\textup{)} The quotient map $LG\to \Fl$ has sections over $\Fl^w$.
 \xprop
\pf
For any affine root $\al=a+k\in \calR$, there is the root homomorphism $u_\al\co \bbG_a\to LG, x\mapsto u_a(x z^k)$, see e.g.~\cite[\S3.5]{dHL:Frobenius}. The reference is written over a field, but the same formulas work over $\bbZ$ as well, cf.~also \cite[\S5.1]{Conrad:Groups} for root subgroups in the relative set-up. Then $u_\al$ is a closed immersion, and we let $U_\al\subset LG$ be its image which is a closed $\bbZ$-subgroup scheme of $LG$ isomorphic to the additive group $\bbG_{a}=\bbA^1_\bbZ$. We consider the map
\begin{equation}\label{Root_Groups}
\pi\co \bigsqcap_{\al} U_\al \to \Fl^w,\;\;\;\; (u_\al)_\al\mapsto \left(\sqcap_\al u_\al\right)\cdot \dot{w}\cdot e,
\end{equation}
where the product (taken in any fixed order) ranges over all affine roots $\al\in \calR$ such that $(w\al)|_{\bba_0}$ takes positive values and $\al|_\bbf$ takes negative values. We claim that $\pi$ is an isomorphism of $\bbZ$-schemes. Indeed, as source and target are smooth $\bbZ$-schemes of finite type, the fibral isomorphism criterion from \cite[I.5, Prop.~5.7]{SGA1} reduces us to prove that $\pi\otimes k$ is an isomorphism for any (prime) field $k$. This is well-known, cf.~e.g.~\cite[Prop.~3.7.4, (3.32)]{dHL:Frobenius}. Also note that the number of such roots as above is $l(w)$ for the length taken on $W/W_\bbf$. This implies i). Part ii) also follows because  the sections are given by $\pi^{-1}$ composed with the closed immersion $\sqcap_\al U_\al\to LG$, $(u_\al)_\al\mapsto (\sqcap_\al u_\al)\cdot \dot{w}$. This concludes the proof of the proposition.
\xpf

\rema Proposition \ref{prop--cells.flag} ii) fails if $\bbf'$ is not an alcove, i.e., $\calP_{\bbf'}$ strictly contains the standard Iwahori $\calB$. Indeed, if the map $LG\to \Fl$ has a section over $\Fl^w$, then $\Fl^w$ must necessarily be affine (because $LG$ is ind-affine). However, whenever $\bbf'$ is not an alcove, there exists $w\in W$ such that $\Fl^w$ is not affine. As an example consider $G=\GL_2$, and take $\bbf'=\bbf=0$. Then, for $\mu=(1,0)\in X_*(T)_+$, we have $\Gr_G^{\leq \mu}=\P_\bbZ$.
\xrema

\coro
\label{coro--Fl.stratification}
Let $\Fl^+:=\sqcup_{w\in W_{\bbf'}\setminus W/W_\bbf}\Fl^w$. Then the inclusion $\iota\co \Fl^+\to \Fl$ is a cellular stratified ind-scheme in the sense of Definition \ref{defi--cellular}. For $\bbf' = \bba_0$, we refer to this stratification on $\Fl$ as the {Iwahori stratification}.
\xcoro
\pf

Each $\calP'$-orbit $\Fl^w$ is a smooth $\bbZ$-scheme with geometrically connected fibers by Lemma \ref{lemm--orbit.flag} ii), and it decomposes in Iwahori orbits as
\[
\Fl^w\,=\, \bigsqcup_{v\in W_{\bbf'} wW_\bbf/W_\bbf}\Fl^v,
\]
where each $\Fl^v$ is isomorphic to an affine space, cf.~Proposition \ref{prop--cells.flag} i). Thus each $\Fl^w$ is a cellular $\bbZ$-scheme. Further, by \eqref{Schubert_praesi} together with Lemma \ref{lemm--orbit.flag} iii), the map $\iota$ is bijective on the underlying topological spaces.
As each restriction $\iota|_{\Fl^w}$ is the composition $\Fl^w\subset \Fl^{\leq w}\subset \Fl$ of a quasi-compact open immersion followed by a closed immersion, it is a quasi-compact immersion.
Also we have for the closure $\overline{\Fl}^w=\Fl^{\leq w}$ by Lemma \ref{lemm--orbit.flag} ii). This implies the corollary.
\xpf

Now assume further that $\calB\subset \calP'\subset\calP$, i.e., $\bbf$ is contained in the closure of $\bbf'$. We end this subsection by investigating the behavior of the Iwahori stratification under the projection $\pi\co \Fl_{\bbf'}\to \Fl_{\bbf}$. The following proposition relates to  \refle{Tate.proper.descent} and \refle{smooth.detects.Tate}.

\prop \label{prop--change.facet}
i\textup{)} The projection $\pi\co \Fl_{\bbf'}\to \Fl_{\bbf}$ is representable by a smooth proper surjective map which is \'etale locally on the target isomorphic to the projection $\calP/\calP'\x \Fl_{\bbf}\to \Fl_\bbf$.\smallskip\\
ii\textup{)} The induced map on the Iwahori stratifications $\pi^+\co \Fl_{\bbf'}^+\to \Fl_{\bbf}^+$ is a Tate map, and admits a section $s^+\co \Fl_{\bbf}^+\to \Fl_{\bbf'}^+$ which is an open and closed immersion.
\xprop
\pf For i) we refer to the proof of \cite[Lem 4.9 i)]{HainesRicharz:TestFunctions} for details. For ii), first let $\bbf'=\bba_0$ be the base alcove, and abbreviate $\Fl=\Fl_{\bbf'}$. For $w\in W/W_{\bbf}$, we have
\[
(\pi^+)^{-1}(\Fl_{\bbf}^w)\,=\, \bigsqcup_{v\in wW_{\bbf}} \Fl^v
\]
There exists a unique element $w_{\min}\in wW_{\bbf}$ of minimal length, cf.~e.g.~\cite[Lem 1.6 (i)]{Richarz:Schubert}. Thus, every $v\in wW_{\bbf}$ can be written uniquely in the form $v=v_0\cdot w_{\min}$. It follows from \eqref{Root_Groups} that the restriction $\pi^+|_{\Fl^v}\co \Fl^v\to \Fl_{\bbf}^w$ has the structure of a relative affine space of relative dimension $l(v_0)$. In particular, $\pi^+$ is a Tate map, and $\pi^+|_{\Fl^{w_{\min}}}$ is an isomorphism. The desired section is given by
\[
s^+\co \Fl_{\bbf}^+=\bigsqcup_{w\in W/W_{\bbf}}\Fl_{\bbf}^w\;\overset{\simeq}{\longleftarrow}\; \bigsqcup_{w\in W/W_{\bbf}}\Fl^{w_{\min}}\subset \bigsqcup_{w\in W}\Fl^w=\Fl^+.
\]
The case of more general facets $\bbf'$ is reduced to this case by considering the projections $\Fl_{\bba_0}\to \Fl_{\bbf'}\to \Fl_{\bbf}$. This implies ii), and proves the proposition.
\xpf

\exam \label{exam--simple.reflection}
Let $s\in \bbS$ be a simple affine reflection. Then there is a unique facet $\bbf_s$ of maximal dimension in the closure of $\bba_0$ such that $s(\bbf_s)=\bbf_s$, i.e., $W_{\bbf_s}$ is the subgroup generated by $s$. We specialize Proposition \ref{prop--change.facet} to the case $\pi\co \Fl\to \Fl_{\bbf_s}$ so that $\calB=\calP\subset \calP'=\calP_{\bbf_s}$. In this case, the map $\pi$ has general fiber $(\calP_{\bbf_s}/\calB)^\et=\P_\bbZ$. If $w=v\cdot s$ is a reduced decomposition so that $v=w_{\min}$, then
\[
(\pi^+)^{-1}(\Fl_{\bbf_s}^v)\,=\,\Fl^{v}\sqcup\Fl^{vs}.
\]
Here $\pi^+|_{\Fl^v}$ is an isomorphism, and $\pi^+|_{\Fl^{vs}}$ is an affine space of relative dimension $1$.
\xexam

\subsection{Changing the base scheme}\label{sect--chevalley.base.change}

Let $S$ be any non-empty scheme. We change notation, and let $G$ be a split reductive $S$-group scheme, i.e., a smooth $S$-affine $S$-group whose fibers are connected reductive groups, and which admits a maximal split torus, cf.~\cite[Def 5.1.1]{Conrad:Groups} for a precise definition. Recall that by  the Isomorphism Theorem \cite[Thm 6.1.17]{Conrad:Groups} the group $G$ is already defined over $\bbZ$, i.e., there exists a Chevalley group $G_\bbZ$ such that $G=G_\bbZ\x_{\Spec{\bbZ}} S$ (we fix the isomorphism). We also fix $T_\bbZ\subset B_\bbZ\subset G_\bbZ$ as in \eqref{Chevalley_Triple}, and let $T\subset B\subset G$ be the base change to $S$.
We denote by $\scrA$ (resp. $W$) the apartment (resp. Iwahori-Weyl group) associated with $(G_\bbZ,T_\bbZ)$.
The definitions and constructions from \S\ref{sect--loop.definitions} and \S\ref{sect--Stratifications.flag} generalize to general base schemes as follows.

The loop group $LG$ is the functor given by $LG(R)=G(R\rpot{\varpi})$ for $\Spec(R)\in\AffSch_S$. Clearly, we have $LG=LG_\bbZ\x_{\Spec(\bbZ)}S$.
Likewise, for any facet $\bbf\subset \scrA$ the parahoric subgroup $\calP_\bbf\subset LG$ is defined by base change from $\bbZ$. In particular, $\calP_\bbf=\lim_{i\geq 0}\calP_{\bbf,i}$ is a strictly pro-algebraic $S$-group with geometrically connected fibers.
The partial affine flag variety $\Fl_\bbf$ is the \'etale sheaf associated with the functor $R\mapsto LG(R)/\calP_\bbf(R)$ for $\Spec(R)\in \AffSch_S$.
Since sheafification commutes with base change, we see that $\Fl_\bbf=\Fl_{\bbf,\bbZ}\x_{\Spec(\bbZ)}S\to S$ is the base change from $\bbZ$ as well, and in particular ind-projective.

Let $\bbf,\bbf'\subset \scrA$ be two facets, and abbreviate $\calP':=\calP_{\bbf'}$ and $\Fl:=\Fl_\bbf$.
For any $S$-scheme $T$, we write $LG(T):=\Hom_S(T,LG)$, and likewise for $\Fl$.

\defi\label{defi--Schubert.scheme}
For $w\in W_{\bbf'}\backslash W/W_\bbf$, the {\it Schubert scheme $\Fl^{\leq w}={^{\bbf'}\!\!\Fl}_\bbf^{\leq w}$ over $S$} is the scheme-theoretic image of the map
\begin{equation}\label{Schubert_Map_Rel}
\calP'\to \Fl,\;\; p'\mapsto p'\cdot \dot{w}\cdot e,
\end{equation}
where $e\in \Fl(S)$ is the base point, and $\dot{w}\in LG(S)$ is the image of a representative of $w$ under the map $LG(\bbZ)\to LG(S)$.
\xdefi

As in Definition \ref{defi--Schubert} one sees that $\Fl^{\leq w}\subset \Fl$ defines a closed subscheme whose underlying topological space coincides with the closure of the topological image of \eqref{Schubert_Map_Rel}.

\prop
\label{prop--Schubert.base.change}
Let $\Fl_\bbZ^{\leq w}\subset \Fl_\bbZ$ be the Schubert scheme over $\bbZ$.
Then the natural closed immersion $\Fl^{\leq w}\subset \Fl_\bbZ^{\leq w}\x_{\Spec(\bbZ)}S$ is a Nil thickening.
\xprop
\pf
We first note that the proposition is obvious whenever $S\to \Spec(\bbZ)$ is flat because scheme-theoretic images along quasi-compact maps commute with flat base change, cf.~\StP{01R8}.
If $S\to \Spec(\bbZ)$ is not necessarily flat, we have to show that $\Fl^{\leq w}\subset \Fl_0^{\leq w}\x_{\Spec(\bbZ)}S$ is an equality on topological spaces.
By functoriality of the scheme theoretic image \StP{01R9}, for every field $\Spec(k)\to S$ we have closed immersions $\Fl_k^{\leq w}\subset \Fl^{\leq w}\x_{\Spec(S)}\Spec(k)\subset \Fl_\bbZ^{\leq w}\x_{\Spec(\bbZ)}\Spec(k)$ where $\Fl_k^{\leq w}$ denotes the Schubert variety over $k$.
Hence, \refle{base.change.schubert.field} implies the claim.
\xpf

The canonical closed immersion
\begin{equation}\label{Schubert_praesi_rel}
\on{colim}_w\Fl^{\leq w}\,\hookrightarrow\, \Fl,
\end{equation}
is a Nil thickening, and hence an isomorphism on the underlying reduced loci. In particular, \eqref{Schubert_praesi_rel} induces an equivalence on the categories of motives. For completeness, we remark that the Schubert scheme $\Fl^{\leq w}$ is non-reduced if $S$ is non-reduced so that we need to pass to the underlying reduced loci on both sides in \eqref{Schubert_praesi_rel} to get an isomorphism.

For $w\in W_{\bbf'}\backslash W/W_\bbf$, we define $\Fl^w$ as the \'etale sheaf image of \eqref{Schubert_Map_Rel}. Since sheaf theoretic images commute with base change, we see that $\Fl^w$ is the base change from $\bbZ$. In particular, $\Fl^w\subset \Fl^{\leq w}$ is an open subscheme which is smooth, fibrewise dense and geometrically connected over $S$. Now all results of Lemma \ref{lemm--orbit.flag}, Proposition \ref{prop--cells.flag}, Corollary \ref{coro--Fl.stratification}, Proposition \ref{prop--change.facet} and Example \ref{exam--simple.reflection} translate literally to the general context by base change $S\to \Spec(\bbZ)$.

\section{Mixed Tate motives on affine flag varieties}\label{sect--DTM.Fl}


\nota
\label{nota--BS.vanishing.ladic}
Throughout \refsect{DTM.Fl}, we assume $S$ is as in \refno{S} and satisfies furthermore the Beilinson-Soulé vanishing conjecture as in \textup{\refeq{BS.vanishing}}.
We also assume $S$ admits an $\ell$-adic realization functor in the sense of \refre{realization.functor}.
Examples include finite fields $\Fq$, function fields $\Fq(t)$, number fields $F$, and their algebraic \textup{(}separable/perfect\textup{)} closures.
Further examples are the ring of algebraic integers $\calO_F$, and smooth curves over finite fields.
\xnota

W fix $T\subset B\subset G$ over $S$ as in \S\ref{sect--chevalley.base.change}. We let $\bbf,\bbf'\subset \scrA$ be two facets which are contained in the closure of the base alcove $\bba_0$.
Their associated parahoric groups are denoted $\calP_{\bbf}, \calP_{\bbf'} \subset LG$.
In the following, we use \S\ref{sect--chevalley.base.change} without explicit reference in order to apply the results from \S\S\ref{sect--loop.definitions}-\ref{sect--Stratifications.flag}.
Throughout, $w$ denotes an element in $W_{\bbf'} \backslash W / W_\bbf$; thus, $w$ parametrizes the orbits of the left action of the pro-algebraic $S$-group $\calP_{\bbf'}$ on the ind-scheme $\Fl_\bbf$.
The inclusion of such an orbit is denoted $\iota_w : \Fl_\bbf^w \r \Fl_\bbf$. By \refco{Fl.stratification}, these orbits yield a cellular stratification which is denoted
$$\iota: \bigsqcup_{w \in W_{\bbf'} \setminus W / W_\bbf} \Fl_\bbf^w \r \Fl_\bbf.\eqlabel{iota.Fl}$$

In this section, we apply the results of \refsect{DTM} to partial affine flag varieties and obtain a category of Tate motives on the double quotient $\calP_{\bbf'} \backslash LG / \calP_\bbf$.
Also, if $\bbf'=\bbf$, then we obtain an abelian subcategory of mixed motives on $\calP_{\bbf} \backslash LG / \calP_\bbf$.
The latter category contains the intersection motives $\IC_w \in \DM(\calP_{\bbf} \backslash LG / \calP_\bbf)$ that will be used in \refsect{intersection} to construct the intersection motive of the moduli stack of $G$-shtukas.

\subsection{Whitney-Tate stratifications on partial affine flag varieties}\label{sect--WT.Fl}

\theo
\label{theo--Fl.WT}
The stratification \textup{\refeq{iota.Fl}} is a Whitney-Tate stratification.
\xtheo

In \cite[Prop.~4.10]{SoergelWendt:Perverse}, Soergel and Wendt prove the analogous statement for the Borel orbit stratification in partial flag varieties over fields. Thus, \refth{Fl.WT} generalizes their result in three ways. We work with infinite-dimensional partial affine flag varieties in which the finite-dimensional partial flag varieties can be embedded compatibly with the Iwahori respectively Borel stratifications. We allow the stratification into $\calP_{\bbf'}$-orbits instead of merely Iwahori orbits. We work over more general base schemes; for example $S=\Spec(\bbZ)$ is allowed.
The last feature will be used in \cite{RicharzScholbach:Satake} to transfer the purity of the intersection motives $\IC_w$ from the case that $S$ has positive characteristic to the case that $S$ has characteristic zero.
The proof of \refth{Fl.WT} proceeds in three steps; the first two, i.e., the case of the Iwahori stratification, are an extension of the arguments \cite[Prop.~4.10]{SoergelWendt:Perverse} to the affine flag variety.
We also point out that Habibi \cite[Cor.~5.4.12]{Habibi:Motive} has shown that the motive of affine Schubert varieties $\Fl^{\le w}_\bbf$ is a Tate motive provided that the Demazure resolution is semi-small.

\begin{proof}[Proof of \refth{Fl.WT}]
{\em First step: $\bbf'=\bbf=\bba_0$ is the base alcove.}
Write $\Fl := \Fl_{\bba_0}$, and $\calB:=\calP_{\bba_0}$ for the Iwahori group.
We have to show $\iota^! \iota_! 1 \in \DTM(\Fl^+)$, i.e., the restriction to each stratum $\Fl^w$ is a Tate motive. By induction on the length $l(w)$, we show that
$$\iota^! (\iota_w)_! 1\,\in\, \DTM(\Fl^+).$$
If $l(w)=0$, then necessarily $w\in \on{Stab}_{\bba_0}$ by \eqref{Quasi_Coxeter} so that $\iota_w$ is a closed immersion.

If $l(w)>0$, there is a decomposition $w=vs$ for some simple reflection $s\in\bbS$ such that $l(v)=l(w)-1$. As in Example \ref{exam--simple.reflection} we denote by $\calB\subset \calP_{\bbf_s} \subset LG$ the parahoric subgroup associated with $s\in \bbS$, and by $\Fl_{\bbf_s}$ the corresponding partial affine flag variety. The projection $\pi\co \Fl\to \Fl_{\bbf_s}$ is smooth and proper with fibre \'etale locally isomorphic to $(\calP_{\bbf_s}/\calB)^\et=\P_S$, and the induced map on the Iwahori stratifications $\pi^+\co \Fl^+\to \Fl_{\bbf_s}^+$ is a Tate map which admits a section, cf.~\refpr{change.facet} and \refex{simple.reflection}. We consider the commutative diagram
$$\xymatrix{
\Fl^v \sqcup \Fl^{w} \ar[r] \ar[dr]_{\pi^+} & \pi^{-1}(\Fl_{{\bbf_s}}^v) \ar[d] \ar[r] & \Fl \ar[d]^{\pi} \\
& \Fl_{{\bbf_s}}^v \ar[r] & \Fl_{{\bbf_s}}.
}$$
Then $\pi^+|_{\Fl^v}$ is an isomorphism, and $\pi^+|_{\Fl^{w}}$ is a relative $1$-dimensional affine space. The localization sequence for $\Fl^v \r \pi^{-1}(\Fl_{{\bbf_s}}^v) \gets \Fl^{w}$ therefore gives a fiber sequence
$$(\iota_w)_! 1 \r \pi^* \pi_{*} (\iota_v)_! 1 \r (\iota_v)_! 1.\eqlabel{iota.v.w}$$
If we apply $\iota^!$, the right hand term lies in $\DTM(\Fl^+)$ by induction, the middle term therefore also, by \refle{Tate.up.down} and the smoothness of $\pi$ (which allows to replace $\pi^*$ by $\pi^!$).
Hence the left hand term is also a Tate motive.\smallskip\\
{\em Second step: $\bbf'=\bba_0$ is the base alcove, and $\bbf$ arbitrary \textup{(}but contained in the closure of $\bba_0$\textup{)}.}
We consider the canonical projection $\pi\co \Fl_{\bba_0} \r \Fl_{\bbf}$ where both affine flag varieties are equipped with the Iwahori stratification.
By \refpr{change.facet} and the first step, we may apply \refle{Tate.proper.descent} to $\pi$ and conclude.\smallskip\\
{\em Third step: $\bbf'$, $\bbf$ are arbitrary.}
We have to show that
\begin{equation}\label{Tate.big.orbit}
M := (\iota_w)^! (\iota_v)_! 1\,\in\, \DTM(\Fl_\bbf^w),
\end{equation}
for each  $v, w \in W_{\bbf'} \backslash W / W_\bbf$, where $\iota_w=\iota|_{\Fl_\bbf^w}$ (resp. $\iota_v=\iota|_{\Fl_\bbf^v}$). Note that $M$ is $\calP_{\bbf'}$-equivariant because $\iota_w$ and $\iota_v$ are $\calP_{\bbf'}$-equivariant. By Step 2) we know that $!$-restricting further to the Iwahori orbits gives Tate motives, and we use the equivariance to prove \eqref{Tate.big.orbit} as follows:
By Lemma \ref{lemm--orbit.flag} i) and ii), the map $\calP':=\calP_{\bbf'}\to \Fl_\bbf^w$, $p'\mapsto p'\cdot \dot{w}\cdot e$ induces an isomorphism $\Fl_\bbf^w=\calP'/\calP'_{w}=\calP'_i/\bar{\calP}'_{w,i}$ for $i>\!\!>0$ where both $\calP'_i$ and $\bar{\calP}'_{w,i}$ are fibrewise connected and cellular.
Let $e_w\co S \r \Fl_\bbf^w$ be the inclusion of the base point.
By \refpr{DTM.G/H}, $M$ is Tate iff $(e_w)^! M$ is a Tate motive on $S$. But this holds true since $M$ is a Tate motive with respect to the Iwahori stratification.
\xpf

\subsection{Tate motives on partial affine flag varieties}\label{sect--Tate.Fl}
Given the cellular Whitney-Tate stratification of $\Fl_\bbf$ by $\calP_{\bbf'}$-orbits we can apply \refdele{Whitney.Tate.condition} to get a well-defined subcategory of stratified Tate motives
\begin{equation}\label{DTM.flag}
\DTM(\Fl_\bbf)\,\subset\, \DM(\Fl_\bbf).
\end{equation}
It is the subcategory generated (by arbitrary shifts and colimits) by the objects $(\iota_w)_! 1_{\Fl^w_\bbf}(n)$ for $n \in \Z$, $w \in W_{\bbf'} \setminus W / W_\bbf$.
This category admits the following characterization, similarly to \cite[Lem.~3.2.1]{Soergel:ICandRT}:

\prop\label{prop--DTM.Fl.characterization}
We equip all categories with the Iwahori stratification.\smallskip\\
i\textup{)} The category $\DTM(\Fl_{\bba_0})$ is the smallest cocomplete full subcategory of $\DM(\Fl_{\bba_0})$ which contains the twists of the unit motives supported at the base points $\{\tau\}$ for each $\tau\in \on{Stab}_{\bba_0}$ \textup{(}cf.~\eqref{Quasi_Coxeter}\textup{)}, and which is stable under the operation $\pi_{s}^*\pi_{s,*}$ \textup{(}equivalently $\pi_{s}^!\pi_{s,!}$\textup{)} along the smooth proper projection maps $\pi_s\co \Fl_{\bba_0} \r \Fl_{\bbf_s}$ for all $s\in\bbS$ \textup{(}in the notation of \refex{simple.reflection}\textup{)}.\smallskip\\
ii\textup{)} Consider $\pi\co \Fl_{\bba_0}\to\Fl_\bbf$. The functor $\pi_!=\pi_*\co \DTM(\Fl_{\bba_0})\to \DTM(\Fl_{\bbf})$ is well-defined and the images of the generators as in i\textup{)} generate the target category.
\xprop

\pf
For i), it is immediate from \refeq{iota.v.w} that the generators $(\iota_w)_! 1$ (for $w \in W$) are obtained inductively by writing $w$ as a product of simple reflections as in \eqref{decomp_element}.
Part ii) is immediate from \refpr{change.facet} ii) and \refex{basic.WT}.
\xpf

\theo
\label{theo--generators.DTM.flag}
i\textup{)} The category $\DTM(\Fl_\bbf)$ admits a non-degenerate ``motivic'' $t$-structure. Its heart is the abelian category of mixed stratified Tate-motives
\[
\MTM(\Fl_\bbf)\,\subset\, \DTM(\Fl_\bbf).
\]
If in addition $S$ is irreducible, the simple objects in $\MTM(\Fl_\bbf)$ are precisely the intersection motives $\IC_w(n)$ for $n\in \bbZ$, $w\in W_{\bbf'}\backslash W/W_\bbf$ \textup{(}see \eqref{intersection.complex}\textup{)}.\smallskip\\
ii\textup{)} The restriction of the $\ell$-adic realization functor
\[
\rho_\ell\co \DM(\Fl_\bbf)\to \D_{\textup{\'et}}(\Fl_\bbf,\bbQ_\ell),
\]
\textup{(}cf.~Synopsis \ref{syno--motives} xvii\textup{)}, \refth{motives.Ind-schemes}\textup{)} to the subcategory $\DTM(\Fl_\bbf)$ is conservative.
Moreover, for $M \in \DTM(\Fl_\bbf)$ the following are equivalent: a\textup{)} $M$ lies in $\MTM(\Fl_\bbf)$, and b\textup{)} $\rho_\ell(M)$ is a perverse sheaf.
Finally, $\rho_\ell(\IC_w(n))$ is the $\ell$-adic intersection complex normalized relative to $S$ for all $w$, $n\in \bbZ$.
\xtheo

\pf
For i), we combine Corollary \ref{coro--t-structure} and Theorem \ref{theo--simple.objects}. Part ii) is also immediate from i) and \refle{Tate.conservative}.
\xpf

\subsection{Tate motives on double quotients of the loop group}
\label{sect--DTM.double}

We now turn to (Tate) motives on the prestack
\begin{equation}\label{double_quotient}
\calP_{\bbf'}\backslash LG/\calP_\bbf.
\end{equation}
Here we view \eqref{double_quotient} as a prestack in the sense of \refsect{DM.prestacks} where we may choose $\kappa=\omega$ to be the countable cardinal.
By \refpr{sheafification.iso}, its \'etale stackification
$(\calP_{\bbf'}\backslash LG/\calP_\bbf)^{\et }$ is given by the prestack which sends $T\in \AffSch_S$ to the groupoid $(\calP_{\bbf'}\backslash LG/\calP_\bbf)^{\et }(T)$ of diagrams of ind-schemes $T \stackrel a \gets P \stackrel b \r LG$ where $a$ is an \'etale-locally trivial torsor under the pro-algebraic group $(\calP_{\bbf'}\x\calP_\bbf)\x_ST$ (in particular an affine scheme), and $b$ is equivariant for the action of this group.
(Here ``groupoid'' is understood in the sense of an ordinary category whose morphisms are invertible. We regard it as an $\infty$-groupoid in the natural way.)
The following lemma shows that the category of motives does not change when viewing \eqref{double_quotient} either as a prestack or an honest stack.

\lemm
\label{lemm--DM.double.tau}
i\textup{)} The stack $(\calP_{\bbf'}\backslash LG/\calP_\bbf)^{\et }$ is a sheaf of groupoids for the fpqc topology.\smallskip\\
ii\textup{)} Étale sheafification of prestacks \textup{(}or alternatively, in the above description of the sheafification, the map induced from the trivial $\calP_{\bbf'} \x \calP_\bbf$-torsor on $LG$\textup{)}, yields an equivalence of categories of motives
\[
 \DM((\calP_{\bbf'}\backslash LG/\calP_\bbf)^{\et }) \,\overset{\simeq}{\longrightarrow}\, \DM(\calP_{\bbf'}\backslash LG/\calP_\bbf).
\]
In particular, $\DM(\calP_{\bbf'}\backslash LG/\calP_\bbf)$ is also equivalent to either of the categories $\DM(\calP_{\bbf'}\backslash \Fl_\bbf)$, $\DM(\Fl^{\opp}_{\bbf'}/\calP_\bbf)$.
\xlemm
\begin{proof} For i), let $T'\to T$ be a faithfully flat map in $\AffSch_S$. Let $T' \gets P' \r LG$ be an object in $(\calP_{\bbf'}\backslash LG/\calP_\bbf)^{\et }(T')$ together with a descent datum along $T'\r T$. By effectivity of descent for affine schemes \StP{0244}, the torsor $T' \gets P'$ descends to a fpqc-locally trivial torsor $T\gets P$ represented by affine schemes. The map $P'\r LG$ descends as well because every ind-scheme is an fpqc-sheaf, cf.~\refsect{ind.schemes}. By \refpr{vector.extension} (cf.~also \refle{affine.proj} and Example \ref{exam--groups} iii.a)) every fpqc-locally trivial torsor under $\calP_{\bbf'}\x\calP_{\bbf}$ is \'etale-locally trivial. Thus, $T \gets P \r LG$ is an object of $(\calP_{\bbf'}\backslash LG/\calP_\bbf)^{\et }(T)$. Part ii) follows from \refpr{sheafification.iso} applied with $\tau = \et$.
\end{proof}

All the above also applies, by symmetry, to $\Fl_{\bbf'}^{\opp}:=(\calP_{\bbf'}\backslash LG)^\et$ equipped with its stratification by orbits for the right $\calP_\bbf$-action.
Combining the Whitney-Tate stratification with the group action, \refde{DTM.G} yields two full subcategories
$$\DTM_{\calP_{\bbf'}}(\Fl_\bbf) \;\text { and }\; \DTM_{\calP_\bbf}(\Fl_{\bbf'}^\opp) \subset \DM (\calP_{\bbf'} \backslash LG / \calP_\bbf).\eqlabel{DTM.asymmetric}$$
We now address this seeming asymmetry and also upgrade \refth{generators.DTM.flag} to  equivariant motives.
Let $LG^w := \calP_{\bbf'} w \calP_\bbf$ as a closed subscheme of $LG$.
Formally, $LG^w$ is the scheme-theoretic image of the map $\calP_{\bbf'}\x\calP_\bbf\to LG$, $(p',p)\mapsto p'\dot{w}p$ where $\dot{w}\in LG(S)$ is any representative.
We write $\iota_w$ for all maps of prestacks stemming from the inclusion of $LG^w \subset LG$, in particular $\iota_w : \calP_{\bbf'} \backslash LG^w / \calP_{\bbf} \r \calP_{\bbf'} \backslash LG / \calP_{\bbf}$ and $\iota_w : \calP_{\bbf'} \backslash \Fl^w_\bbf \r \calP_{\bbf'} \backslash \Fl_\bbf$.

\theo
\label{theo--equivariant.DTM.flag} Let $S$ be irreducible.\smallskip\\
i\textup{)} The functor $\iota_w^! : \DM(\calP_{\bbf'} \backslash LG / \calP_{\bbf}) \r \DM(\calP_{\bbf'} \backslash LG^w / \calP_{\bbf})$ has a left adjoint denoted $(\iota_w)_!$.\smallskip\\
ii\textup{)} The two full categories in \textup{\refeq{DTM.asymmetric}} both agree with the full subcategory of $\DM(\calP_{\bbf'}\backslash LG/\calP_\bbf)$ generated \textup{(}under shifts and colimits\textup{)} by the objects $(\iota_w)_!1(n)$ for $n\in \bbZ$, $w\in W_{\bbf'}\backslash W/W_\bbf$.
This category is denoted $\DTM(\calP_{\bbf'}\backslash LG/\calP_\bbf)$.\smallskip\\
iii\textup{)} The motivic $t$-structures on the categories $\DTM_{\calP_{\bbf'}}(\Fl_\bbf)$ and $\DTM_{\calP_{\bbf}}(\Fl_ {\bbf'}^\opp)$ yield two $t$-structures on $\DTM(\calP_{\bbf'} \backslash LG / \calP_\bbf)$.
For $\bbf' = \bbf$, these two t-structures agree in which case its heart is denoted
\[
\MTM(\calP_{\bbf}\backslash LG/\calP_\bbf)\,\subset\, \DTM(\calP_{\bbf}\backslash LG/\calP_\bbf).
\]
It is generated by the intersection motives $\IC_w(n)$ for $n\in\bbZ$, $w\in W_{\bbf}\backslash W/W_\bbf$ from \refth{generators.DTM.flag}.
Moreover, the forgetful functor $\MTM(\calP_{\bbf}\backslash LG/\calP_\bbf)\to \MTM(\Fl_\bbf)$ is fully faithful, and induces a bijection on the isomorphism classes of simple objects.
\xtheo

\pf
For i), we may replace $LG/\calP_\bbf$ and $LG^w / \calP_\bbf$ by their étale sheafification (cf.~\refth{descent.prestacks}), which are $\Fl_\bbf$ and its stratum $\Fl^w_\bbf$, respectively.
These are ind-schemes, so we are done by \refth{motives.Ind-schemes} and \refle{functoriality.equivariant}.

For ii), the objects $(\iota_w)_! 1(n)$ are independent of the role of $\bbf'$ vs.~$\bbf$. So it is enough to show that $\DTM_{\calP_{\bbf'}}(\Fl_\bbf)$ is generated by these objects as a subcategory of $\DM(\calP_{\bbf'} \backslash LG / \calP_\bbf)$.
By \refpr{DTM.G}, which is applicable to $\calP_{\bbf'}$ acting on $\Fl_\bbf$ by \refle{affine.proj} and \refle{orbit.flag}, we can reduce this claim to the case of the action of some algebraic quotient $\calP_{\bbf', i}$ of $\calP_{\bbf'}$ on some subscheme (in $\Sch_S^\ft$) $\Fl_{\bbf, i} \subset \Fl_\bbf$.
The stratification of $\Fl_{\bbf, i}$ by $\calP_{\bbf', i}$-orbits is cellular by \refco{Fl.stratification} and the stabilizers are connected by \refle{orbit.flag}i), so we are done by \refco{DTM.G.X.generators}.

For iii), the $t$-structure on $\DTM_{\calP_{\bbf'}}(\Fl^w_\bbf)$ is characterized by the property that its $\le 0$-part is generated by means of arbitrary colimits by the objects $1_{\Fl_\bbf^w}(n)[\dim \Fl_\bbf^w]$.
Indeed, by \refpr{DM.G.homotopy.invariant} and \refex{affine.proj}.ii), we may replace $\calP_{\bbf'}$ by some algebraic quotient acting on $\Fl_\bbf^w$ and then apply \refpr{generators.DTM.G}.
By construction of the glued t-structure in \refco{t-structure}, $\DTM_{\calP_{\bbf'}}(\Fl_\bbf)^{\le 0}$ is generated by means of arbitrary colimits by the objects $(\iota_w)_! 1_{\Fl_\bbf^w}(n)[\dim \Fl_\bbf^w]$.
Thus, for $\bbf' = \bbf$, the two t-structures have the same $\le 0$-part and therefore agree.
\xpf

\rema
For $\bbf' \ne \bbf$ it may happen that $\dim \Fl_\bbf^w \ne \dim \Fl^{\opp, w}_{\bbf'}$, so the $t$-structures are different. As an example consider $G=\GL_2$, $\bbf'=\bba_0$, $\bbf= 0$, and $w:=s_1$ the simple finite reflection. Then $\Fl_\bbf$ is the affine Grassmannian, and $\Fl_\bbf^w=\{e\}$ is the base point. On the other hand $\Fl^{\opp}_{\bbf'}$ is the full affine flag variety, and $\Fl^{\opp,w}_{\bbf'}=\mathbf{P}^1_S$.
\xrema

We end this section by pointing out the following corollary which is needed in \refsect{intersection}. Specialize to $\bbf'=\bbf=\{0\}$ being the base point, so that $W_{0}\backslash W/W_{0}=X_*(T)_+$. The action of $L^+\bbG_{m,S}$ on $LG$ by changing the variable $\varpi$ preserves the subgroup $L^+G$, and thus gives an action on the double quotient $L^+G\backslash LG/L^+G$, and the affine Grassmannian $\Gr:=(LG/L^+G)^\et$.

\coro\label{coro--equivariant.IC}
For each $\mu\in X_*(T)_+$, $n\in\bbZ$, the object $\IC_\mu(n)$ is $L^+\bbG_m$-equivariant, and defines an object
\[
\IC_\mu(n)\;\in\;\DM\left(L^+\bbG_{m,S}\backslash (L^+G\backslash LG/L^+G)\right),
\]
supported on the Schubert variety $\Gr^{\leq \mu}$.
\xcoro
\pf The statement about the support follows from \refth{equivariant.DTM.flag} iii), and it is enough to prove that each $\IC_\mu(n)$ is $L^+\bbG_{m,S}$-equivariant.
But this is immediate from the $L^+\bbG_{m,S}$-invariance of the $L^+G$-orbits $\Gr^\mu\subset \Gr$:
by \refpr{equivariant.MTM}, we have $\IC_\mu(n)\in\MTM_{L^+G\rtimes L^+\bbG_{m,S}}(\Gr)$. The latter category is a full subcategory of $\DM(L^+\bbG_{m,S}\backslash (L^+G\backslash LG/L^+G))$ using \refpr{sheafification.iso} for the \'etale sheafifications.
\xpf

\section{Intersection motives on moduli stacks of shtukas}
\label{sect--intersection}
In this final section, we show that the intersection (cohomology) motive of the moduli stack of $G$-shtukas with bounded modification is defined independently of the standard conjectures on $t$-structures on triangulated categories of motives, cf.~\refco{prelim.intersection.motives} below.
Our presentation is expository in parts, and follows \cite[\S 2]{Lafforgue:Chtoucas}.
We put a stronger emphasis on the stack of relative positions, and the invariant which is the global function field analogue of the Grothendieck-Messing period map, cf.~\cite{ScholzeWeinstein:Moduli}.

Let $X$ be a geometrically connected smooth projective curve over the finite field $k=\Fq$, and let $G$ be a split reductive $k$-group scheme.

For an effective divisor $N\subset X$, we let $\Bun_{N,G}$ denote the {\em moduli stack of $G$-torsors on $X$ with level-$N$-structure} viewed as an \'etale sheaf of groupoids $(\AffSch_k)^\opp\to \Gpd$. Then $\Bun_G:=\Bun_{\varnothing, G}$ is representable by a quasi-separated Artin stack locally of finite type over $k$ (cf.~e.g.~\cite[Prop.~1]{Heinloth:Uniformization}), and the forgetful map $\Bun_{N,G}\to \Bun_G$ is representable by a torsor under the restriction of scalars $\on{Res}_{N/k}(G\x N)$ (a schematic smooth affine surjective map).

\subsection{The stack of relative positions}
We need the ``fusion version'' of the loop group $L_IG\to X^I$ introduced in \cite{BeilinsonDrinfeld:Quantization}. The relation to the loop group $LG\to S$ from \refsect{loop.grps} is explained in \refex{fusion.loop}.

For a test scheme $T\in \AffSch_k$, and a relative effective Cartier divisor $D\subset X_T$ which is finite and locally free over $T$, we denote by $\hat{D}_T$ the spectrum of the ring of global functions on the formal affine scheme $(X_T/D_T)^\wedge$ obtained as the completion of $X_T$ along $D_T$. Then $D_T\subset \hat{D}_T$ defines a Cartier divisor, and thus $\hat{D}_T^o:=\hat{D}_T\backslash D_T$ is an affine $k$-scheme as well, cf.~\cite[\S3.1.1]{HainesRicharz:TestFunctionsWeil} for details.
For example, if $D$ is the graph of a point $x\in X(T)$, then $\hat{D}_T= \Spec(R\pot{\varpi_x})$ and $\hat{D}_T^o= \Spec(R\rpot{\varpi_x})$ where $T=\Spec(R)$ and $\varpi_x$ is a local coordinate at $x\in X(T)$.

For any finite index set $I$, we consider the loop group functor $L_IG\co (\AffSch_k)^\opp\to \Sets$  defined by
\begin{equation}\label{glob_loop_group}
L_IG(T)\defined \left\{\left(x,g\right)\;|\; x=\{x_i\}\in X^I(T),\; g\in G\big( \hat{\Gamma}_x^o\big)\right\},
\end{equation}
where $\Gamma_x\subset X_T$ denotes the relative effective Cartier divisor given by the union of the graphs of the points $x_i\in X(T)$, $i\in I$.
Likewise, the positive loop group functor $L_I^+G\co (\AffSch_k)^\opp\to \Sets$ is defined as in \eqref{glob_loop_group} by replacing $ \hat{\Gamma}_x^o$ with $ \hat{\Gamma}_x$. The projection $L_IG\to X^I$ (resp.~$L_I^+G\to X^I$) makes $L_IG$ (resp.~$L_I^+G$) into an ind-affine $X^I$-group ind-scheme (resp.~pro-algebraic $X^I$-group scheme), cf.~\cite[Lem.~3.2]{HainesRicharz:TestFunctionsWeil}.
Note that $L^+_IG$ is a special case of the general set-up introduced in \refpr{vector.extension} below by viewing $X^I$ as base scheme, and considering the relative curve $X^I\x X\to X^I$ together with the universal degree $\#I$ divisor in $X^I\x X$, cf.~also \refex{groups} ii). Clearly, $L^+_IG\subset L_IG$ defines a subgroup functor over $X^I$.

\defi\label{defi--rel.pos}
For any finite index set $I$, the \'etale sheaf of groupoids $\AffSch_k^\opp\to \Gpd$ given by
\[
\Rel_I\defined(L^+_IG\backslash L_IG/L_IG^+)^\et
\]
is called the {\em stack of relative positions}.
\xdefi

This is an affine analogue of the relative position defined in Deligne-Lusztig theory, cf.~\cite[\S1.2]{DeligneLusztig:RepsGrps}.
The importance of this stack lies in its relation to the Hecke stack (resp.~moduli stack of shtukas) via the relative position \eqref{invariant.map} (resp.~\eqref{shtuka.inv}).

Note that $\Rel_I$ is an fpqc sheaf of groupoids: this follows as in \refle{DM.double.tau} i) using that every $L^+_IG$-torsor is \'etale locally trivial by \refpr{vector.extension}.

\exam\label{exam--fusion.loop}
Let $x\co S\to X$ be a map where $S\in \AffSch_k$ is the spectrum of a local ring. Then, for $I=\{*\}$ a singleton, the fiber of $L_{I}G\to X$ (resp.~$L_{I}G\to X$) over $x$ is the loop group $LG_x$ (resp.~$L^+G_x$) considered in \refsect{loop.grps} formed by using as base scheme $S$, the group scheme $G_x:=G\x S$ and the local coordinate $\varpi=\varpi_x$ defined by $x$. Thus,
\begin{equation}\label{fiber_over_Grass}
\Rel_{I}\x_{X,x}S\;=\;(L^+G_x\backslash LG_x/L^+G_x)^\et.
\end{equation}
If $S$ is the spectrum of a field, the underlying topological space of $(L^+G_x\backslash LG_x/L^+G_x)^\et$ is the topological space associated with the partial ordered set $(X_*(T)_+,\leq)$, cf.~\refle{orbit.flag} and \refex{simple.reflection}.
For $\la, \mu\in  X_*(T)_+$, this means that $\mu$ specializes to $\la$ if and only if $\la \leq \mu$.
For general $I$, the underlying topological space of $\Rel_I$ is a fusion version of the topological space $(X_*(T)_+,\leq)$ with fusion structure induced by the monoid structure of $X_*(T)_+$, cf.~\cite[\S5.3.10]{BeilinsonDrinfeld:Quantization}.
\xexam

The following lemma is a slight reformulation of \cite[Rmk.~5.1]{MirkovicVilonen:Satake}.

\lemm\label{lemm--var.action}
For $I=\{*\}$ a singleton, there is canonical map of \'etale sheaves of groupoids
\begin{equation}\label{unif.unab}
\Rel_I \;\to\; \left(L^+\bbG_m\backslash (L^+G\backslash LG/L^+G)\right)^\et,
\end{equation}
where $L^+\bbG_m$ acts \textup{(}as in \refco{equivariant.IC}\textup{)} by changing the formal variable $\varpi$ used to form $LG$, $L^+G$.
\xlemm
\qed

\subsection{The invariant}\label{sect--invariant}

For any finite index set $I$, the {\em Hecke stack $\Hecke_I$} is the \'etale sheaf of groupoids $(\AffSch_k)^\opp\to \Gpd$ given by $T\mapsto (\calE, \calE',\{x_i\}_{i\in I},\al)$ where $\calE,\calE'\in \Bun_G(T)$ are torsors, $\{x_i\}_{i\in I}\in X^I(T)$ are points, and $\al\co \calE|_{X_T\backslash \cup x_i}\to \calE'|_{X_T\backslash \cup x_i}$ is a map (i.e., an isomorphism) of torsors. A convenient notation (cf.~\cite{Heinloth:SurveyLafforgue}) for the Hecke stack is
\[
\Hecke_I\;=\; \lan \calE\underset{I}{\overset{\al}{\dashrightarrow}} {\calE'}\ran,
\]
where $\al\co\calE\dashrightarrow \calE'$ is a birational map defined outside $\cup x_i$, i.e., the torsor $\calE'$ differs from $\calE$ by an ``algebraic modification'' at a neighborhood of $\cup x_i$. The points $\{x_i\}_{i\in I}$ are called the {\em paws} (or {\em legs}) of the modification $\calE\dashrightarrow \calE'$. Further, $\Hecke_I$ is representable by a quasi-separated ind-Artin stack, ind-(locally of finite type) over $k$ (cf.~\cite[Lem.~3.1]{Varshavsky:Moduli}), and equipped with a forgetful map $\Hecke_I\to X^I$.

Following the notation in \cite{KottwitzRapoport:Crystals} (cf.~also \cite[\S 1.2.1]{Zhu:Affine}), the {\em relative position} (or {\em invariant})
\begin{equation}\label{invariant.map}
\inv\co \Hecke_I\to \Rel_I, \;\;(\calE\underset{I}{\overset{\al}{\dashrightarrow}}\calE')\;\mapsto\; \inv(\al),
\end{equation}
is the map of \'etale sheaves of groupoids over $X^I$ defined in terms of \refpr{sheafification.iso} as follows. For $T\in \AffSch_{X^I}$, and $(\calE\dashrightarrow \calE')\in \Hecke_I(T)$, we consider the \'etale sheaf $P\co (\AffSch_{T})^\opp\to \Sets$ given by
\[
P(T')= \Isom(\calE'|_{ \hat{\Gamma}_x},\calE^0|_{ \hat{\Gamma}_x})\x \Isom(\calE|_{ \hat{\Gamma}_x}, \calE^0|_{ \hat{\Gamma}_x}),
\]
where $x=\{x_i\}_{i\in I}\in X^I(T)$ are the legs of the modification, and $\calE^0$ denotes the trivial torsor. The map $a\co P\to T$ has the structure of a left $(L_I^+G\x L_I^+G)\x_{X^I}T$-torsor via the rule $(g_1,g_2)*(\be_1,\be_2)=(g_1\be_1,g_2\be_2)$. It is \'etale locally trivial by the approximation argument given in \cite[Lem.~3.4 ii)]{HainesRicharz:TestFunctionsWeil}, and thus an \'etale torsor. We now define a map $b\co P\to L_IG$ by sending $(\be_1,\be_2)\in P(T')$
to the element $\be_1\al\be_2^{-1}\in\Aut(\calE_0)( \hat{\Gamma}_x^o)=G( \hat{\Gamma}_x^o)$. The map $b$ is equivariant for the left $L^+_IG\x L^+_IG$-action on $L_IG$ given by $(g_1,g_2)*g:=g_1gg_2^{-1}$. This defines the relative position
\[
\inv(\al):= ( P \stackrel {b\x a} \lr L_IG\x T)\in (L^+_IG\backslash L_IG/L_IG^+)^\et(T)=\Rel_I(T).
\]

\defi(\cite[Def.~1.2]{Lafforgue:Chtoucas})
For any effective divisor $N\subset X$, and any partition $I=I_1\sqcup \ldots\sqcup I_r$, $r\in \bbZ_{\geq 0}$, the iterated Hecke stack $\Hecke_{N,I}^{(I_1,\ldots,I_r)}$ with level-$N$-structure is the \'etale sheaf of groupoids $\AffSch_k\to \Gpd$ given by
\[
\Hecke_{N,I}^{(I_1,\ldots,I_r)}\defined \lan(\calE_r,\be_r)\underset{I_r}{\overset{\al_r}{\dashrightarrow}}(\calE_{r-1},\be_{r-1})\underset{I_{r-1}}{\overset{\al_{r-1}}{\dashrightarrow}}\ldots\underset{I_{2}}{\overset{\al_2}{\dashrightarrow}} (\calE_1,\be_1)\underset{I_{1}}{\overset{\al_1}{\dashrightarrow}} (\calE_0,\be_0)\ran,
\]
i.e., $\Hecke_{N,I}^{(I_1,\ldots,I_r)}(T)$ classifies data $((\calE_j,\be_j)_{j=1,\ldots, r}, \{x_i\}_{i\in I}, (\al_j)_{j=1,\ldots, r})$
where $(\calE_j,\be_j)\in \Bun_{N,G}(T)$ are torsors with level-$N$-structure, $\{x_i\}_{i\in I}\in (X\backslash N)^I(T)$ are points, and
\[
\al_j\co (\calE_j,\be_j)|_{X_T\backslash (\cup_{i\in I_j}x_i)}\to (\calE_{j-1},\be_{j-1})|_{X_T\backslash (\cup_{i\in I_j}x_i)}
\]
are maps of torsors with level-$N$-structure.
\xdefi

As above $\Hecke_{N,I}^{(I_1,\ldots,I_r)}$ is representable by a quasi-separated ind-Artin stack ind-locally of finite type over $k$, and equipped with the forgetful map $\Hecke_{N,I}^{(I_1,\ldots,I_r)}\to (X\backslash N)^I\subset X^I$. We need the following construction (cf.~also \cite[(1.5)]{Lafforgue:Chtoucas}): Fix a total order $I=\{1,\ldots,n\}$, $n=\#I$ compatible with the partition $I=I_1\sqcup \ldots\sqcup I_r$. This defines a refinement $I_1=\{1\}\sqcup\ldots\sqcup\{l_1\}$, $I_2=\{l_{1}+1\}\sqcup\ldots\sqcup\{l_2\}$,\ldots and also the new partition $I=\{1\}\sqcup\ldots\sqcup\{n\}$. There are maps of \'etale sheaves of groupoids over $X^I$ given by
\begin{equation}\label{funda.diag}
\Hecke_{N,I}^{(I_1,\ldots,I_r)} \;\overset{\pi_{I_\bullet}}{\longleftarrow}\; \Hecke_{N,I}^{(\{1\},\ldots,\{n\})} \;\overset{\inv_{I_\bullet}}{\longrightarrow}\; \bigsqcap_{i=1,\ldots,n}\Rel_{\{i\}}.
\end{equation}
Here $\pi_{I_\bullet}$ is given by forgetting certain $(\calE_j,\be_j)$'s and composing the $\al_j$'s in between as follows
\[
((\calE_n,\be_n)\underset{\{n\}}{\overset{\al_n}{\dashrightarrow}}\dots\underset{\{1\}}{\overset{\al_1}{\dashrightarrow}} (\calE_0,\be_0))\;\mapsto\;  ((\calE_n,\be_n)\underset{\{n,\ldots,l_{r-1}+1\}}{\overset{\al_{\tiny{l_{r-1}+1}}\circ\dots\circ\al_n}{\dashrightarrow}}(\calE_{l_r},\be_{l_r})\dashrightarrow\dots\dashrightarrow(\calE_{l_1},\be_{l_1})\underset{\{l_{1},\ldots, 1\}}{\overset{\al_{1}\circ\dots\circ\al_{l_{1}}}{\dashrightarrow}} (\calE_0,\be_0)).
\]
The relative position is given by $\inv_{I_\bullet}\co ((\calE_\bullet,\be_\bullet),\al_\bullet)\mapsto (\inv(\al_i))_{i=1,\ldots,n}$.

\exam For $N=\varnothing$, and $I=\{1,2\}$ two elements ($r=1$), the maps are given by
\[
(\calE_2\underset{\{2,1\}}{\overset{\al_1\circ\al_2}{\dashrightarrow}} \calE_0)\;\overset{\pi_{I_\bullet}}{\mapsfrom}\; (\calE_2\underset{\{2\}}{\overset{\al_2}{\dashrightarrow}}\calE_1\underset{\{1\}}{\overset{\al_1}{\dashrightarrow}} \calE_0)\;\overset{\inv_{I_\bullet}}{\mapsto}\; (\inv(\al_1),\inv(\al_2)).
\]
\xexam

\subsection{Intersection motives on moduli stacks of shtukas}\label{sect--shtukas.def}
The Hecke stack is used to construct the moduli stack of shtukas as follows, cf.~\cite[D\'ef.~2.1]{Lafforgue:Chtoucas}. For any effective divisor $N\subset X$, and any partition $I=I_1\sqcup \ldots\sqcup I_r$, $r\in \bbZ_{\geq 0}$, the {\em moduli stack of iterated $G$-shtukas $\Sht_{N,I}^{(I_1, \ldots, I_r)}$ with level-$N$-structure} (or simply {\em moduli stack of $G$-shtukas}) is the \'etale sheaf of groupoids $\AffSch_k^\opp\to \Gpd$ given by
\[
\Sht_{N,I}^{(I_1,\ldots,I_r)}\defined \lan (\calE_r,\be_r)\overset{\al_r}{\underset{I_r}{\dashrightarrow}}(\calE_{r-1},\be_{r-1})\overset{\al_r}{\underset{I_{r-1}}{\dashrightarrow}}\ldots\overset{\al_2}{\underset{I_2}{\dashrightarrow}} (\calE_1,\be_1)\overset{\al_1}{\underset{I_1}{\dashrightarrow}} (\calE_0,\be_0)=({^\tau\calE_r},{^\tau\be}_r) \ran,
\]
where ${^\tau\calE}:= (\id_X\x \Frob_{T/k})^*\calE$ denotes the pullback, and $\Frob_{T/k}$ is the relative Frobenius. Formally, $\Sht_{N,I}^{(I_1,\ldots,I_r)}$ is the fibre product of the forgetful map $\Hecke_{N,I}^{(I_1,\ldots,I_r)}\to \Bun_{G,N}\x \Bun_{G,N}$, $((\calE_r,\be_r)\dashrightarrow\ldots\dashrightarrow(\calE_0,\be_0))\mapsto ((\calE_r,\be_r),(\calE_0,\be_0))$ with the Frobenius correspondence $\id\x \Frob\co \Bun_{G,N}\to \Bun_{G,N}\x \Bun_{G,N}$. There is the forgetful map $\Sht_{N,I}^{(I_1,\ldots,I_r)}\to (X\backslash N)^I\subset X^I$.

We fix a Borel pair $T\subset B\subset G$. By \cite[Prop.~2.16]{Varshavsky:Moduli} (cf.~also \cite[Prop.~2.6]{Lafforgue:Chtoucas}), there is a presentation of the reduced locus
\begin{equation}\label{shtuka.pres}
(\Sht_{N,I}^{(I_1,\ldots,I_r)})_\red\;=\; \colim_{\underline{\mu}}\Sht_{N,I,{\underline{\mu}}}^{(I_1,\ldots,I_r)},
\end{equation}
with transition maps closed immersions. Here $\underline{\mu}=(\mu_i)_{i\in I}\in X_*(T)_+$ runs through the admissible tuples (i.e., $\sum_{i\in I}\mu_i=0$ in $\pi_1(G)$), and each $\Sht_{N,I,\underline{\mu}}^{(I_1,\ldots,I_r)}$ is representable by a non-empty Deligne-Mumford stack locally of finite type. Thus, \eqref{shtuka.pres} is an ind-Deligne-Mumford stack ind-locally of finite type over $k$.

Fixing a total order on $I=\{1,\ldots,n\}$ compatible with $I=I_1\sqcup\ldots\sqcup I_r$, the diagram \eqref{funda.diag} restricts to the diagram
\begin{equation}\label{shtuka.inv}
\Sht_{N,I}^{(I_1,\ldots,I_r)} \;\overset{\pi_{I_\bullet}}{\longleftarrow}\; \Sht_{N,I}^{(\{1\},\ldots,\{n\})} \;\overset{\inv_{I_\bullet}}{\longrightarrow}\; \bigsqcap_{i=1,\ldots,n}\Rel_{\{i\}}.
\end{equation}
A special case of \refpr{f_!.ind-Artin} is the following result.

\prop \label{prop--existence.functors}
There exists an adjunction of functors
\[
\pi_{I_\bullet,!} : \;\DM\left(\Sht_{N,I}^{(\{1\},\ldots,\{n\})}\right)\; \leftrightarrows \;\DM\left(\Sht_{N,I}^{(I_1,\ldots,I_r)}\right) \; : \pi_{I_\bullet}^!.
\]
\xprop
\qed

For any $\mu\in X_*(T)_+$, $m\in\bbZ$, we denote by
\[
\IC_{\{*\},\mu}(m)\;\in\; \DM(\Rel_{\{*\}})
\]
the $!$-pullback of $\IC_{\mu}(m)\in \DM(L^+\bbG_{m,k}\backslash (L^+G\backslash LG/L^+G))$ under \eqref{unif.unab}, cf.~\refco{equivariant.IC}.

\defi \label{defi--intersection.motive}
Fix a total order $I=\{1,\ldots,n\}$, and a compatible partition $I=I_1\sqcup \ldots\sqcup I_r$. For each effective divisor $N\subset X$, each admissible tuple $\underline{\mu}=(\mu_i)_{i\in I}\in X_*(T)_+$ and each $\underline{m}=(m_i)_{i\in I}\in\bbZ^I$, one defines
\[
\calF_{\underline{\mu},\underline{m}}=\calF_{N,I,\underline{\mu},\underline{m}}^{(I_1,\ldots,I_r)}\defined \pi_{I_\bullet,!}\left(\inv_{I_\bullet}^! \left(\boxtimes_{i=1}^n\IC_{\{i\},\mu_i}(m_i)\right)\right) \;\in\; \DM\left(\Sht_{N,I}^{(I_1,\ldots,I_r)}\right).
\]
(see \refpr{boxtimes} for the box product, \refre{DM.prestacks} ii) for the pullback and \refpr{existence.functors} for the pushforward).
\xdefi

\coro\label{coro--prelim.intersection.motives} Let $\ell\in\bbZ$ be a prime number invertible on $k$. For each tuple of data as in \refde{intersection.motive}, the motive $\calF_{\underline{\mu},\underline{m}}$ is supported on $\Sht_{N,I,\underline{\mu}}^{(I_1,\ldots,I_r)}$, and its $\ell$-adic realization
\[
\rho_\ell\left(\calF_{\underline{\mu},\underline{m}}\right)\;\in\; \D_\et\left(\Sht_{N,I,\underline{\mu}}^{(I_1,\ldots,I_r)},\bbQ_\ell\right)
\]
is \textup{(}up to twist and the choice of a lattice in the adelic center\textup{)} the intersection complex defined in \textup{\cite[D\'ef.~2.14]{Lafforgue:Chtoucas}}. In particular, the motives $\calF_{\underline{\mu},\underline{m}}$ are normalized such that the $*$-restrictions of $\rho_\ell(\calF_{\underline{\mu},\underline{m}})$ along the fibers of the map $p\co \Sht_{N,I}^{(I_1,\ldots,I_r)}\to (X\backslash N)^I$ are perverse. Further, the $\ell$-adic realization of the motive
\[
 p_!\left(\calF_{\underline{\mu},\underline{m}}\right) \;\in\; \DM((X\backslash N)^I)
\]
is \textup{(}up to the normalizations above, and the bound of the Harder-Narasimhan slopes\textup{)} the intersection cohomology complex defined in \textup{\cite[D\'ef.~4.1]{Lafforgue:Chtoucas}}.
\xcoro

\rema In \cite[D\'ef.~2.14]{Lafforgue:Chtoucas}, the intersection complexes are normalized to be pure of weight zero along the fibers of the structure map $p$. For this reason, a square root of the cardinality of the residue field in a finite extension of $\bbQ_\ell$ is fixed in \textit{loc.~cit.} in order to define half Tate twists. Since this is not possible in the motivic setting, we have to add the Tate twists in \refde{intersection.motive} as an additional datum.
\xrema

\begin{proof}[Proof of \refco{prelim.intersection.motives}] We need to relate the $\ell$-adic realization of $\calF_{\underline{\mu},\underline{m}}$ to the intersection complex of $\Sht_{N,I,\underline{\mu}}^{(I_1,\ldots,I_r)}$, cf.~\cite[Def.~2.14]{Lafforgue:Chtoucas}. There is a Cartesian diagram
\[
\xymatrix{
\Sht_{N,I,\underline{\mu}}^{(I_1,\ldots,I_r)} \ar[r] \ar[d]^{i_\mu} &\bigsqcap_{i} \left(L^+_{\{i\}}G\backslash \Gr_{\{i\}}^{\leq \mu}\right)^\et \ar[d]^{i_\mu} \\
\Sht_{N,I}^{(I_1,\ldots,I_r)} \ar[r]^{\inv_{I_\bullet}} & \bigsqcap_{i}\Rel_{\{i\}},}
\]
where $\Gr_{\{*\}}^{\leq \mu}\subset (L_{\{*\}}G/L^+_{\{*\}}G)^\et$ is the preimage of $(L^+\bbG_m\backslash \Gr_G^{\leq \mu})^\et$ under \eqref{unif.unab}.
The $L^+_{\{*\}}G$-action on $\Gr_{\{*\}}^{\leq \mu}$ factors through a finite-dimensional quotient $L^+_{\{*\}}G\to G_{j}$, $j>\!\!>0$, with split pro-unipotent kernel, cf.~\refpr{vector.extension}. The top horizontal arrow induces the map onto the local model
\[
\epsilon\co \Sht_{N,I,\underline{\mu}}^{(I_1,\ldots,I_r)}\;\to\; \bigsqcap_{i=1,\ldots,n}\left(G_j\backslash \Gr_{\{i\}}^{\leq \mu}\right)^\et
\]
constructed in \cite[Prop.~2.8, 2.9]{Lafforgue:Chtoucas}. Denote $M:=\boxtimes_{i=1}^n\IC_{\{i\},\mu_i}(m_i)$ which we view as a motive on the target of $\epsilon$ by \refpr{DM.G.homotopy.invariant}. Together with \refle{Lurie.co.limit} and base change for closed immersions (\refpr{f_!.ind-Artin}) it follows that there is an equivalence $\inv_{I_\bullet}^! i_{\mu,!}M \simeq  i_{\mu,!}\epsilon^!M$. Thus, \cite[Cor.~2.16, 2.18]{Lafforgue:Chtoucas} shows (the lattice in the adelic center in {\em loc.~cit.} does not affect the isomorphisms) that the $\ell$-adic realization of $(\pi_{I_\bullet}\circ i_{\mu})_!(\epsilon^!M)$ is the intersection complex, cf.~\refpr{existence.functors}. Here we used \refth{f!.Artin} for the $\ell$-adic realization of the $!$-push forward. The rest of the corollary is immediate from this.
\xpf

\rema \label{rema--WT.properties.fusion}
i\textup{)} Similarly to \cite[D\'ef.~4.1]{Lafforgue:Chtoucas} one may also bound the Harder-Narasimhan slope of the bundles forming the shtuka in order to obtain locally constructible intersection cohomology motives. \smallskip\\
ii\textup{)} There is an analogous version of \refco{prelim.intersection.motives} for the fusion Grassmannians, cf.~\cite[Thm.~1.17]{Lafforgue:Chtoucas} for the $\ell$-adic version. We plan to improve on our result in two ways: independence of the intersection motives of the fixed total order on $I$ (and hence also in \refco{prelim.intersection.motives}), and compatibility with the fusion structure coming from the motivic Satake equivalence \cite{RicharzScholbach:Satake}. Both statements rely on Whitney-Tate properties of fusion Grassmannians, and ultimately on the Tateness of the convolution morphism. This is work in progress.\smallskip\\
iii\textup{)}
Given ii\textup{)}, it seems possible to obtain a $(S=T)$-Theorem in this context, cf.~\cite[Prop.~6.2]{Lafforgue:Chtoucas}. To proceed further, a major hurdle seems to be a variant of Drinfeld's lemma for $\DM$.
\xrema

\appendix

\section{Ind-spaces and pro-groups}
In this appendix, we state our conventions about ind-algebraic spaces/ind-schemes (\S\ref{sect--ind.schemes}), pro-algebraic groups (\S\ref{sect--algebraic.grps}) and their action on ind-algebraic spaces (\S\ref{sect--pro.action}). In \S \ref{sect--torsors}, we prove that pro-algebraic groups which are constructed as a ``positive loop group'' (or ``jet group'') satisfy a remarkable property: every torsor under such a pro-algebraic group admits sections \'etale-locally.

\subsection{Strict ind-spaces}\label{sect--ind.schemes}
Let $S$ be any scheme, and let $\AffSch_S$ be the category of affine schemes equipped with a map to $S$. A {\em \textup{(}strict\textup{)} ind-algebraic space $X$ over $S$} is a presheaf $X\co (\AffSch_S)^\opp\to \Sets$ which admits a presentation $X=\text{colim}_iX_i$ where $\{X_i\}_{i\in I}$ is a direct system of algebraic spaces $X_i$ with transition maps $t_{i,j}\co X_i\r X_j$ ($j\geq i$) being closed immersions. Here $I$ is a countable directed (a.k.a.~filtered) index set. Every ind-algebraic space is an fpqc sheaf on $\AffSch_S$ (because every algebraic space defines a sheaf by \StP{03W8}, and filtered colimits of sheaves on $\AffSch_S$ are computed termwise).

By definition, the category $\IndAlgSp_S$ of ind-algebraic spaces over $S$ is a full subcategory of presheaves.
Note that every map every map $T\to X$ from a quasi-compact algebraic space factors over some $X_i$ (by quasi-compactness of $T$ it is covered by finitely many affine schemes).
Further, every map $f\co \colim_{i\in I}X_i\r \colim_{j\in J}Y_j$ can be written as a colimit of maps $f_{(i,j)}\co X_{(i,j)}:=X_i\x_YY_j\r Y_j=:Y_{(i,j)}$, $(i,j)\in I\x J$. In particular, after possibly changing the presentation every map $f\co X\r Y$ is a colimit of maps $f_i\co X_i\r Y_i$ for the same directed index set.
Thus, the category $\IndAlgSp_S$ is closed under fibre products, i.e., $X\x_YZ=\colim_i X_i\x_{Y_i}Z_i$ is an ind-algebraic space for any maps $X\to Y, Z\to Y$ in $\IndAlgSp_S$.

 Let $\mathcal{P}$ be a property of algebraic spaces (or morphism of algebraic spaces). An ind-algebraic space $X$ (or a map $X\to Y$) is said to have $\on{ind-}\calP$ if there exists a presentation $X=\on{colim}_iX_i$ where each $X_i$ has property $\mathcal{P}$. A map $f\colon X\r Y$ of ind-algebraic spaces is said to have property $\mathcal{P}$ (resp. to be schematic and to have $\mathcal{P}$) if for all $T\in \AffSch_S$, the pullback $X\x_YT$ is an algebraic space (resp.~a scheme) and the map $f\x_YT $ has property $\mathcal{P}$.

Likewise, the category $\IndSch_S$ of (strict) ind-schemes over $S$ is the full subcategory of $\IndAlgSp_S$ of those objects $X=\colim_iX_i$ where each $X_i$ is a scheme.

\subsection{Strictly pro-algebraic groups}\label{sect--algebraic.grps}
Let $S$ be any scheme. A {\it \textup{(}strictly\textup{)} pro-algebraic group scheme $G$ over $S$} is a presheaf $G\co (\AffSch_S)^\opp\r \Grps$ which admits a presentation $G= \lim_i G_i$ where $\{G_i\}_{i\in \bbN}$ is an inverse system of smooth $S$-affine (hence finitely presented) $S$-group schemes $G_i$ with smooth surjective transition maps of $S$-groups $\pi_{i,j}\co G_j\r G_i$ for $j\geq i$.

\lemm \label{lemm--pro.group}
Let $G=\lim_{i\in \bbN}G_i$ be a pro-algebraic $S$-group.\smallskip\\
i\textup{)} The presheaf $G$ is representable by a faithfully flat $S$-affine $S$-group scheme.\smallskip\\
ii\textup{)} For each $i\in \bbN$, the map $G\r G_i$ is faithfully flat, and hence there is a short exact sequence of flat $S$-affine $S$-group schemes $1\r U_i\r G\r G_i\r 1.$
\smallskip\\
iii\textup{)}
If all the $G_i$ have connected fibers over $S$, then so does $G$ in which case they are automatically geometrically connected.
\xlemm
\pf
Let $\calA_i$ be the Hopf $\cO_S$-algebras defining $G_i$. Parts i) and ii) follow by noting that the colimit $\calA := \colim_i \calA_i$ has a natural Hopf algebra structure. It is faithfully flat since all the $G_j \r G_i$ are smooth surjective. Part i) and ii) are immediate.
For iii), note that any (pro-)algebraic $S$-group $G$ with connected fibers automatically has geometrically connected fibers by \StP{04KV} (because the unit section always defines a rational point). Hence, we may assume that $S$ is the spectrum of an algebraically closed field, so that the topological space $|G_i|$ is connected for every $i\in \bbN$. Using that the map $|G|\r |G_i|$, $i\in \bbN$ is surjective and open (being quasi-compact, surjective and flat \StP{02JY}), one checks that $|G|$ is connected.
\xpf

\subsection{Pro-algebraic groups acting on ind-algebraic spaces}\label{sect--pro.action}

Let $G$ be a pro-algebraic group and $X$ an ind-algebraic space over $S$. Then a map of presheaves $a\co G\x_SX\r X$ which satisfies the axioms of an action map is called an action of $G$ on $X$ (over $S$).

\defi \label{defi--adm.act}
The action $a\co G\x_SX\r X$ is called {\it admissible} if there exist presentations $G=\lim_{i\in \bbN}G_i$ and $X=\colim_{j\in J}X_j$ with the following properties:\smallskip\\
i) The presentation $X=\colim_{j}X_j$ is $G$-stable, i.e., for each $j\in J$, the restriction $a|_{G\x_S X_j}$ factors as $G\x_S X_j\overset{a_j}{\r} X_j\subset X$.\smallskip\\
ii) For each $j\in J$, the $G$-action on $X_j$ factors through the algebraic $S$-group $G_{i}$ for some $i>\!\!>0$, i.e., the subgroup $U_i=\ker(G\to G_i)$ operates trivially on $X_j$.
\xdefi

\lemm \label{lemm--adm.strata}
Let $a\co G\x_SX\r X$ be an admissible action for the presentations $G=\lim_{i\in \bbN}G_i$, $X=\colim_{j\in J}X_j$. By taking suitable finite unions of the $X_j$, there exists a $G$-stable presentation $X=\colim_{i\in \bbN}X'_i$ such that the $G$-action on $X_i'$ factors through $G_i$ for every $i\in \bbN$.
\xlemm
\pf For each $i\in \bbN$, let $J_i=\{j\in J\,|\,\text{the $G$-action on $X_j$ factors exactly through $G_i$}\}$. Then the sets $J_i$ are countable, pairwise disjoint, and one has $\bigsqcup_{i\in \bbN}J_i=J$ by Definition \ref{defi--adm.act} ii). Cantor's diagonal argument produces a family of finite subsets $\{J_i'\}_{i\in \bbN}$ of $J$ with the following properties: for each $j\in J_i'$ the $G$-action on $X_j$ factors through $G_i$, one has $J_i'\subset J'_{i'}$ for $i\leq i'$, and $\cup_{i\in \bbN}J_i'=J$. For each $i\in \bbN$, we define the closed subspace $X_i':=\cup_{j\in J_i'}X_j\subset X$, i.e., the scheme-theoretic image (cf.~\StP{082W}) of the quasi-compact map $\bigsqcup_{j\in J_i'}X_j\r X$. Note that we have a presentation $X=\colim_iX_i'$.
To prove that $X_i'$ is $G$-stable, we note that the diagram of ind-algebraic spaces
$$\xymatrix{
G\x_S \left(\bigsqcup_{j\in J_i'}X_j\right) \ar[r] \ar[d]^{\bigsqcup_ja_j} & G\x_SX \ar[d]^a \\
\left(\bigsqcup_{j\in J_i'}X_j\right) \ar[r] & X
}$$
is Cartesian. Since $G$ is $S$-flat and taking the scheme-theoretic image along quasi-compact maps commutes with flat base change (follows from \StP{082Z}), the scheme-theoretic image of the top arrow is $G\x_SX_i'$. This implies that $X_i'$ is $G$-stable.
By construction, the $G$-action on $X_i'$ factors through $G_i$.
\xpf

\lemm\label{lemm--adm.reduced}
Let $a\co G\x_SX\r X$ be an admissible action for the presentations $G=\lim_{i\in \bbN}G_i$, $X=\colim_{j\in J}X_j$. Then the action restricts to an action $a_\red\co G\x_S X_\red\r X_\red$ on the underlying reduced sub-ind-algebraic space
which is admissible for the presentations $G=\lim_{i}G_i$ and $X_\red=\colim_jX_{j,\red}$.
\xlemm
\pf Once we know that $X_{j,\red}\subset X_j$, $j\in J$ is $G$-invariant, the admissibility of the induced action is immediate. We reduce to the case where $X=X_j$ is an algebraic space. We need to show that the reduced subspace $X_\red\subset X$ is $G$-invariant. Our claim follows, by applying the functor $(\str)_\red$, from the following equality of algebraic spaces
\begin{equation}\label{reduced.eq}
G\x_SX_\red\,=\,\left(G\x_SX\right)_\red.
\end{equation}
If $G$ is a smooth $S$-group, then \eqref{reduced.eq} holds true because being reduced is local in the smooth topology, cf.~\StP{034E}. The general case follows from $\calA=\colim_i\calA_i$, in the notation of the proof of Lemma \ref{lemm--pro.group}, and the compatibility of tensor products and colimits using that $\calA_i\to\calA_j$ is universally injective (because faithfully flat).

\xpf

\lemm \label{lemm--adm.finite.type}
Let $X$ be of ind-finite type over a Noetherian scheme $S$, and let $G$ be a pro-algebraic $S$-group. Then every action $a\co G\x_SX\r X$ is admissible.
\xlemm
\pf Let $X=\colim_{j\in J}X_j$ be a presentation by finite type $S$-algebraic spaces, and let $a\co G\x_SX\r X$ be an action. As $G$ is an $S$-affine scheme, the algebraic space $G\x_SX_j$, $j\in J$ is quasi-compact. Hence, the map $a|_{G\x_SX_j}$ factors through $X_{j'}$ for some $j'>\!\!>0$, and we define $X_j'$ as the scheme-theoretic image of $a|_{G\x_SX_j}$. Since $S$ is Noetherian, the closed subspace $X_j'\subset X_{j'}$ is of finite type over $S$, and clearly $G$-stable by construction.
Also $X=\colim_jX_j'$ because $X_j\subset X_j'$.
Now one verifies that every $G$-action on any finite type $S$-algebraic space $X$ factors through $G_i$ for some $i>\!\!>0$.
The lemma follows.
\xpf

\subsection{Torsors under pro-algebraic groups}\label{sect--torsors}
Let $G$ be a pro-algebraic $S$-group. By Lemma \ref{lemm--pro.group}, the map $G\to S$ is faithfully flat and affine (hence quasi-compact). Let $P\to S$ be a right $G$-torsor in the fpqc topology on $S$\nts{as defined in \cite[Tag 03AH]{StacksProject}}. By fpqc descent for affine morphisms \StP{0245}, the map $P\to S$ is also faithfully flat and affine (hence $P$ is a scheme). We denote the set\footnote{Lemma \ref{lemm--torsor.sequence} below implies that $\H^1_\fpqc(S,G)$ is indeed a set (use the twisting trick).} of isomorphism classes of right $G$-torsors in the fpqc (resp.~\'etale) topology on $S$ by $\H^1_\fpqc(S,G)$ (resp.~$\H^1_\et(S,G)$).

Our aim is to show that $\H^1_{\on{fpqc}}(S,G)=\H^1_{\text{\'et}}(S,G)$ under suitable conditions on $G$ (cf.~Corollary \ref{coro--etale.torsor}), and to show that all examples we have in mind satisfy this condition, cf.~Proposition \ref{prop--vector.extension} and Example \ref{exam--groups}. This generalizes the \'etale-local triviality of the torsors considered in \cite[Thm 1.4]{PappasRapoport:LoopGroups} and \cite[Lem 3.4 ii)]{HainesRicharz:TestFunctionsWeil} for example.

Given a presentation $G=\lim_{i\in \bbN} G_i$, $P_i:=P\x^GG_i$ is a $G_i$-torsor on $S$. For each $i\in \bbN$, the transition map $G_{i+1}\to G_i$ induces an identification $P_{i+1}\x^{G_{i+1}}G_i=P_i$. This gives a natural map
\begin{equation}\label{torsor.lim}
\H^1_{\fpqc}(S,G)\to \lim \H^1_\et(S,G_i), \;\;\; P\mapsto \{P_i\}_{i\in \bbN}.
\end{equation}
Here we use that $\H^1_{\text{\'et}}(S,G_i)\subset \H^1_{\on{fpqc}}(S,G_i)$ is a bijection: as $G_i\to S$ is smooth, any fpqc-$G_i$-torsor is also smooth, and hence admits sections \'etale-locally. As in \cite[Ch IX, \S2]{BousfieldKan} consider the group $\bigsqcap_{i\in\bbN} \H^0(S,G_i)$ acting on the set $\bigsqcap_{i\in\bbN} \H^0(S,G_i)$ via the formula
\[
(g_0,g_1,g_2,\ldots)*(x_0,x_1,x_2,\ldots):= (g_0x_0 g_1^{-1},\; g_1x_1 g_2^{-1}, \; g_2x_2 g_3^{-1},\ldots),
\]
where $G_{i+1}$ acts on $G_i$ via the transition map $G_{i+1}\to G_i$. We denote by $\on{lim}^1\H^0(S,G_i):=\bigsqcap \H^0(S,G_i)/\sim $ its set of equivalence classes. There is a natural map
\begin{equation}\label{torsor.lim1}
\bigsqcap \H^0(S,G_i)\to \H^1_{\on{fpqc}}(S,G),\;\;\; (x_i)_{i\in \bbN}\mapsto \on{lim}_{x_i}G_i,
\end{equation}
where $G_i$ is considered as the trivial $G_i$-torsor via right multiplication with transition maps $G_{i+1}\to G_i$, $a\mapsto x_{i}\cdot a$. Then $\{G_i, x_i\}_{i\in \bbN}$ forms an inverse system, and its limit $\on{lim}_{x_i}G_i\to S$ defines a right $G$-torsor (prove that $\on{lim}_{x_i}G_i\to S$ is faithfully flat and affine similarly to the proof of Lemma \ref{lemm--pro.group}, and further that the right $G$-action is simply transitive).


\lemm \label{lemm--torsor.sequence}
The maps \eqref{torsor.lim} and \eqref{torsor.lim1} induce an exact sequence of pointed sets
\[
1\to \on{lim}^1\H^0(S,G_i)\to \H^1_{\on{fpqc}}(S,G)\to \lim \H^1_{\textup{\'et}}(S,G_i)\to 1.
\]
\xlemm
\pf
The map $\on{lim}^1\H^0(S,G_i)\to \H^1(S,G)$ is well-defined. Indeed, if $x,y\in \bigsqcap \H^0(S,G_i)$ with $y=g*x$ for some $g\in \bigsqcap \H^0(S,G_i)$, then there is an isomorphism of inverse systems $\{G_i,x_i\}_{i\in\bbN}\to \{G_i,y_i\}_{i\in\bbN}$ induced from the maps $G_i\to G_i$, $a\mapsto g_i\cdot a$. The exactness properties are elementary to check, and left to the reader. Note that this can also be regarded as an example of a Milnor type exact sequence \cite[Prop.~VI.2.15]{GoerssJardine:Simplicial}.

\xpf

Let $\calE$ be a $S$-vector bundle, i.e., a locally free $\calO_S$-module of finite rank.
Using that $\calE$ is quasi-coherent and reflexive (i.e., $\calE=(\calE^\vee)^\vee$), one shows that the group-valued functor on the category of $S$-schemes $T$ given by $T\mapsto \calE(T)$ is representable by the $S$-group scheme
$$\bbV(\calE):=\underline{\Spec}_{\calO_S}(\on{Sym}^\otimes(\calE^\vee))\to S,\eqlabel{VE}$$
cf.~\cite[\S1.7]{EGA2}.
Note that our notion is dual to the reference.
We also write $\bbV^\x(\calE) :=\bbV(\calE) \setminus S$ for the complement of the zero section.
The $S$-group $\bbV(\calE)$ is algebraic, unipotent, commutative, and Zariski locally on $S$ isomorphic to $\bbG_{a,S}^r$, $r:=\on{rank}(\calE)$. A \textit{vector group} is an $S$-group isomorphic to $\bbV(\calE)$ for some $S$-vector bundle $\calE$.

\defi\label{defi--unipotent}
A pro-algebraic $S$-group $G$ is called \textit{split pro-unipotent} if it admits a presentation $G= \lim_{i \in \bbN} G_i$ such that the group $G_0$ and all groups $\ker(G_{i+1}\to G_i)$, $i\in \bbN$ are vector groups.
In other words, a split pro-unipotent group is an (possibly infinite) successive extension of a vector group by vector groups.
\xdefi

\prop \label{prop--unipotent}
If $S$ is affine and $G$ is split pro-unipotent, then $\H^1_{\on{fpqc}}(S,G)$ is trivial.
\xprop
\pf
We have $\H^1_\et(S, G_i) = 1$ for all $i$.
Indeed, by induction on $i$ and the standard $6$-term exact sequence for non-abelian group cohomology \cite[Ch III, Prop.~3.3.1]{Giraud:Cohomologie}, this reduces to $\H^1_\et(S,\bbV(\calE))=\H^1_\et(S,\calE)=\H^1_\Zar(S,\calE)$ \StP{03P2} being trivial which holds because $S$ is affine.
This argument also shows that the maps $\H^0(S, G_{i+1}) \r \H^0(S, G_i)$ are surjective, so $\lim^1 \H^0(S, G_i)=1$ by \cite[Ch IX, \S2, Prop.~2.4]{BousfieldKan}.
Thus, the proposition follows from Lemma \ref{lemm--torsor.sequence}.
\xpf

\exam Proposition \ref{prop--unipotent} is false for general pro-algebraic groups: Let $p\in \bbZ$ be a prime, and let $G=\on{Gal}(\bbQ_p(\zeta_{p^\infty})/\bbQ_p)\simeq \bbZ_p$ considered as a pro-algebraic group. Then the $G$-torsor $\Spec(\bbQ_p(\zeta_{p^\infty}))\to \Spec(\bbQ_p)$ has no sections \'etale-locally.
\xexam

\coro\label{coro--etale.torsor}
If there exists an $i_0\in \bbN$ such that $\ker(G\to G_{i_0})$ is split pro-unipotent, then the natural map $\H^1_{\textup{\'et}}(S,G)\to \H^1_{\on{fpqc}}(S,G)$ is bijective.
\xcoro
\pf
The map is clearly injective, and we have to show that every $G$-torsor $P\to S$ for the fpqc topology admits sections \'etale-locally.
For this we may assume $S$ is affine.
Let $U:=\ker(G\to G_{i_0})$, and consider the factorization $P \stackrel a \to P/U=P\x^GG_{i_0} \stackrel b \to S$. The map $b$ 
is a $G_{i_0}$-torsor, and hence admits sections \'etale-locally.
The map $a$ is a trivial $U$-torsor by \refpr{unipotent}, since $S$ and therefore $P/U$ is affine as well.
\xpf

We end this section by proving that pro-algebraic groups which are defined as positive loop groups (or sometimes called jet groups) satisfy the assumption of Corollary \ref{coro--etale.torsor} with $i_0=0$. All examples of these pro-algebraic groups which we encounter in the main body of this manuscript fall under the following general set-up, cf.~Example \ref{exam--groups} below.

Let $X\to S$ be smooth and pure of relative dimension $1$. Let $D\subset X$ be an effective Cartier divisor which is finite and locally free over $S$. Let $\calI_D\subset \calO_X$ be the ideal sheaf defined by $D$. For $i\geq 0$, the subscheme $D_i\subset X$ defined by $\calI_D^{i+1}$ is again finite and locally free over $S$. Let $\hat{D}=\colim_{i\in \bbN}D_i$ considered as an ind-scheme. Let $\hat{\calG}\to \hat{D}$ be a group functor which is relatively representable\footnote{We do not want to require that $\hat{\calG}$ spreads to a group scheme over $X$. The weaker assumption suffices for our purposes.} by a smooth affine group scheme of finite presentation. For each $i\geq 0$, let $\calG_i:=\hat{\calG}\x_{\hat{D}}D_i$. We consider the strictly pro-algebraic $S$-group (cf.~proof of Proposition \ref{prop--vector.extension})
\[
G\defined \lim_{i\geq 0}G_i,
\]
where $G_i:=\on{Res}_{D_i/S}(\calG_i)$ denotes the Weil restriction of scalars, and $G_i\to G_{i-1}$ are the obvious transition maps. Further, we denote $U:=\ker(G\to G_0)$.

\prop \label{prop--vector.extension}
The pro-algebraic group $U$ is split pro-unipotent. More precisely, for each $i\geq 1$, there is a short exact sequence of algebraic $S$-group schemes
\[
0\to \bbV(\calE_i)\to G_i\to G_{i-1}\to 0,
\]
for an explicit $S$-vector bundle $\calE_i$ of rank $\on{rk}(D)\cdot \dim(\hat{\calG}/\hat{D})$, viewed as a locally constant function on $S$. In particular, by \refco{etale.torsor}, we have $\H^1_{\on{fpqc}}(S,G)=\H^1_{\textup{\'et}}(S,G)$.
\xprop
\pf
Since $D_i\to S$ is finite locally free, the Weil restriction of scalars $G_i$ is representable by an algebraic $S$-group scheme, cf.~\cite[\S7.6, Thm.~4, Prop.~5]{BLR}. The canonical map $G_i\to G_{i-1}$ is locally of finite presentation (because limit preserving \StP{01ZC}), formally smooth, and thus is a smooth map, cf.~\StP{02H6}. As $G_i\to G_{i-1}$ is also surjective, it follows that $G_i\to G_{i-1}$ is a surjection of \'etale sheaves. It remains to identify the kernel $\ker(G_i\to G_{i-1})$ as a vector group.

We consider the following general set-up. Let $S'$ be a base scheme, and let $Y, Z$ be $S'$-schemes. Let $Y_0\subset Y$ be a closed subscheme defined by a sheaf of ideals $\calJ$ with $\calJ^2=0$. Let $g_0\co Y_0\to Z$ be a map of $S'$-schemes. If $Z\to S'$ is smooth, then \cite[III.5, Cor.~5.2]{SGA1} implies that for all $T\to S'$ we have a functorial identification
\begin{equation}\label{deformation}
\{g\in \Hom_{T}(Y_T,Z_T)\;|\;g|_{Y_{0,T}}=g_{0,T} \}\,=\, \left(g_0^*\mathfrak g_{Z/S}\otimes_{\calO_{Y_0}}\calJ \right)(Y_{0,T})
\end{equation}
where $\mathfrak g_{Z/S}:=(\Omega_{Z/S}^1)^*$. We apply this as follows.

Let $S'=D_i$, and set $Z:=\calG_i$. Let $Y_0:=D_{i-1}\subset D_i=:Y$, i.e., $\calJ=\calI_{D}^i/\calI_{D}^{i+1}$. Let $g_0\co D_{i-1}\to \calG_i$ be given by the inclusion $D_{i-1}\subset D_i$ composed with the identity section $1\co D_i\to \calG_i$. Also let $\pi_i\co D_i\to S$ be structure map. Taking $\pi_{i,*}$ of the left hand side in \eqref{deformation}, i.e., restricting the functor to the category of $S$-schemes, is by definition equal to the functor $\ker(G_i\to G_{i-1})$. We define
\begin{equation}\label{vector_bundle}
\calE_i\defined \pi_{i,*} \left(\on{Lie}^{(i)}(\hat{\calG})\otimes_{\calO_{D_{i-1}}} ( \calI_{D}^i/\calI_{D}^{i+1})\right),
\end{equation}
where $\on{Lie}^{(i)}(\hat{\calG}):= 1^*(\Omega_{\calG_{i-1}/D_{i-1}}^1)^*$. Note that $\on{Lie}^{(i)}(\hat{\calG})$ is a locally free $\calO_{D_{i-1}}$-module of rank $\dim(\hat{\calG}/\hat{D})$ because $\hat{\calG}\to\hat{D}$ is smooth. Since $\calI_{D}^i/\calI_{D}^{i+1}$ is locally free of rank $\on{rk}(D)$ when considered as an $\calO_S$-module, we see that $\calE$ is a locally free $\calO_S$-module of rank $\on{rk}(D)\cdot \dim(\hat{\calG}/\hat{D})$. Further, the argument above shows that $\bbV(\calE_i)=\ker(G_i\to G_{i-1})$ as $S$-schemes, and a calculation using $\calJ^2=0$ shows that the identification is compatible with the group structure.
\xpf

Here is a list of examples which are of interest to us.

\exam\label{exam--groups}
i) {\em Loop groups.} Let $S=\Spec(k)$ for some field $k$, $X=\bbA^1_k$, and $D=\{0\}$. Then $\hat{D}=\on{Spf}(k\pot{\varpi})$ where $\varpi$ is a local parameter at $0$, and we let $\hat{\calG}=H\x_S\on{Spf}(k\pot{\varpi})$ for a smooth affine finite type $k$-group $H$. In this case, $G=L^+H$ which is the functor on the category of $k$-algebras $R$ given by $L^+H\co R\mapsto H(R\pot{\varpi})$.\smallskip\\
ii) {\em Fusion loop groups.} Let $Y$ be a smooth curve over the field $k$. For any finite index set $I$, let $S:=Y^I$, and $X:=S\times_kY\to S$ the projection. Let $D\subset X$ be the universal degree $\#I$ divisor. Let $H$ be a smooth affine finite type $k$-group, and let $\hat{\calG}:=H\x_k\hat{D}$. Then $G=L^+_{I}H\to Y^I$ is the fusion loop group from \eqref{glob_loop_group} introduced in \cite{BeilinsonDrinfeld:Quantization}. We make the case $\#I=2$ more explicit. Let $\Delta\subset Y^2$ be the diagonal with complement $U=Y^2\backslash \Delta$. Then $L^+_{I}H|_U=(L^+H\x_k L^+H)\x_kU$ whereas $L^+_{I}H|_\Delta=L^+H\x_k\Delta$. We observe that the group $G_0=\on{Res}_{D/Y^2}(H\x_kD)\to Y^2$ is not reductive. Indeed, $G_0|_U=H\x_kH$, but $G_0|_{\Delta}=H\ltimes \on{Lie}^{(1)}(H)$ because $D\to Y^2$ is ramified along $\Delta$ (e.g.,~take $Y=\bbA^1_k$, then $D=\{f=0\}$ for $f=(\varpi-x_1)(\varpi-x_2)$ where $x_1,x_2$ are the coordinates on $S=\bbA^2_k$, and $\varpi$ is the coordinate on $\bbA^1_k$ in $X=\bbA^2_k\x_k\bbA^1_k$). This is in accordance with $\ker(G_1\to G_0)$ being a vector group. \smallskip\\
iii) {\em Non-constant group schemes.} Let $S=\Spec(R)$ for some ring $R$. Let $X=\bbA^1_R$, and let $D=\{f=0\}$ for some polynomial $f\in R[\varpi]$. \smallskip\\
iii.a) Specialize to $f=\varpi$, and $\calG$ a smooth affine $R\pot{\varpi}$-group scheme. Define $\hat{\calG}:=\calG\x_{\Spec(R\pot{\varpi})}\on{Spf}(R\pot{\varpi})$. In the case $R=k$ is a field, the group $G$ is the twisted positive loop group in the sense of \cite{PappasRapoport:LoopGroups}. \smallskip\\
iii.b) Specialize to $R=\bbZ_p$ for $p$ a prime number, $f=\varpi-p$ (resp.~any Eisenstein polynomial). Let $\calG$ be a smooth affine $\bbA^1_{\bbZ_p}$-group scheme, e.g., one of the group schemes constructed in Pappas-Zhu \cite[\S 4]{PappasZhu:Kottwitz} (resp.~Levin \cite[\S 3]{Levin:Weil}). Then $G$ over $\bbZ_p$ is the positive loop group constructed in \cite[(6.4), 6.2.6]{PappasZhu:Kottwitz} (resp.~\cite[Prop 4.1.4 ff]{Levin:Weil}), cf.~also \cite[Ex 3.1 ii)]{HainesRicharz:TestFunctionsWeil}.
\xexam

\bibliographystyle{alpha}
\bibliography{bib}

\end{document}